\documentclass[12pt]{article}
\usepackage[T1]{fontenc}
\usepackage{amsmath,amsthm,stmaryrd}
\usepackage[utopia]{mathdesign}
\usepackage[all]{xy}
\usepackage{hyperref}

\addtolength{\textwidth}{1.0in}
\addtolength{\evensidemargin}{-0.5in}
\addtolength{\oddsidemargin}{-0.5in}
\addtolength{\textheight}{1.0in} \addtolength{\topmargin}{-.5in}

\newcommand{\mr}[1]{\mathrm{#1}}
\newcommand{\mf}[1]{\mathfrak{#1}}
\newcommand{\mc}[1]{\mathcal{#1}}
\newcommand{\mb}[1]{\mathbb{#1}}

\newcommand{\Z}{\mb{Z}}
\newcommand{\Q}{\mb{Q}}
\newcommand{\zp}{\mb{Z}_p}
\newcommand{\qp}{\mb{Q}_p}
\newcommand{\C}{\mb{C}}

\newcommand{\F}{\mb{F}}

\newcommand{\cotimes}[1]{\,\hat{\otimes}_{#1} \,}
\newcommand{\cozp}{\cotimes{\zp}}

\newcommand{\La}{\Lambda}
\newcommand{\Iw}{\mr{Iw}}
\newcommand{\ord}{\mr{ord}}
\newcommand{\qbar}{\overline{\Q}}

\newcommand{\T}{\mathcal{T}}
\newcommand{\tT}{\tilde{\mathcal{T}}}

\newcommand{\invlim}{\varprojlim}

\newcommand{\ps}[1]{\llbracket #1 \rrbracket}

\DeclareMathOperator{\Hom}{Hom}
\DeclareMathOperator{\End}{End}
\DeclareMathOperator{\Gal}{Gal}

\DeclareMathOperator{\SL}{SL}

\DeclareMathOperator{\Cor}{Cor}

\newtheorem{theorem}{Theorem}[subsection]
\newtheorem{proposition}[theorem]{Proposition}
\newtheorem{lemma}[theorem]{Lemma}
\newtheorem{corollary}[theorem]{Corollary}

\theoremstyle{definition}
\newtheorem{definition}[theorem]{Definition}

\theoremstyle{remark}
\newtheorem{remark}[theorem]{Remark}

\newtheorem*{ack}{Acknowledgments}

\newcounter{counti}
\newenvironment{enum}{\begin{list}{\alph{counti}.}{\usecounter{counti}
\setlength{\topsep}{3pt}
\setlength{\labelwidth}{0bp}
\setlength{\leftmargin}{0pt}
\setlength{\itemindent}{23pt}
\setlength{\itemsep}{5pt}
\setlength{\parsep}{0pt}}
}{\end{list}} 

\newcounter{countii}

\renewcommand{\baselinestretch}{1.2}
\numberwithin{equation}{section}

\begin{document}

\title{Modular symbols and the integrality of zeta elements}
\author{Takako Fukaya, Kazuya Kato, Romyar Sharifi}
\date{}
\maketitle

\begin{center}\vspace{-3ex}
Dedicated to Professor Glenn Stevens on the occasion of his 60th birthday 
\end{center}

\begin{abstract}
	We consider modifications of Manin symbols in first homology groups of modular curves with $\zp$-coefficients for an odd prime $p$. We show that these symbols generate homology in primitive eigenspaces for the action of diamond operators, with a certain condition on the eigenspace that can be removed on Eisenstein parts. We apply this to prove the integrality of maps taking compatible systems of Manin symbols to compatible systems of zeta elements. In the work of the first two authors on an Iwasawa-theoretic conjecture of the third author, these maps are constructed with certain bounded denominators. As a consequence, their main result on the conjecture was proven after inverting $p$, and the results of this paper allow one to remove this condition.
\end{abstract}

\section{Introduction}

\subsection{Homology and $(c,d)$-symbols}

Let $p \ge 5$ be a prime, and let $N$ be a positive integer.  We let $X_1(N)$ denote the compact modular curve of level $N$ over $\C$ and let $C_1(N)$ denote its set of cusps.  Consider the relative homology group
$$
	\tilde{H} = H_1(X_1(N),C_1(N),\zp).
$$
Manin showed that $\tilde{H}$ is generated by elements $[u:v]$ attached to pairs $(u,v)$ with 
$u, v \in \Z/N\Z$ and $(u,v) = (1)$ (see Section \ref{homology}).
For integers $c$ and $d$ greater than $1$ and prime to $6N$, we define the
$(c,d)$-symbol for the pair $(u,v)$ by
$$
	{}_{c,d}[u:v] = c^2d^2[u:v] - c^2[u:dv] - d^2[cu:v] + [cu:dv].
$$
Let $\tilde{C}$ denote the $\zp$-span in $\tilde{H}$ of all $(c,d)$-symbols as we vary $c$ and $d$.  We will examine the difference between $\tilde{C}$ and $\tilde{H}$.

Now, let $M$ a positive integer such that $p \nmid M\varphi(M)$, and set $N = M$ or $Mp$.
Let $\theta$ be an even $p$-adic character of $(\Z/N\Z)^{\times}$ of conductor divisible by $M$.
Let $\tilde{H}^{(\theta)}$
denote the $\theta$-eigenspace of $\tilde{H}$ under the action of inverse diamond operators, and similarly for $\tilde{C}$.
Let $\mc{O}_{\theta}$ denote the $\zp$-algebra generated by the values of $\theta$.
Let  $\omega$ denote the Teichm\"uller character on $(\Z/p\Z)^{\times}$, 
and note that we can view both it and $\theta$ as characters on $(\Z/Mp\Z)^{\times}$.  
We prove the following result (and a counterpart for $p = 3$):

\begin{theorem} \label{mainthm}
	If $\theta \neq \omega^2$, then the $\mc{O}_{\theta}$-module $\tilde{H}^{(\theta)}$ is spanned by 
	$\tilde{C}^{(\theta)}$ and 
	the projection of $[1:p]$ to $\tilde{H}^{(\theta)}$.
	Moreover, if $\theta\omega^{-2}|_{(\Z/p\Z)^{\times}} \neq 1$, then $\tilde{H}^{(\theta)} = \tilde{C}^{(\theta)}$.
\end{theorem}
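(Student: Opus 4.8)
The plan is to reduce the statement, via Nakayama's lemma, to a purely combinatorial question about a module given by generators and relations, and then to solve an associated functional equation.

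\emph{Reduction.} Since $p \nmid \varphi(N)$, the idempotent $e_{\theta} \in \mc{O}_{\theta}[(\Z/N\Z)^{\times}]$ cutting out the $\theta$-eigenspace for the inverse diamond operators exists, so $\tilde{H}^{(\theta)}$ and $\tilde{C}^{(\theta)}$ are direct summands of $\tilde{H}\otimes_{\zp}\mc{O}_{\theta}$ and $\tilde{C}\otimes_{\zp}\mc{O}_{\theta}$. The quotient $\tilde{H}^{(\theta)}/(\tilde{C}^{(\theta)}+\mc{O}_{\theta}\cdot[1:p])$ is finitely generated over the local ring $\mc{O}_{\theta}$, so by Nakayama I may pass to the residue field $\F$: writing $\tilde{H}_{\F} = H_1(X_1(N),C_1(N),\F)$, it suffices to show that $(\tilde{H}_{\F}/\tilde{C}_{\F})^{(\theta)}$ is spanned by the image of $[1:p]$ and vanishes when $\theta\omega^{-2}|_{(\Z/p\Z)^{\times}}\neq 1$. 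Here I will use that, since $p\ge 5$, Manin's presentation of $\tilde{H}_{\F}$ involves only the two- and three-term relations, and that $\SL_2(\Z)$ is generated by the matrices underlying those relations; the latter rewrites any Manin symbol, modulo the relations, as an $\F$-combination of symbols $[u:v]$ with $u$ invertible modulo $N$. Applying a diamond operator then gives $e_{\theta}\overline{[u:v]} = \theta(u)\,f(uv)$ for such $u$, where $f(w) := e_{\theta}\overline{[1:w]}$ takes values in $V := (\tilde{H}_{\F}/\tilde{C}_{\F})^{(\theta)}$; so $V$ is spanned by the values of $f$, and everything reduces to understanding $f$.

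\emph{The functional equation.} The relations ${}_{c,d}[u:v]=0$ become, after projection to $V$, the single functional equation
\[
c^2 d^2\,f(w) = c^2\,f(dw) + d^2\,\theta(c)\,f(cw) - \theta(c)\,f(cdw),
\]
for all $w$ and all $c,d$ prime to $6N$, equivalently $(d^2 - D_d)\big(c^2 - \theta(c)D_c\big)f = 0$ with $D_a$ the operator $(D_af)(w) = f(aw)$; the two- and three-term relations translate into symmetries such as $f(w) = -\theta(w)f(-w)$ and $f\big(w(1+w)\big) = \theta(1+w)\big(f(w)-f(1+w)\big)$. The key observation is that in $\F$ one has $c^2 \equiv \omega^2(c)$ for $c$ prime to $p$, so $c^2 - \theta(c)$ fails to vanish for some such $c$ precisely when $\theta\neq\omega^2$; this dichotomy drives everything.

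\emph{Solving it.} Fix $c$ and put $h = \big(c^2 - \theta(c)D_c\big)f$; then $h(dw)\equiv\omega^2(d)h(w)$ for every $d$. When $N=M$, $h(dw)$ depends only on $dw$ modulo $M$ whereas $\omega^2(d)$ does not, forcing $h\equiv 0$; feeding this back and using some $c$ with $c^2\not\equiv\theta(c)$ (automatic here, as $\theta=\omega^2$ cannot occur for $N=M$, $p\ge 5$) gives $f\equiv 0$, i.e.\ $\tilde{H}^{(\theta)}=\tilde{C}^{(\theta)}$. When $N=Mp$, I will stratify $\Z/N\Z$ by the power of $p$ dividing $w$: the functional equation preserves each stratum while the three-term relation couples them. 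On the $p$-divisible stratum the same ambiguity forces $h\equiv 0$, whence $g(w'):=f(pw')$ satisfies $g(cw') = (\theta\omega^{-2})(c)^{-1}g(w')$; this pins $g$ down up to the single value $g(1)=e_{\theta}\overline{[1:p]}$, and taking $c\equiv 1 \pmod M$ in that relation forces $g(1)=0$ unless $\theta\omega^{-2}|_{(\Z/p\Z)^{\times}}=1$. On the prime-to-$p$ stratum $h(w)=\omega^2(w)h(1)$ for units $w$; combining this with $f(w)=-\theta(w)f(-w)$ — and, in the degenerate cases $\theta=1$ or $\theta^2=1$, with the three-term symmetry — confines $f$ on units to a combination of $\omega^2$ and $\omega^2\theta$, which those same symmetries then annihilate. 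Thus $f$ vanishes on units, so $V$ is spanned by the $p$-divisible values, i.e.\ by $g(1)=e_{\theta}\overline{[1:p]}$, proving both claims.

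\emph{Expected obstacle.} The real work is the last step: squeezing out of the functional equation, the two- and three-term relations, and the arithmetic of $\theta$ and $\omega^2$ that nothing beyond $e_{\theta}\overline{[1:p]}$ survives, while correctly disposing of the symbols $[u:v]$ with neither coordinate invertible (through repeated use of the three-term relation) and of the characters with $\theta^2=1$. One should expect the exclusion $\theta=\omega^2$ to be unavoidable: there $c^2\equiv\theta(c)$ for every $c$, the functional equation carries no information, and — this reflecting a weight-two Eisenstein phenomenon — the $(c,d)$-symbols genuinely fail to span, so the argument must, and will, break down exactly there.
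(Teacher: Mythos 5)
Your overall frame (pass to the $\theta$-eigenspace using $p \nmid \varphi(N)$, observe that over the residue field $c^2 \equiv \omega^2(c)$ so the $(c,d)$-modification kills everything except the $\omega^2$- and $\theta\omega^{-2}$-isotypic contributions) is indeed the engine of the paper's proof, which phrases it via eigensymbols $\alpha_{\chi,\psi}^{g,h}$ and the identity ${}_{c,d}\alpha_{\chi,\psi}^{g,h} = (c^2-\chi(c))(d^2-\psi(d))\alpha_{\chi,\psi}^{g,h}$. But your reduction and your endgame both have genuine gaps. First, the claim that every Manin symbol is, modulo the two- and three-term relations, an $\F$-combination of symbols $[u:v]$ with $u$ invertible mod $N$ does not follow from ``$\SL_2(\Z)$ is generated by the matrices underlying those relations''; it is a nontrivial combinatorial assertion (try $[u:v]$ with $u,v$ both non-units and $u+v$ divisible by a third prime of $N$, or the parity issues when $2 \mid N$), and the paper deliberately avoids it by keeping generators $\alpha_{\chi,\psi}^{g,h}$ for \emph{all} coprime divisor pairs $(g,h)$ of $N$ and proving generation statements about those. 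Without this lemma your function $f(w)=e_{\theta}\overline{[1:w]}$ does not capture $V$, and everything after it is moot.

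Second, even granting that reduction, your stratification has only two strata (units and $p$-divisible), which is correct only for $M=1$. For $M>1$ the functional equation $h(dw)=\omega^2(d)h(w)$ involves multiplication by units only, so it preserves the full stratification of $\Z/N\Z$ by $\gcd(w,N)$; in particular the assertion that $g(w')=f(pw')$ is ``pinned down up to the single value $g(1)$'' is false, since $w'$ ranges over all of $\Z/M\Z$ and the values at non-unit $w'$ (the paper's $\beta_{g,h}$ with $g,h$ divisible by primes of $M$) are untouched by that relation. Connecting these intermediate strata to the unit stratum and to $[1:p]$ is exactly where the paper does its real work (the choices $ag+bh=\delta$ with $\delta\in\{1,2\}$, the conductor-$6$ exception, Lemmas \ref{first_reduct} and \ref{elt_relns}), and it uses the hypothesis that the conductor of $\theta\omega^{-2}$ is divisible by $M$ --- a hypothesis your sketch never invokes, which is a warning sign. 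Finally, the decisive step on the unit stratum, ``confines $f$ on units to a combination of $\omega^2$ and $\omega^2\theta$, which those same symmetries then annihilate,'' is asserted rather than proved; in the paper this is Lemmas \ref{M_not_1_case} and \ref{M_1_case}, which require explicit polynomial non-vanishing arguments mod $p$ with separate treatment of $k=p-1$, $p=5$, and $M=12$. So the proposal correctly isolates the easy dichotomy ($\theta\neq\omega^2$ versus $\theta=\omega^2$) and handles the $N=M$ case and the ``taking $c\equiv 1 \bmod M$'' step for $g(1)$, but the spanning lemma, the intermediate strata, and the unit-stratum annihilation --- i.e., the actual content of the theorem --- are missing.
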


Let $I$ denote the Eisenstein ideal of the Hecke algebra $\mf{H}$ in $\End_{\zp}(\tilde{H})$ generated by $T_{\ell} - 1 - \ell\langle \ell \rangle$ for primes $\ell \nmid N$ and $U_{\ell} - 1$ for primes $\ell \mid N$, and let $\mf{m} = I + (p,\langle 1+p \rangle - 1)$.  We exhibit the following strengthening of Theorem \ref{mainthm} on Eisenstein components (i.e., for the localization $\tilde{H}^{(\theta)}_{\mf{m}}$ of $\tilde{H}^{(\theta)}$ at the maximal ideal $\mf{m}\mf{H}^{(\theta)}$ of $\mf{H}^{(\theta)}$).

\begin{theorem} \label{eisthm}
	For $\theta \neq \omega^2$, 
	the $\mc{O}_{\theta}$-modules $\tilde{H}^{(\theta)}_{\mf{m}}$ and $\tilde{C}^{(\theta)}_{\mf{m}}$ are equal.
\end{theorem}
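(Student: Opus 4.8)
The plan is to deduce this from Theorem~\ref{mainthm}. By that theorem and the exactness of localization at $\mf{m}\mf{H}^{(\theta)}$, the quotient $\tilde{H}^{(\theta)}_{\mf{m}}/\tilde{C}^{(\theta)}_{\mf{m}}$ is a cyclic $\mf{H}^{(\theta)}_{\mf{m}}$-module generated by the image of the projection of $[1:p]$, so the theorem is equivalent to the vanishing of that image, i.e.\ to the statement that $[1:p] \in \tilde{C}^{(\theta)}_{\mf{m}}$. I would attack this by first killing the boundary of $[1:p]$ and then its cuspidal part, working throughout with the exact sequence
\[
0 \longrightarrow H_1(X_1(N),\zp) \longrightarrow \tilde{H} \overset{\partial}{\longrightarrow} \tilde{H}_0 \longrightarrow 0,
\]
where $\tilde{H}_0$ denotes the group of degree-zero divisors on $C_1(N)$ with $\zp$-coefficients; this is exact because the reduced homology of a finite set vanishes and $H_0(C_1(N),\zp) \to H_0(X_1(N),\zp) = \zp$ is surjective.

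For the boundary step, one computes $\partial([u:v])$ as the difference of the two cusps $g_{u,v}(\infty)$ and $g_{u,v}(0)$, and then reads off from the defining formula for ${}_{c,d}[u:v]$ that, on the $\theta$-eigenspace and after reorganizing the four resulting cusp classes by the diamond action (which acts through $\theta^{-1}$), $\partial({}_{c,d}[u:v])$ equals $(c^2-1)(d^2-\langle d\rangle)$ times the ``$v$-part'' of $\partial([u:v])$ minus $(d^2-1)(c^2-\langle c\rangle)$ times its ``$u$-part.'' Fixing an auxiliary $d$ with $p\nmid d^2-1$, this shows that $\partial(\tilde{C}^{(\theta)})$ contains $\mf{a}\cdot\partial(\tilde{H}^{(\theta)})$, where $\mf{a}\subseteq\mc{O}_{\theta}$ is the ideal generated by the smoothing factors $c^2-\langle c\rangle$ over all $c$ prime to $6N$. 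The hypothesis $\theta\neq\omega^2$ is precisely what guarantees that some $c^2-\langle c\rangle$ is a unit modulo $\mf{m}$: modulo $\mf{m}$ the diamond operators factor through a quotient of $(\Z/N\Z)^{\times}$ on which $\langle c\rangle\equiv c^2$ for every admissible $c$ exactly when $\theta=\omega^2$. Hence $\partial(\tilde{C}^{(\theta)})_{\mf{m}}=\partial(\tilde{H}^{(\theta)})_{\mf{m}}$; in particular $\partial[1:p]\in\partial(\tilde{C}^{(\theta)}_{\mf{m}})$, so we may write $[1:p]=c_0+h_0$ with $c_0\in\tilde{C}^{(\theta)}_{\mf{m}}$ and $h_0\in H_1(X_1(N),\zp)^{(\theta)}_{\mf{m}}$, and the cyclic module $\tilde{H}^{(\theta)}_{\mf{m}}/\tilde{C}^{(\theta)}_{\mf{m}}$ is generated by the image of the cuspidal class $h_0$.

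It then remains to prove the purely cuspidal inclusion $H_1(X_1(N),\zp)^{(\theta)}_{\mf{m}}\subseteq\tilde{C}^{(\theta)}_{\mf{m}}$, and this is the main obstacle, as it does not follow formally from Theorem~\ref{mainthm} --- the image of $[1:p]$ might a priori be nonzero both in $\tilde{H}^{(\theta)}_{\mf{m}}$ and in its cuspidal subquotient. I would try to obtain it in one of two ways. The structural route is to use that the Eisenstein component $\mf{H}^{(\theta)}_{\mf{m}}$ is Gorenstein and that $H_1(X_1(N),\zp)^{(\theta)}_{\mf{m}}$ is free of rank one over it --- in the spirit of Mazur's and Ohta's work on the Eisenstein ideal --- and then to check that some explicit $(c,d)$-symbol lying in this module is nonzero modulo $\mf{m}$, hence a generator. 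The hands-on route is to produce a Hecke operator $T\notin\mf{m}$ with $T[1:p]\in\tilde{C}^{(\theta)}$, extracted from the action of $U_p$ (when $p\mid N$) or $T_p$ (when $N=M$) on the modular symbol $\{g_{1,p}(0),g_{1,p}(\infty)\}$ together with the congruences $U_p\equiv1$, $\langle1+p\rangle\equiv1$ and $p\equiv0$ modulo $\mf{m}$, the point being that the scalar relating $T[1:p]$ to a $(c,d)$-symbol becomes a unit at $\mf{m}$. Either way, once $h_0\in\tilde{C}^{(\theta)}_{\mf{m}}$ is established we get $[1:p]\in\tilde{C}^{(\theta)}_{\mf{m}}$, and Theorem~\ref{eisthm} follows from Theorem~\ref{mainthm}.
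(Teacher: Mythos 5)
Your reduction via Theorem \ref{mainthm} is fine as far as it goes: after localizing, $\tilde{H}^{(\theta)}_{\mf{m}}/\tilde{C}^{(\theta)}_{\mf{m}}$ is generated by the image of $[1:p]$, so everything hinges on showing that this image vanishes. But that is exactly where your argument stops: the ``purely cuspidal inclusion'' (equivalently, $[1:p] \in \tilde{C}^{(\theta)}_{\mf{m}}$) is not proved, only two candidate strategies are named. The Gorenstein/rank-one-freeness route imports deep external results whose hypotheses you do not verify, and even granting them you would still have to show that some explicit $(c,d)$-symbol is a generator modulo $\mf{m}$, which is itself the nontrivial point and is not sketched. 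The hands-on route via $U_p \equiv 1 \bmod \mf{m}$ is closer to what actually works, but note what scalar it produces: on the relevant eigencomponent the $U_p$-relation (Proposition \ref{U-operator} with trivial $U_p$) gives $(1-\chi^{-1}(p))$ times the problematic symbol, with $\chi = \theta\omega^{-2}$ of conductor $M$, and this is a unit only when $\theta|_{(\Z/M\Z)^{\times}}(p) \neq 1$. Since Theorem \ref{mainthm} already gives $\tilde{H}^{(\theta)} = \tilde{C}^{(\theta)}$ unless $\theta\omega^{-2}$ is trivial on $(\Z/p\Z)^{\times}$, the whole content of Theorem \ref{eisthm} is the case $f_{\theta\omega^{-2}} = M$, and within it the subcase $\theta|_{(\Z/M\Z)^{\times}}(p) = 1$ is untouched by your sketch; there one must instead use the operators $U_{\ell} \equiv 1$ for primes $\ell \mid M$ together with the eigensymbol relations of Lemma \ref{elt_relns}a and the unit-or-vanishing analysis in Theorem \ref{U_theorem}(iii). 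So a genuine idea is missing, not merely details.

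For comparison, the paper does not split off boundaries or cuspidal parts at all: it regards $\tilde{H}_1/I_1\tilde{H}_1$ as an abstract space of modular symbols on which every $U_{\ell}$, $\ell \mid N$, acts trivially, applies Theorem \ref{U_theorem} to conclude generation of its $\theta$-eigenspace by $(c,d)$-symbols, and finishes with Nakayama's lemma over the local ring at $\mf{m}$. Your boundary detour is therefore unnecessary, and as stated it is also shaky: ${}_{c,d}[u:v]$ is not $(c^2-\langle c \rangle)(d^2-\langle d \rangle)[u:v]$, because $[cu:dv]$ is not a diamond translate of $[u:v]$; the clean factorization only appears after projecting to the eigensymbols $\alpha_{\chi,\psi}$, where the two coordinates transform by separate characters $\chi$ and $\psi = \theta\chi^{-1}$. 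Seen that way, the obstruction is concentrated on the components $\chi \in \{\omega^2,\theta\omega^{-2}\}$ inside the $\theta$-eigenspace, and it is not removed merely by choosing $c$ with $c^2-\theta(c)$ a unit, i.e.\ by $\theta \neq \omega^2$ alone; this is precisely why the Hecke-operator input is needed.
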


Theorems \ref{mainthm} and \ref{eisthm} are derived in Section \ref{homology} from the results
of Section \ref{generate}.  We actually derive these results as simple corollaries of variants of arithmetic interest for the $\zp$-submodule $H$ of $\tilde{H}$ generated by those Manin symbols $[u:v]$ with $u, v \neq 0$.
This $H$ is the relative homology group $H_1(X_1(N),C_1^0(N),\zp)$, where $C_1^0(N)$ denotes those cusps
of $X_1(N)$ that do not lie above the zero cusp of $X_0(N)$.  

\begin{remark} \label{mainremark}
	If we let $C$ denote the $\zp$-span of all $(c,d)$-symbols ${}_{c,d}[u:v]$ for nonzero $u$ and $v$, then 
	both of the above results hold with $\tilde{H}$ replaced by $H$ and $\tilde{C}$ replaced by $C$, the second 
	theorem even for $\theta = \omega^2$.
\end{remark}

In Section \ref{homology}, we explain how these results can be used to prove that compatible sequences
of $(c,d)$-symbols generate the ordinary and Eisenstein parts of homology up the tower $X_1(Mp^r)$ of 
modular curves.

\subsection{Application to Iwasawa theory}

Let $\mc{S}$ denote the space of $\La$-adic cusp forms of level $Mp^{\infty}$, where $\La \cong \zp\ps{1+p\zp}$
is the usual Iwasawa algebra.  The group elements in $\La$ act as inverses of diamond operators on $\mc{S}$.  Let $I$ denote the Eisenstein ideal in the Hecke algebra acting on $\End_{\zp}(\mc{S})$, defined as above.  
Let $X$ denote the unramified Iwasawa module over the cyclotomic $\zp$-extension $K$ of $\Q(\mu_N)$, with
$N = Mp$, i.e., the Galois group of the maximal unramified abelian pro-$p$ extension of $K$.  
It has the structure of a $\zp\ps{\Gal(K/\Q)} \cong \La[(\Z/N\Z)^{\times}]$-module under conjugation by lifts of group elements. 

Suppose now that $\theta\omega^{-1}$ has conductor $N$ or that $\theta\omega^{-1}|_{(\Z/M\Z)^{\times}}(p) \neq 1$, and also that $\theta \neq 1, \omega^2$. 
Set 
\begin{eqnarray*}
	P = \mc{S}^{(\theta)}/I\mc{S}^{(\theta)} &\mr{and}& Y = X(1)^{(\theta)}
\end{eqnarray*}
As defined in \cite{sh-Lfn, fk-proof}, there are canonical $\La$-module homomorphisms
\begin{eqnarray*}
	\varpi \colon P \to Y	&\mr{and}& \Upsilon \colon Y \to P,
\end{eqnarray*}
the first being explicitly defined to take a sequence of Manin symbols to a certain sequence of cup products
of cyclotomic $N$-units (see also \cite{busuioc} at level $p$), and the second being defined via the action of Galois on \'etale cohomology groups of modular curves (and constructed out of maps occurring in Hida-theoretic proofs of the main conjecture \cite{wiles,ohta-ord}).  In \cite{sh-Lfn}, it was conjectured that these maps are inverse to each other, up to a unit suspected to be $1$.   This was shown to be true in \cite{fk-proof} upon tensoring
with $\qp$, after multiplication by a power
series $\xi'$ interpolating the derivative of the $p$-adic $L$-function of $\omega^2\theta^{-1}$.  
That is, it was shown that 
$$
	\xi' \Upsilon \circ \varpi = \xi' \colon P \otimes_{\zp} \qp \to P \otimes_{\zp} \qp.
$$
Our generation result allows us to remove the tensor product with $\qp$, which is to say to not have to work
modulo torsion in $P$.

\begin{theorem} \label{noqp}
	One has $\xi' \Upsilon \circ \varpi = \xi'$ as endomorphisms of $P$.
\end{theorem}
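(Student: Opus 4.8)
The plan is to combine the $\qp$-level identity of \cite{fk-proof} with the integrality of $\varpi$ supplied by the generation results of this paper, and then to remove the remaining torsion ambiguity. Recall that in \cite{fk-proof} the homomorphism $\varpi$ is built level by level out of cup products of cyclotomic $N$-units, with a $p$-power denominator bounded uniformly in the level, so that a priori one has only a map into $Y\otimes_{\zp}\qp$, integral precisely on the submodule generated by the compatible systems of $(c,d)$-symbols. The tower forms of Theorems \ref{mainthm} and \ref{eisthm} established in Section \ref{homology} say that these systems span the whole Eisenstein part of the ordinary homology up the tower $X_1(Mp^r)$; here the hypotheses of Theorem \ref{noqp}, notably $\theta\neq 1,\omega^2$, put us in the Eisenstein situation where the sharp statement holds, with no contribution from (the projection of) $[1:p]$. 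By the standard Hida-theoretic description of $P$ as a quotient of a module built from this ordinary homology, the images of the compatible $(c,d)$-symbols then generate $P$ over $\La$, and $\varpi$ --- being integral on those --- is an honest $\La$-module homomorphism $P\to Y$; in particular $\xi'\,\Upsilon\circ\varpi$ is an honest $\La$-endomorphism of $P$.

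It then suffices to show that the $\La$-endomorphisms $\xi'\,\Upsilon\circ\varpi$ and $\xi'\cdot\mr{id}_P$ of $P$ agree, knowing from \cite{fk-proof} that they agree after $\otimes_{\zp}\qp$. Set $\Psi=\xi'\,\Upsilon\circ\varpi-\xi'\cdot\mr{id}_P$; this is $\La$-linear with $\Psi\otimes\qp=0$, so $\Psi(P)$ lies in the $\zp$-torsion submodule $P_{\mr{tor}}$. There are then two ways to conclude. (i) Under the present hypotheses, Ohta's structure theory for the ordinary Eisenstein component ($\mc{S}^{(\theta)}$ free over $\La$, Gorenstein-ness) identifies $P$ with a cyclic module $\La/(\eta)$ for a $p$-adic $L$-function $\eta$, and Ferrero--Washington ($\mu=0$) makes $\eta$ a unit times a distinguished polynomial, so $P$ is $\zp$-torsion-free, $P\hookrightarrow P\otimes_{\zp}\qp$, and $\Psi=0$. (ii) Alternatively, and more self-containedly: since $P$ is generated over $\La$ by the images $z$ of the compatible $(c,d)$-symbols and $\Psi$ is $\La$-linear, it is enough to prove $\Psi(z)=0$ for these $z$; but $\varpi(z)$ is the explicit integral compatible system of cyclotomic-unit cup products, and the proof of the $\xi'$-twisted identity in \cite{fk-proof} invokes $\qp$ only to make sense of $\varpi$ off the $(c,d)$-symbols, so that argument applied to $z$ directly gives $\xi'\Upsilon(\varpi(z))=\xi' z$.

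The new and delicate point --- and the main obstacle --- is the implication used in the first paragraph, from ``compatible $(c,d)$-symbols generate the Eisenstein part of the ordinary homology up the tower'' to ``$\varpi$ is integral on $P$'': this means tracking the explicit Manin-symbol recipe for $\varpi$ through the level-by-level duality pairings and verifying that the uniform denominator bound is exactly cancelled by generation. It is crucially the sharp Eisenstein statement (Theorem \ref{eisthm}, with Remark \ref{mainremark}), and not the weaker Theorem \ref{mainthm} with its $[1:p]$-term, that makes this work --- which is why the hypothesis $\theta\neq\omega^2$ is imposed. A secondary, smaller point is to confirm that whichever finishing step of the second paragraph one adopts is valid under the precise hypotheses of Theorem \ref{noqp}, so that no residual $\zp$-torsion of $P$ is left to obstruct the identity.
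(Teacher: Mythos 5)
There is a genuine gap, and also a mismatch with what this paper actually does. First, a structural point: the paper does not prove Theorem \ref{noqp} here at all. It proves the key new ingredient --- the integrality of the intermediate zeta map $z^{\sharp}$ (Theorem \ref{zsharp}) --- and explicitly defers the derivation of Theorem \ref{noqp} from that integrality to \cite{sh-extn}. Your proposal misplaces where the denominators live: $\varpi$ is, per the paper and \cite{sh-Lfn}, a canonical (integrally defined) $\La$-homomorphism $P \to Y$ given by an explicit formula on Manin symbols; it is not ``a priori only a map into $Y \otimes_{\zp}\qp$.'' The rational object is $z^{\sharp}$, which takes values in $H^1(\Z[\tfrac{1}{N}],\T(1))\otimes_{\La}Q(\La)$ and through which \emph{the proof} of the identity in \cite{fk-proof} factors. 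Consequently the problem is not that $\xi'\Upsilon\circ\varpi - \xi'$ fails to be an integral endomorphism (it is one), but that the only known argument showing it vanishes passes through rationally defined maps.

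This also explains why both of your finishing steps are insufficient. Step (i) cannot be right: if $P$ were known to be $\zp$-torsion-free, the $\qp$-identity of \cite{fk-proof} would restrict along $P\hookrightarrow P\otimes_{\zp}\qp$ and Theorem \ref{noqp} would be trivial, with no need for any generation results --- contradicting the paper's explicit statement that the point is ``to not have to work modulo torsion in $P$.'' The identification $P\cong\La/(\eta)$ you invoke requires a Gorenstein-type hypothesis on the Eisenstein component that is not known (and is deliberately avoided) in this setting. Step (ii) has the right shape --- reduce to $(c,d)$-generators, where everything in sight is integral --- but it omits the essential ingredient: knowing that the values of $z^{\sharp}$ (equivalently, the relevant cup products of Siegel units) on $(c,d)$-symbols are integral does not by itself yield an integral map on all of $\mc{H}$, nor does it upgrade identities proven in rational cohomology groups to identities of integral classes. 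For that one needs the integral cohomology groups to inject into their rational counterparts, i.e., the $\La_{\theta}$- and $\La\cotimes{\zp}\La_{\theta}$-torsion-freeness of $H^1(\Z[\tfrac{1}{N}],\tT^{(\theta)}(1))$ and $H^1_{\Iw}(\Z_{\infty}[\tfrac{1}{N}],\tT^{(\theta)}(1))$ established in Lemmas \ref{torsfree} and \ref{Iwtorsfree}. Your proposal never addresses this, and it is a substantive part of the argument, not a formality.
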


The key point is that there is an intermediate map $z^{\sharp}$, used in defining $\varpi$, and through which both sides of the above identity factor, sending compatible sequences of Manin symbols up the modular tower $X_1(Mp^r)$ to compatible sequences of zeta elements.  As constructed in \cite[Section 3.3]{fk-proof}, the map $z^{\sharp}$ is only defined rationally, which is to say that it takes values in $H^1(\Z[\frac{1}{N}],\T(1)) \otimes_{\La} Q(\La)$, where $\T$ is an inverse limit of ordinary parts of \'etale cohomology groups of the modular curves $X_1(Mp^r)$, and $Q(\La)$ is the quotient field of $\La$.  In Theorem \ref{zsharp}, we show that $z^{\sharp}$ can be defined to take values in $H^1(\Z[\frac{1}{N}],\T(1))$ on the eigenspaces relevant for our application.  
(Here, there is a restriction on the character that can be removed if one takes Eisenstein parts.)
For this purpose, we need that $H^1(\Z[\frac{1}{N}],\T(1))$ injects into its tensor product with $Q(\La)$,
so we show along the way that this cohomology group is $\La$-torsion free.

In the construction of $z^{\sharp}$, the zeta elements arise from norm compatible sequences of Beilinson elments in $H^2(Y_1(Mp^r)_{/\Z[\frac{1}{N}]},\zp(2))$, which is to say cup products of Siegel units $g_u$ on $Y_1(Mp^r)_{/\Z[\frac{1}{N}]}$.  In essence, the map takes a compatible sequence of Manin symbols $[u:v]$ to a sequence of cup products $g_u \cup g_v$.  That this map is well-defined is seen via a regulator computation.  One might hope for a direct construction of such a map, as Weierstrass units provide solutions to the unit equation that can be used to produce relations on Beilinson elements (see for instance \cite{brunault}).  One difficulty with doing this, which is also what leads to $z^{\sharp}$ not being defined integrally, is that the Siegel units only become true units after being raised to a power, or modified using an auxiliary integer $c$ as above.
This appears to necessitate working rationally, or at least restricting to a map which takes sequences of symbols ${}_{c,d}[u:v]$ to sequences of Beilinson elements ${}_c g_u \cup {}_d g_v$.  This causes further problems with performing a direct construction, as it is less clear what relations the latter Beilinson elements should satisfy.  On the other hand, the fact that $(c,d)$-symbols generate homology can be used to show that the map $z^{\sharp}$ is indeed defined integrally on homology itself, the regulator computation being immune to such issues.   

The derivation of Theorem \ref{noqp} from the integrality of $z^{\sharp}$ will be explained in the paper \cite{sh-extn}, which attempts to refine of the method of \cite{fk-proof}.  The survey paper \cite{fks} also contains an exposition of the conjecture of \cite{sh-Lfn} and the method of \cite{fk-proof}.

\begin{ack}
	The work of the first two authors was supported in part by the National Science Foundation under
	Grant No.~1001729.
	The work of the third author was supported in part by the National Science Foundation under Grant Nos.\ 
	1401122 and 1360583, and by a grant from the Simons Foundation (304824, R.S.).  
	He thanks Glenn Stevens, to whom this article is dedicated, for an inspiring discussion 
	on the topic of the integrality of the elements of Kato and Beilinson.
\end{ack}

\section{Generation by symbols} \label{generate}

We provide abstract definitions of spaces of modular symbols in Definitions \ref{modsyms} and \ref{cuspsyms}.  These are meant to model relative homology groups of modular curves, along with  their quotients.  That is, the relative homology groups have well-known presentations as modules over an algebra of diamond operators, with Manin symbols providing the generators.  We define a space of modular symbols to be a module generated by symbols with the same indexing set as the Manin symbols and satisfying at least the same relations.  We then formally define $(c,d)$-symbols in Definition \ref{cdsyms} using our Manin-type generators.  We consider the question of generation by $(c,d)$-symbols on individual primitive eigenspaces for action of the diamond-type operators.  
Our main result, Theorem \ref{cdgenerate}, describes exactly how close the $(c,d)$-symbols come to generating these eigenspaces.

To prove the theorem, we define eigensymbols as sums of Manin symbols that generate the eigenspace, the basic properties of which are studied in Section \ref{eigensymbols}.  One advantage of working with eigenspaces is that the $(c,d)$-versions of these eigensymbols are multiples of the corresponding unmodified eigensymbols, most of these multiples being by units, reducing the problem to a check that a relative few symbols are contained in the span of the $(c,d)$-symbols.  The proof of the result is contained in Section \ref{generation}.

For our applications in Section \ref{applications}, we are particularly interested in the generation by $(c,d)$-symbols of quotients of relative homology groups on which elements of Eisenstein ideals act trivially.  For this purpose, we artificially define Hecke-type operators in Section \ref{Hecke} that operate on the Manin-type symbols in the same manner that (adjoint) Hecke operators act on Manin symbols.  We restrict our definitions to what we actually require, which is to say that we only consider operators $U_{\ell}$ and $T_2$, as they suffice to obtain generation in the remaining primitive eigenspaces in Theorem \ref{U_theorem} and Proposition \ref{T_theorem}.

\subsection{The result} \label{result}

Let $p$ be an odd prime and $M$ a positive integer with $p \nmid M\varphi(M)$.
Set $N = M$ or $Mp$ (and suppose $N > 1$).  Let $\Delta = (\Z/N\Z)^{\times}/\langle-1\rangle$.  For
$a \in (\Z/N\Z)^{\times}$, let $\langle a \rangle$ denote the group element in $\zp[\Delta]$ corresponding to $a$.  

\begin{definition} \label{modsyms}
	A {\em space of level $N$ modular symbols} is a $\zp[\Delta]$-module $\tilde{H}$ spanned by symbols 
	$[u:v]$ for pairs of $u, v \in \Z/N\Z$ with $(u,v) = (1)$ that satisfy relations
	\begin{equation} \label{simplereln}
		[u:v] = [-u:-v] = -[-v:u], 
	\end{equation}
	as well as
	\begin{equation} \label{sumreln}
		[u:v] = [u:u+v] + [u+v:v],
	\end{equation}
	and, for every $a \in (\Z/N\Z)^{\times}$,
	\begin{equation} \label{diamondreln}
		\langle a \rangle[u:v] = [au:av].
	\end{equation}
\end{definition}	

\begin{definition} \label{cuspsyms}
	A {\em space of level $N$ cuspidal-at-zero modular symbols}  is a $\zp[\Delta]$-module $H$ spanned
	by symbols $[u:v]$ for pairs of nonzero $u, v \in \Z/N\Z$ with $(u,v) = (1)$ that satisfy relations
	\eqref{simplereln}, \eqref{sumreln} for $u \neq -v$, and \eqref{diamondreln}.
\end{definition}

Every space of level $N$ modular symbols has a subspace of a cuspidal-at-zero symbols.  Moreover, every
space of level $N$ cuspidal-at-zero symbols is also, rather artificially, a space of level $N$ modular symbols in which $[u:0] = 0$ for every nonzero $u \in (\Z/N\Z)^{\times}$, as the reader can easily check.  Let us fix a space $H$ of level $N$
modular symbols (which may or may not be cuspidal at zero in the sense just described).

\begin{definition} \label{cdsyms}
	For integers $c$ and $d$ greater than $1$ and prime to $6N$, the {\em $(c,d)$-symbol} 
	for the pair $(u,v)$ is defined as
	$$
		{}_{c,d}[u:v] = c^2d^2[u:v] - c^2[u:dv] - d^2[cu:v] + [cu:dv].
	$$
\end{definition}
	
Let $C$ denote the $\zp$-span of all $(c,d)$-symbols as we vary $c$ and $d$, which is a $\zp[\Delta]$-submodule of $H$.  Our goal is to compare $C$ with $H$ on primitive parts for the action of $(\Z/M\Z)^{\times}$.  To make this comparison, we work with individual $\Delta$-eigenspaces for $p$-adic characters.

So, fix a $p$-adic character $\theta \colon \Delta \to \C_p^{\times}$.  As before, let $\mc{O}_{\theta}$ denote the ring generated over $\zp$ by the values of $\theta$, and now let $\mc{O} = \zp[\mu_{\varphi(N)}]$, which contains $\mc{O}_{\theta}$.
Set $X_N = \Hom((\Z/N\Z)^{\times},\mc{O}^{\times})$.  Given an element of $X_N$ denoted by $\chi$, we will consistently use $\psi$ to denote $\theta\chi^{-1}$.

We have the following two types of eigenspaces.  First, for a $\zp[\Delta]$-module $D$, let 
$$
	D^{(\theta)} = D \otimes_{\zp[\Delta]} \mc{O}_{\theta},
$$ 
where $\zp[\Delta] \to \mc{O}_{\theta}$ is induced by $\theta$.
Second, let $e_{\theta}$ denote the idempotent
$$
	e_{\theta} = \frac{1}{\varphi(N)} \sum_{a \in (\Z/N\Z)^{\times}} \theta^{-1}(a) \langle a \rangle \in 
	\mc{O}_{\theta}[(\Z/N\Z)^{\times}].
$$
We then set 
$$
	D^{\theta} = e_{\theta}(D \otimes_{\zp} \mc{O}).
$$ 
Note that $D^{\theta} \cong D^{(\theta)} \otimes_{\mc{O}_{\theta}} \mc{O}$.  The advantage of working with eigenspaces $D^{\theta}$ is in the following definition of eigensymbols, the properties of which will be studied in the next subsection.

\begin{definition}
	For relatively prime divisors $g$ and $h$ of $N$ and $\chi \in X_N$, the {\em eigensymbol} 
	$\alpha_{\chi,\psi}^{g,h} \in H^{\theta}$ is defined as
	$$
		\alpha_{\chi,\psi}^{g,h}
		= \frac{1}{\varphi(N)^2} \sum_{a, b \in (\Z/N\Z)^{\times}} \chi^{-1}(a)\psi^{-1}(b)[ga:hb].
	$$
	The {\em $(c,d)$-eigensymbol} ${}_{c,d}\alpha_{\chi,\psi}^{g,h}$ is defined by the same formula after
	replacing $[ga:hb]$ by ${}_{c,d}[ga:hb]$.  We set $\alpha_{\chi,\psi} = \alpha_{\chi,\psi}^{1,1}$
	and ${}_{c,d} \alpha_{\chi,\psi} = {}_{c,d} \alpha_{\chi,\psi}^{1,1}$.
\end{definition}	

\begin{remark}
	By convention, if $H$ is cuspidal at zero, then the symbols $\alpha_{\chi,\psi}^{N,1}$ 
	with $\chi \in X_N$ are all zero.
\end{remark}

Let $\omega \colon (\Z/Mp\Z)^{\times} \to \zp^{\times}$ denote the Teichm\"uller character 
(factoring through $(\Z/p\Z)^{\times}$).  The following is the main result of this section.

\begin{theorem} \label{cdgenerate} \
	\begin{enumerate}
		\item[a.] If $p \ge 5$ and $\theta\omega^{-2}$ has conductor $N$, then $H^{\theta} = C^{\theta}$.
		\item[b.] If $N = Mp$ with $p \ge 5$, and $\theta\omega^{-2}$ has conductor $M > 1$, then 
		$H^{\theta} = C^{\theta} + \mc{O}\alpha_{\omega^2,\theta\omega^{-2}}^{1,p}$.
		\item[c.] If $N = Mp$ with $p = 3$, and $\theta$ has conductor $N$, then 
		$H^{\theta} = C^{\theta} + \mc{O}\alpha_{1,\theta} + \mc{O}\alpha_{1,\theta}^{N,1}$.
	\end{enumerate}
\end{theorem}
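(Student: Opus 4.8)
The plan is to reduce everything to a finite, explicit check on eigensymbols using the machinery set up in Section \ref{result}. Since $H^{\theta} = e_\theta(H \otimes_{\zp}\mc{O})$ and $H$ is generated as a $\zp[\Delta]$-module by the Manin symbols $[u:v]$, the eigenspace $H^{\theta}$ is spanned over $\mc{O}$ by the eigensymbols $\alpha_{\chi,\psi}^{g,h}$ as $\chi$ ranges over $X_N$ and $g,h$ over coprime divisors of $N$; indeed applying $e_\theta$ to $\langle a\rangle[u:v]$ and summing produces exactly these symbols after writing $u = ga'$, $v = hb'$ with $g = \gcd(u,N)$, $h = \gcd(v,N)$. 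So the content of the theorem is an assertion about which of these finitely many eigensymbols lie in $C^{\theta}$. The first step, then, is to invoke the relations \eqref{simplereln}, \eqref{sumreln}, \eqref{diamondreln} at the level of eigensymbols: the diamond relation forces $\alpha_{\chi,\psi}^{g,h}$ to vanish unless $\chi\psi$ agrees with $\theta$ in the appropriate sense, \eqref{simplereln} gives $\alpha_{\chi,\psi}^{g,h} = \chi(-1)\alpha_{\psi,\chi}^{h,g}$ up to sign and (using $\theta$ even) relates the two orderings, and \eqref{sumreln} expresses $\alpha_{\chi,\psi}^{g,h}$ for non-coprime or "overlapping" $g,h$ in terms of those with $gh \mid N$. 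This cuts the spanning set down to the eigensymbols $\alpha_{\chi,\psi}^{g,h}$ with $gh\mid N$, $\chi\psi = \theta$ (as characters mod $N$), which for $N = M$ or $Mp$ is a short list.

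The second step is the key computational input, and I expect it to be the main obstacle: one must compute the $(c,d)$-eigensymbol ${}_{c,d}\alpha_{\chi,\psi}^{g,h}$ in terms of $\alpha_{\chi,\psi}^{g,h}$. Expanding the definition $${}_{c,d}[ga:hb] = c^2d^2[ga:hb] - c^2[ga:dhb] - d^2[cga:hb] + [cga:dhb]$$ and summing against $\chi^{-1}(a)\psi^{-1}(b)$, each of the four terms is, after a change of variables $a\mapsto c^{-1}a$ or $b\mapsto d^{-1}b$ (legitimate since $c,d$ are prime to $N$), a scalar multiple of $\alpha_{\chi,\psi}^{g,h}$ — except that when $g$ or $h$ fails to be prime to $c$ or $d$ the substitution $a\mapsto c^{-1}a$ must be handled via \eqref{diamondreln} separately on the relevant component. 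The upshot should be an identity of the form $${}_{c,d}\alpha_{\chi,\psi}^{g,h} = (c^2 - \chi_g(c)\psi_h?(c)\cdots)(d^2-\cdots)\,\alpha_{\chi,\psi}^{g,h},$$ i.e. the $(c,d)$-eigensymbol is a scalar multiple of the plain eigensymbol, the scalar being a product of two factors each of shape $c^2 - (\text{value of a character at }c)$. The crucial dichotomy is then: for which $(\chi,\psi,g,h)$ can one choose $c,d$ prime to $6N$ making this scalar a unit in $\mc{O}$? The scalar factor $c^2 - \eta(c)$ is a unit for suitable $c$ precisely when the character $\eta$ is not $\omega^{\pm 2}$ on $(\Z/p\Z)^\times$ — this is the familiar Kubota–Leopoldt/Mazur computation of when $c^2-\langle c\rangle^2$ kills torsion. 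Thus all eigensymbols except those with $\chi = \omega^2$, $\psi\omega^{-2}$ of $p$-power order — equivalently, in the notation of the theorem, except the "$p$-component $\omega^2$" piece — already lie in $C^{\theta}$ after multiplying by a unit.

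The final step is bookkeeping: match the exceptional eigensymbols against the three cases. In case (a), $\theta\omega^{-2}$ has conductor $N$; if $N = M$ there is no $p$-part and the unit argument covers everything, so $H^\theta = C^\theta$, and if $N = Mp$ the condition forces $\theta\omega^{-2}$ to be nontrivial on $(\Z/p\Z)^\times$, which rules out $\chi=\omega^2$ with $\psi$ of $p$-power order among the relevant pairs, so again every eigensymbol is a unit times a $(c,d)$-eigensymbol. In case (b), $\theta\omega^{-2}$ has conductor $M$, so it is trivial on $(\Z/p\Z)^\times$, and the single surviving exceptional eigensymbol is $\alpha_{\omega^2,\theta\omega^{-2}}^{1,p}$ (the superscript $p$ on the second slot recording that $v$ must sit over the $p$-part, since $[u:0]$-type degeneracies are what \eqref{sumreln} leaves over); hence $H^\theta = C^\theta + \mc{O}\alpha_{\omega^2,\theta\omega^{-2}}^{1,p}$. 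In case (c), $p=3$ and $\theta$ has conductor $N$: now $\omega^2 = \omega^{-1}$ has order $2$, both the slot-ordering symmetry and the presence of the level-$N$ symbols $\alpha_{1,\theta}^{N,1}$ (nonzero since $H$ need not be cuspidal at zero) contribute, and one checks that exactly $\alpha_{1,\theta}$ and $\alpha_{1,\theta}^{N,1}$ escape the unit argument, giving the stated formula. In each case one should also verify that the listed generators are genuinely needed — i.e. not already in $C^\theta$ — which follows from the fact that the scalar in the $(c,d)$-identity is a non-unit (indeed divisible by $p$ for every admissible $c,d$) on precisely those pieces; this will be handled in Section \ref{generation} together with the eigensymbol properties from Section \ref{eigensymbols}.
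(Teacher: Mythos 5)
Your reduction to eigensymbols (Corollary \ref{eigspan}) and the identity ${}_{c,d}\alpha_{\chi,\psi}^{g,h} = (c^2-\chi(c))(d^2-\psi(d))\alpha_{\chi,\psi}^{g,h}$ match the paper's setup, and the unit argument correctly disposes of all $\alpha_{\chi,\psi}^{g,h}$ with $\chi \notin \{\omega^2,\theta\omega^{-2}\}$. But your final step contains a genuine error that hides the bulk of the proof. In case (a) with $N = Mp$, you claim the conductor condition ``rules out $\chi = \omega^2$ among the relevant pairs.'' It does not: the pair $(\chi,\psi) = (\omega^2,\theta\omega^{-2})$ is always present (note $\psi$ here has order prime to $p$ — the issue is never ``$\psi$ of $p$-power order'' but simply whether $\chi$ or $\psi$ equals $\omega^2$), and since $c^2 \equiv \omega^2(c) \bmod \mf{m}$ for every $c$, the factor $c^2 - \omega^2(c)$ is a non-unit for \emph{all} admissible $c$. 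So the eigensymbols $\beta_{g,h} = \alpha_{\omega^2,\theta\omega^{-2}}^{g,h}$ are never reached by the unit argument, in any of the three cases, and showing they nonetheless lie in $C^{\theta}$ (or in $C^{\theta}$ plus the listed exceptional generators) is the entire content of the paper's Lemmas \ref{zero_terms} through \ref{M_1_case}.

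That missing argument is not routine bookkeeping. The paper projects the two-term relation $[ga:hb] = [ga:\delta] + [\delta:hb]$ (for $ag+bh=\delta$) to the $\theta$-eigenspace via Lemma \ref{multconst}, obtaining the congruence \eqref{gensum6} relating $\beta_{g,h}$, $\beta_{h,g}$, $\beta_{g,\delta}$, $\beta_{\delta,h}$ modulo the span $A$ of the non-exceptional eigensymbols. One then needs: (i) vanishing statements from the conductor constraint (Lemma \ref{eigrestrict}/\ref{zero_terms}); (ii) a careful choice of $a,b$ with $\theta\omega^{-2}(b) \neq 1$, which fails precisely in an exceptional configuration involving the prime $6$ (Lemma \ref{stupidlemma}b) and forces separate treatment of $M = 12$; and (iii) for $\beta = \beta_{1,1}$ itself, explicit non-vanishing of polynomials such as $F(a) = (a^2-(a+1)^2)(1-(a+1)^{k-2}) - (1-(a+1)^2)(a^{k-2}-(a+1)^{k-2})$ over $\F_p - \{0,-1\}$, with the degenerate case $k = p-1$ handled by hand. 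None of this appears in your proposal; the assertion that ``one checks that exactly $\alpha_{1,\theta}$ and $\alpha_{1,\theta}^{N,1}$ escape'' in case (c), and the analogous claims in (a) and (b), are precisely what must be proved. (A minor additional slip: the unit criterion is $\eta \neq \omega^2$, not $\eta \neq \omega^{\pm 2}$; for $p \geq 7$ one can certainly choose $c$ with $\omega^{-2}(c) \not\equiv c^2$.)
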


\subsection{Eigensymbols} \label{eigensymbols}

In this subsection, we study the eigensymbols $\alpha_{\chi,\psi}^{g,h}$, starting with the following
remarks.

\begin{remark}
	We have $\alpha_{\chi,\psi}^{g,h} = -\chi(-1)\alpha_{\psi,\chi}^{h,g}$.
\end{remark}

\begin{remark}
	We have
	$$
		\alpha_{\chi,\psi}^{g,h} = e_{\theta} \cdot 
		\frac{1}{\varphi(N)}\sum_{a \in (\Z/N\Z)^{\times}} \chi^{-1}(a)[ga:h].
	$$
\end{remark}

For $\chi \in X_N$, we let $f_{\chi}$ denote its conductor.
We will also view such a $\chi$ as a primitive Dirichlet character of modulus $f_{\chi}$.
Note that if $2$ divides $f_{\chi}$, then so does $4$.  For a divisor $D$ of $N$, set $Q_D = \frac{N}{D}$.

\begin{lemma} \label{eigrestrict}
	The element $\alpha_{\chi,\psi}^{g,h}$ is zero unless $f_{\chi}$ divides $Q_g$ and $f_{\psi}$ divides
	$Q_h$.
\end{lemma}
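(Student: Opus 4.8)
The plan is to exploit orthogonality of characters, using that the symbol $[ga:hb]$ depends on $a$ and $b$ only through their residues modulo $Q_g$ and $Q_h$ respectively. So, first I would record the routine observations that make the collapse work. Since $g$ and $h$ are relatively prime divisors of $N$ and $a, b \in (\Z/N\Z)^{\times}$, we have $\gcd(ga,N) = g$ and $\gcd(hb,N) = h$, hence $\gcd(ga,hb,N) = \gcd(g,h) = 1$, i.e.\ $(ga,hb) = (1)$; so every symbol $[ga:hb]$ occurring in the defining sum for $\alpha_{\chi,\psi}^{g,h}$ is a legitimate symbol in $H$. Next, let $V_g$ denote the kernel of the reduction map $(\Z/N\Z)^{\times} \to (\Z/Q_g\Z)^{\times}$, which makes sense as $Q_g = N/g$ divides $N$; if $u \in V_g$ then $au \equiv a \pmod{Q_g}$, whence $gau \equiv ga \pmod{N}$, so $[gau:hb] = [ga:hb]$. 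Likewise $[ga:hbw] = [ga:hb]$ for $w$ in the analogous subgroup $V_h$. Thus the symbol $[ga:hb]$ is constant on each coset $aV_g \times bV_h$.

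The main step is then to partition the double sum along these cosets. Writing $\bar a$ and $\bar b$ for representatives of $(\Z/N\Z)^{\times}/V_g$ and $(\Z/N\Z)^{\times}/V_h$ and using multiplicativity of $\chi^{-1}$ and $\psi^{-1}$, one gets
$$
	\varphi(N)^2 \, \alpha_{\chi,\psi}^{g,h}
	= \Big( \sum_{u \in V_g} \chi^{-1}(u) \Big) \Big( \sum_{w \in V_h} \psi^{-1}(w) \Big)
	\sum_{\bar a, \bar b} \chi^{-1}(a) \psi^{-1}(b) \, [ga:hb].
$$
Now $\chi$ is trivial on $V_g$ exactly when $\chi$ factors through $(\Z/Q_g\Z)^{\times}$, that is, when $f_{\chi} \mid Q_g$; similarly $\psi$ is trivial on $V_h$ precisely when $f_{\psi} \mid Q_h$. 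Hence if $f_{\chi} \nmid Q_g$ the first character sum vanishes, and if $f_{\psi} \nmid Q_h$ the second vanishes; in either case $\alpha_{\chi,\psi}^{g,h} = 0$, which is the assertion.

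I expect no genuine obstacle: this is a standard orthogonality collapse. The only points requiring a moment's care are the coprimality check guaranteeing the symbols are defined, the harmless bookkeeping in regrouping the sum over cosets, and the invocation of the elementary fact that a Dirichlet character factors through a modulus $m \mid N$ if and only if its conductor divides $m$ — this is exactly what translates "$\chi$ trivial on $V_g$" into "$f_{\chi} \mid Q_g$".
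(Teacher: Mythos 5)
Your proof is correct and rests on the same key observation as the paper's: the symbol $[ga:hb]$ depends on $a$ only modulo $Q_g$ and on $b$ only modulo $Q_h$, so a character nontrivial on the kernel of reduction forces vanishing. The paper phrases this as a substitution argument (the twisted sum equals $\chi(u)\psi(v)\alpha_{\chi,\psi}^{g,h}$ yet depends only on $u \bmod Q_g$ and $v \bmod Q_h$), while you regroup the sum over cosets and invoke orthogonality; this is a cosmetic reformulation of the same argument, and both use that $\varphi(N)$ (and $\chi(u)-1$ for nontrivial prime-to-$p$ roots of unity) is invertible in $\mc{O}$.
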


\begin{proof}	
	For $u, v \in (\Z/N\Z)^{\times}$, we have
    	$$
    		\frac{1}{\varphi(N)^2} \sum_{a, b \in (\Z/N\Z)^{\times}} \chi^{-1}(a)\psi^{-1}(b)[gau:hbv]
    		= \chi(u)\psi(v)\alpha_{\chi,\psi}^{g,h}.
    	$$
    	The right-hand side depends exactly (i.e., modulo no smaller integers) on the image of 
	$u$ in $(\Z/f_{\chi}\Z)^{\times}$ and the image of $v$ in $(\Z/f_{\psi}\Z)^{\times}$.
	Since the left-hand side depends only on $u$ modulo $Q_g$ and $v$ modulo $Q_h$, 
	the eigensymbol $\alpha_{\chi,\psi}^{g,h}$ can only be nonzero if $f_{\chi} \mid Q_g$ and $f_{\psi} \mid Q_h$.
\end{proof}

\begin{lemma} \label{multconst}
	If $u,v \in (\Z/N\Z)^{\times}$ and $g$ and $h$ are relatively prime divisors of $N$, then we have
	$$
		e_{\theta}[gu:hv] 
		= \sum_{\chi \in X_N} \chi(u)\psi(v)\alpha_{\chi,\psi}^{g,h}.
	$$
\end{lemma}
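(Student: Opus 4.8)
The plan is to recognize the right-hand side as the expansion of $e_\theta[gu:hv]$ using orthogonality of characters of the finite abelian group $(\Z/N\Z)^\times$. First I would recall that $e_\theta = \frac{1}{\varphi(N)}\sum_{a\in(\Z/N\Z)^\times}\theta^{-1}(a)\langle a\rangle$, so that by the diamond relation \eqref{diamondreln} we have
$$
	e_\theta[gu:hv] = \frac{1}{\varphi(N)}\sum_{a\in(\Z/N\Z)^\times}\theta^{-1}(a)[gau:hav].
$$
Next I would substitute the definition of each eigensymbol into the claimed sum. By the preceding displayed identity in the proof of Lemma \ref{eigrestrict} (with the roles of the summation variables renamed), we have
$$
	\chi(u)\psi(v)\alpha_{\chi,\psi}^{g,h} = \frac{1}{\varphi(N)^2}\sum_{a,b\in(\Z/N\Z)^\times}\chi^{-1}(a)\psi^{-1}(b)[gau:hbv],
$$
where $\psi = \theta\chi^{-1}$. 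Summing over all $\chi \in X_N$ and interchanging the order of summation, the coefficient of $[gau:hbv]$ becomes
$$
	\frac{1}{\varphi(N)^2}\sum_{\chi\in X_N}\chi^{-1}(a)\theta^{-1}(b)\chi(b) = \frac{\theta^{-1}(b)}{\varphi(N)^2}\sum_{\chi\in X_N}\chi(b a^{-1}).
$$
By orthogonality of characters, $\sum_{\chi\in X_N}\chi(ba^{-1})$ equals $\varphi(N)$ if $a \equiv b \pmod N$ and $0$ otherwise; note $X_N = \Hom((\Z/N\Z)^\times,\mc{O}^\times)$ is the full character group since $\mc{O}$ contains the $\varphi(N)$-th roots of unity. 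Hence the double sum collapses to $\frac{1}{\varphi(N)}\sum_{a\in(\Z/N\Z)^\times}\theta^{-1}(a)[gau:hav]$, which is exactly $e_\theta[gu:hv]$ by the first display.

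I expect no genuine obstacle here; this is essentially a bookkeeping lemma. The one point that warrants a word of care is that the identity is being asserted in $H^\theta = e_\theta(H\otimes_{\zp}\mc{O})$, so one should note that $e_\theta$ acts as the identity on this space, making the left-hand side well-defined as written and consistent with the fact that each $\alpha_{\chi,\psi}^{g,h}$ already lies in $H^\theta$ (only the terms with $\theta\chi^{-1} = \psi$ survive, which is automatic given how $\psi$ is defined from $\chi$). A second minor point is the convention that $X_N$ may contain characters $\chi$ for which $\alpha_{\chi,\psi}^{g,h} = 0$ by Lemma \ref{eigrestrict}; these simply contribute zero to the sum and cause no trouble.
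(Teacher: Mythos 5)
Your proposal is correct and is essentially the paper's argument: both expand the right-hand side using the transformation property of the eigensymbols and collapse the sum over $\chi$ via orthogonality of characters of $(\Z/N\Z)^{\times}$ (the paper phrases this as $\sum_{\chi\in X_N}\chi(c^{-1})=0$ unless $c=1$, after a change of variables, together with $e_{\theta}\langle v\rangle=\theta(v)$). The minor bookkeeping points you flag (that $X_N$ is the full character group since $p\nmid\varphi(N)$, and that the identity lives in $H^{\theta}$) are handled the same way implicitly in the paper.
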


\begin{proof} 
	We have the following equalities:
	\begin{align*}
		\sum_{\chi \in X_N}\chi(u)\psi(v)\alpha_{\chi,\psi}^{g,h} &=
		\frac{1}{\varphi(N)}e_{\theta}  \sum_{\chi \in X_N}\chi(uv^{-1})\theta(v)
		\sum_{c \in (\Z/N\Z)^{\times}} \chi(c^{-1})[cg:h]
		\\&=
		\frac{1}{\varphi(N)}e_{\theta}  \sum_{\chi \in X_N} \sum_{c \in (\Z/N\Z)^{\times}} 
		\theta(v) \chi(c^{-1})[cguv^{-1}:h]\\
		&=  \frac{1}{\varphi(N)} e_{\theta}
		\theta(v)\langle v \rangle^{-1} 
		\sum_{c \in (\Z/N\Z)^{\times}} \sum_{\chi \in X_N} \chi(c^{-1})[cgu:hv]\\
		&= \frac{1}{\varphi(N)} e_{\theta} \varphi(N) [gu:hv] = e_{\theta}[gu:hv],
	\end{align*}
	where in the second to last step we have used that $e_{\theta}\langle v \rangle = \theta(v)$
	and that $\sum_{\chi \in X_N} \chi(c^{-1}) = 0$ unless $c = 1$.
\end{proof}

\begin{corollary} \label{eigspan}
	The elements $\alpha_{\chi,\psi}^{g,h}$ for $\chi \in X_N$ and relatively prime divisors 
	$g$ and $h$ of $N$ generate $H^{\theta}$ as an $\mc{O}$-module.  
\end{corollary}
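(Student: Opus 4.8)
The plan is to deduce the corollary directly from Lemma \ref{multconst}. Since $H$ is spanned over $\zp[\Delta]$ by the symbols $[u:v]$, and the idempotent $e_\theta$ pushes the $\Delta$-action into $\mc{O}_\theta \subseteq \mc{O}$, the module $H^\theta = e_\theta(H \otimes_{\zp} \mc{O})$ is spanned over $\mc{O}$ by the elements $e_\theta[u:v]$, where $(u,v)$ ranges over the pairs in $\Z/N\Z$ with $(u,v) = (1)$. Each eigensymbol $\alpha_{\chi,\psi}^{g,h}$ lies in $H^\theta$ (for instance by the second Remark above), so it will be enough to write every $e_\theta[u:v]$ as an $\mc{O}$-linear combination of eigensymbols.

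First I would rewrite an arbitrary symbol $[u:v]$ in the shape $[gu':hv']$ with $g, h$ relatively prime divisors of $N$ and $u', v' \in (\Z/N\Z)^\times$. To do this, set $g = \gcd(u,N)$ and $h = \gcd(v,N)$, with the convention $\gcd(0,N) = N$; because $(u,v) = (1)$, no prime dividing $N$ can divide both $u$ and $v$, hence $\gcd(g,h) = 1$. Writing $u = gw$ with $w$ coprime to $Q_g = N/g$, I would use the Chinese remainder theorem to pick an integer $u'$ with $u' \equiv w \pmod{Q_g}$ and $u' \equiv 1 \pmod{\ell}$ for each prime $\ell \mid N$ not dividing $Q_g$; then $u'$ is a unit modulo $N$ with $gu' \equiv u \pmod{N}$. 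Choosing $v'$ in the same way from $v$, and using that the symbols depend only on residues modulo $N$, I get $[u:v] = [gu':hv']$ in $H$.

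Then I would apply Lemma \ref{multconst} to $u', v' \in (\Z/N\Z)^\times$ and the relatively prime divisors $g, h$ of $N$, obtaining
$$
	e_\theta[u:v] = e_\theta[gu':hv'] = \sum_{\chi \in X_N} \chi(u')\,\psi(v')\,\alpha_{\chi,\psi}^{g,h},
$$
which displays $e_\theta[u:v]$ as an $\mc{O}$-combination of eigensymbols and finishes the argument.

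The only step needing any care is the lifting in the second paragraph --- realizing a prescribed multiple of $g$ as $g$ times a unit modulo $N$ --- but this is a routine Chinese remainder theorem manipulation, made automatic by the fact that $w$ is coprime to $Q_g$; everything else is purely formal given Lemma \ref{multconst} and the description of the $\mc{O}$-generators of $H^\theta$.
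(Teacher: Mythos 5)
Your proof is correct and follows the same route the paper intends: the corollary is stated as an immediate consequence of Lemma \ref{multconst}, and your only added content is the routine step of writing an arbitrary symbol $[u:v]$ as $[gu':hv']$ with $g = \gcd(u,N)$, $h = \gcd(v,N)$ and $u',v'$ units, which you carry out correctly via the Chinese remainder theorem.
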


Similarly, $C^{\theta}$ is spanned by the ${}_{c,d}\alpha_{\chi,\psi}^{g,h}$ as $c$, $d$, 
$\chi$, $g$, and $h$ vary over the relevant sets.   Moreover, note that
\begin{equation} \label{relatesym}
	{}_{c,d}\alpha_{\chi,\psi}^{g,h} = (c^2-\chi(c))(d^2-\psi(d))\alpha_{\chi,\psi}^{g,h}.
\end{equation}

\begin{remark} \label{eigokay}
	For $\chi \in X_N$ with $\chi \neq \omega^2$ and $\psi \neq \omega^2$, 
	we can choose $c, d$ prime to $6N$ such that $\chi(c) \not\equiv c^2 \bmod p$ and 
	$\psi(d) \not\equiv d^2 \bmod p$.  By \eqref{relatesym}, it follows that $\alpha_{\chi,\psi}^{g,h}$ is a unit multiple 
	of the resulting ${}_{c,d}\alpha_{\chi,\psi}^{g,h}$.  
\end{remark}

\subsection{Proof of generation} \label{generation}

In this subsection, we prove Theorem \ref{cdgenerate}.
Let us outline our strategy.
First, placing ourselves in the setting of the theorem for an odd prime $p$, 
we suppose in this subsection that $f = f_{\theta\omega^{-2}}$ is divisible by $M$ (so is $M$ or $Mp$), 
which forces $M$ to be odd or divisible by $4$.  Moreover, if $p = 3$, we suppose that $f = Mp$.
Either $2 \nmid f$ or $4 \mid f$, and in that $\theta$ is even, $f \ne 3,4$.
For brevity, for relatively prime divisors $g$ and $h$ of $N$, let us set 
$$
	\beta_{g,h} = \alpha_{\omega^2,\theta\omega^{-2}}^{g,h}
$$
and $\beta = \beta_{1,1} = \alpha_{\omega^2,\theta\omega^{-2}}$.  

By Remark \ref{eigokay}, we have that $\alpha_{\chi,\psi}^{g,h} \in C^{\theta}$
if $\chi \in X_N - \{ \omega^2, \theta\omega^{-2} \}$.
Let us use $A \subseteq C^{\theta}$ to denote the $\mc{O}$-span of all symbols
$\alpha_{\chi,\psi}^{g,h}$ with $\chi \notin \{ \omega^2,\theta\omega^{-2} \}$ and $g$ and $h$ relatively prime divisors  of $N$.  
By Corollary \ref{eigspan}, in order to prove Theorem \ref{cdgenerate}, 
we need only see that each $\alpha_{\chi,\psi}^{g,h}$ with $\chi \in \{ \omega^2,\theta\omega^{-2} \}$ lies in $A$ in case (i),
in $A + \mc{O}\beta_{1,p}$ in case (ii), and in 
$A + \mc{O}\beta + \mc{O}\beta_{N,1}$ in case (iii).

If $N = M$ and $p \ge 5$, we have $\chi \notin \{ \omega^2, \theta\omega^{-2} \}$ automatically since $\omega^2 \notin X_N$, so there is nothing more to show.  (It is not much harder to show that, for $p \ge 5$, Theorem \ref{cdgenerate}a holds for $N = M$ without the assumption $p \nmid \varphi(M)$ and without restriction on the conductor of $\theta$.)
So, suppose from now on that $N = Mp$.  The proof of Theorem \ref{cdgenerate} is contained in the lemmas that
follow.

\begin{lemma} \label{zero_terms}
	Let $g$ and $h$ be relatively prime divisors of $N$.
	If $p \nmid g$, then we have $\beta_{g,h} = 0$ unless $h = 1$, or $f = M$ and $h = p$. 
	If $p \ge 5$ and $p \mid g$, then $\beta_{g,h} = 0$.
\end{lemma}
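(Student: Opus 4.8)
The plan is to derive both assertions directly from the vanishing criterion of Lemma \ref{eigrestrict}, combined with elementary bookkeeping about the divisors of $N = Mp$. Recall that $\beta_{g,h} = \alpha_{\omega^2,\psi}^{g,h}$ with $\psi = \theta\omega^{-2}$, that $f = f_{\psi}$ by the standing conventions, and that $f_{\omega^2}$ equals $p$ when $p \ge 5$ (since $\omega^2$ is then a nontrivial character of $(\Z/p\Z)^{\times}$) and equals $1$ when $p = 3$. By Lemma \ref{eigrestrict}, $\beta_{g,h}$ vanishes unless $f_{\omega^2} \mid Q_g = N/g$ and $f \mid Q_h = N/h$, and the entire proof consists of determining exactly when both divisibilities can hold.

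For the second assertion I would argue as follows. Suppose $p \ge 5$ and $p \mid g$. Since $p \nmid M$ we have $p^2 \nmid N$, so $p \nmid N/g = Q_g$; as $f_{\omega^2} = p$ in this regime, the condition $f_{\omega^2} \mid Q_g$ fails, whence $\beta_{g,h} = 0$ regardless of $h$.

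For the first assertion, suppose $p \nmid g$, so that $g \mid M$ and hence $Q_g = (M/g)p$ is divisible by $p$; since $f_{\omega^2}$ divides $p$ in all cases, the first divisibility is automatic and everything rests on the condition $f \mid Q_h$. Writing a divisor $h$ of $N = Mp$ as $h = h_0 p^e$ with $h_0 \mid M$ and $e \in \{0,1\}$, we get $Q_h = (M/h_0)p^{1-e}$; since $M \mid f$ (as $f$ is $M$ or $Mp$) and $\gcd(M,p) = 1$, the requirement $f \mid Q_h$ forces $M \mid M/h_0$, i.e.\ $h_0 = 1$, so $h \in \{1, p\}$. If $h = 1$ there is no further constraint, and indeed $\beta_{g,1}$ need not vanish; if $h = p$ then $Q_h = M$, so $f \mid M$, which with $M \mid f$ gives $f = M$. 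In every remaining case $f \nmid Q_h$ and $\beta_{g,h} = 0$, which is exactly the claimed statement.

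I do not expect a genuine obstacle here: the lemma is a straightforward consequence of Lemma \ref{eigrestrict} together with the divisor structure of $N = Mp$. The only points needing a little care are recording the conductor of $\omega^2$ correctly (it behaves differently for $p = 3$ and for $p \ge 5$) and invoking $p^2 \nmid N$, both of which follow immediately from the hypothesis $p \nmid M\varphi(M)$.
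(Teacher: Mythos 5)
Your proof is correct and follows the same route as the paper, which simply observes that the lemma is a direct corollary of Lemma \ref{eigrestrict}, noting for the second statement that $f_{\omega^2} = p$ does not divide $Q_g$ when $p \ge 5$ and $p \mid g$. Your more detailed bookkeeping of the divisor structure of $N = Mp$ (in particular that $p^2 \nmid N$ and that $M \mid f$ forces $h \in \{1,p\}$) is exactly the verification the paper leaves implicit.
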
	 

\begin{proof}
	This is a a direct corollary of Lemma \ref{eigrestrict}.  For the second statement, we note for $p \ge 5$
	 that $f_{\omega^2} = p$ does not divide $Q_g$.
\end{proof}
	 
We deal below with the remaining terms.

\begin{lemma} \label{stupidlemma}
	Suppose that $g$ and $h$ are relatively prime divisors of $N$.	
	Let $\delta = 1$ if $N$ is odd or $gh$ is even and $\delta = 2$ otherwise.
	\begin{enumerate}
		\item[a.] There exist $a, b \in (\Z/N\Z)^{\times}$ such that $ag + bh = \delta$ in $\Z/N\Z$.
		\item[b.] Let $\rho \in X_N$ be nontrivial.  If $g \neq N$ and $h = \delta = 1$, then
		the numbers $a$ and $b$ of the first part can be chosen so that $\rho(b) \neq 1$ unless
		$f_{\rho} = 6\gcd(f_{\rho},g)$ and $3 \nmid g$.
	\end{enumerate}
\end{lemma}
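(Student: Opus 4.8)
My plan is to solve $ag+bh\equiv\delta\pmod N$ one prime power at a time and glue by the Chinese remainder theorem. Fix a prime $q\mid N$; since $\gcd(g,h)=1$, at most one of $g,h$ is divisible by $q$. If $q\mid g$ (the case $q\mid h$ being symmetric), put $a\equiv 1$ and $b\equiv h^{-1}(\delta-g)\pmod{q^{v_q(N)}}$: here $h$ is a unit mod $q^{v_q(N)}$, and $\delta-g$ is prime to $q$ because $q\nmid\delta$ — indeed $\delta\in\{1,2\}$, and if $q=2$ then $q\mid g$ forces $gh$ even, hence $\delta=1$. If $q\nmid gh$, both $g,h$ are units mod $q^{v_q(N)}$; for $q$ odd I fix a unit $b$ with $b\not\equiv\delta h^{-1}\pmod q$ (possible since there are at least two units mod $q$) and set $a\equiv g^{-1}(\delta-bh)$, which is then a unit; for $q=2$ the hypothesis $q\nmid gh$ forces $gh$ odd and so $\delta=2$, and $a\equiv g^{-1}$, $b\equiv h^{-1}$ gives $ag+bh\equiv 2=\delta\pmod{2^{v_2(N)}}$. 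Assembling these local choices by CRT produces $a,b\in(\Z/N\Z)^\times$ with $ag+bh\equiv\delta$; this uses only the defining property of $\delta$.

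\textbf{Part (b).} Since $h=\delta=1$, the relation forces $b=1-ag$, so the task is to choose a unit $a$ modulo $N$ with $1-ag$ a unit and $\rho(1-ag)\neq1$; it suffices to produce two admissible $a$'s giving distinct values of $\rho(b)$, as then one of them works. Write $f=f_\rho$ and $\rho=\prod_{q\mid f}\rho_q$ with each $\rho_q$ a primitive character mod $q^{v_q(f)}$. A CRT analysis shows that, as $a$ runs over the units with $1-ag$ a unit, the attainable residues $b\bmod f$ form a product set $V=\prod_{q\mid f}V_q$, where $V_q\subseteq(\Z/q^{v_q(f)}\Z)^\times$ equals $\{1\}$ when $v_q(g)\ge v_q(f)$; equals $\{1-q^{v_q(g)}w : w\in(\Z/q^{v_q(f)-v_q(g)}\Z)^\times\}$ when $1\le v_q(g)<v_q(f)$; and equals the set of units $\not\equiv1\pmod q$ when $q\nmid g$. (The hypotheses $h=\delta=1$ ensure that $2\mid N$ implies $g$ even, which rules out the degenerate shape $q=2$, $q\nmid g$ and makes part (a) applicable, so $V\neq\emptyset$.) Then $\rho$ is nonconstant on $V$ — which is all we need — unless every $\rho_q$ is constant on $V_q$.

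The core is to determine the local shapes on which a primitive $\rho_q$ must be constant. When $q\nmid g$ with $v_q(f)\ge2$, or when $1\le v_q(g)<v_q(f)$, the ratios within $V_q$ fill out a subgroup of one-units of $(\Z/q^{v_q(f)}\Z)^\times$ containing $1+q^{v_q(f)-1}\Z/q^{v_q(f)}\Z$ (for $q=2$ in the second case one must assume $v_2(g)\le v_2(f)-2$, since differences of odd numbers are even), and a primitive $\rho_q$ is nontrivial there; when $q\nmid g$ with $v_q(f)=1$ and $q\ge5$, the ratios within $V_q$ exhaust $(\Z/q\Z)^\times$. In all these cases $\rho_q$ is nonconstant, so $\rho$ is nonconstant on $V$ and we are done. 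The remaining "bad" local shapes are exactly: $v_q(g)\ge v_q(f)$, where $V_q=\{1\}$ and $\rho_q$ is constant $=1$; the shape $q=3$, $v_3(f)=1$, $3\nmid g$, where $V_3=\{2\bmod3\}$ and $\rho_3$ is constant $=-1$ (the unique nontrivial character mod $3$); and the shape $q=2$, $v_2(g)=v_2(f)-1\ge1$, where $V_2=\{1+2^{v_2(f)-1}\}$ and $\rho_2$ is constant $=-1$, because a primitive character mod $2^e$ with $e\ge2$ sends the generator $1+2^{e-1}$ of $\ker\big((\Z/2^e\Z)^\times\to(\Z/2^{e-1}\Z)^\times\big)$ to $-1$.

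Hence, if $\rho$ is constant on $V$, its value is $(-1)^{\varepsilon}$ where $\varepsilon$ counts how many of $q\in\{2,3\}$ occur in their bad $-1$-shape; we fail to obtain $\rho(b)\neq1$ exactly when $\rho\equiv1$ on $V$, i.e. when $\varepsilon$ is even. If $\varepsilon=0$, every $q\mid f$ has $v_q(g)\ge v_q(f)$, i.e. $f\mid g$ — a degenerate case which does not occur in the intended application, where $f_\rho\nmid g$. If $\varepsilon=2$, then $v_2(g)=v_2(f)-1$, $v_3(f)=1$, $3\nmid g$, and $v_q(g)\ge v_q(f)$ for all other $q\mid f$; this says precisely $\gcd(f,g)=f/6$ and $3\nmid g$, i.e. $f_\rho=6\gcd(f_\rho,g)$ and $3\nmid g$, the claimed exception. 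In every other case the two admissible choices of $a$ isolated above give different values of $\rho(b)$, proving the lemma. I expect the main obstacle to be the $2$-adic bookkeeping: correctly pinning down the shape of $V_2$ and using the two facts that a primitive character mod $2^e$ is faithful on $1+4\Z/2^e\Z$ for $e\ge3$ and sends $1+2^{e-1}$ to $-1$ for $e\ge2$, so that the parity count producing the "$6$" comes out right.
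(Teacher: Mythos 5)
Your proof is correct and follows essentially the same route as the paper's: the Chinese remainder theorem prime by prime for part (a), and for part (b) a local analysis of the attainable residues of $b = 1-ag$ modulo the conductor, isolating the same two forced-$(-1)$ shapes at $2$ and $3$ whose simultaneous occurrence produces the $f_\rho = 6\gcd(f_\rho,g)$, $3 \nmid g$ exception. Your explicit flagging of the degenerate possibility $f_\rho \mid g$ (where $b \equiv 1 \bmod f_\rho$ is forced) is if anything slightly more careful than the paper, which implicitly excludes it by choosing a prime dividing $Q_g$ after replacing $N$ by $f_\rho$.
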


\begin{proof}
	Part a is a simple application of the Chinese remainder theorem.  We focus on part b.  For its
	purposes, we can replace $N$ by $f_{\rho}$ and assume that $\rho$ is primitive of conductor $N$.
	
	Let $\ell$ be a prime dividing $Q_g$.
	Let $\epsilon = 0$ if $\ell$ is odd and $\epsilon = 1$ if $\ell = 2$,
	let $r \ge 1$ be such that $\ell^r$ exactly divides $N$, and let $s \ge 0$ be such that 
	$\ell^{s+\epsilon}$ exactly divides $g$ (noting for $\ell = 2$ that $g$ is even as $\delta = h = 1$).
	Since $\ell$ divides $Q_g$, we have $r > s+\epsilon$.
	It suffices to see that either $\rho_{\ell}  = \rho|_{(\Z/\ell^r\Z)^{\times}}$ takes on at least two different
	values on allowed values of $b$ modulo $\ell^r$, or at least one nontrivial value in the case that $Q_g$
	is a prime power.
	
	If $s \ge 1$, then primitivity of $\rho$ implies that $\rho_{\ell}(b)$ can be any one of 
	$\ell^{r-s-\epsilon-1}(\ell-1)$ different primitive 
	$\ell^{r-s-\epsilon}$th roots of unity (noting that $b-1$ is exactly divisible by $\ell^{s+\epsilon}$).  	
	This yields at least two values so long as $\ell$ is odd or $r > s+2$.  If $s = 0$, then $b$ can be taken to be 
	any prime-to-$\ell$ value modulo $\ell^r$ with $b \not\equiv 1 \bmod \ell^{1+\epsilon}$.  
	Again by primitivity, $\rho_{\ell}$ will take on at least two values on
	such $b$ if $\ell \ge 5$, $\ell = 3$ and $r \ge 2$, or $\ell = 2$ and $r \ge 3$.  
	If $\ell = 3$ and $r = 1$, then $b \equiv 2 \bmod 3$ and $\rho_3(b) = -1$.  For any $s \ge 0$, if $\ell = 2$ and
	$r = s+2$, we have $b \equiv 1+2^{s+1} \bmod 2^{s+2}$ and $\rho_2(b) = -1$.  
	This covers all cases but the excluded case $Q_g = 6$
	and $3 \nmid g$ (for which $\rho(b) = \rho_2(b)\rho_3(b) = 1$).
\end{proof}

Given relatively prime divisors $g$ and $h$ of $N$, we may
choose $a, b \in (\Z/N\Z)^{\times}$ with $ag + bh = \delta$
by first part of Lemma \ref{stupidlemma}.  
By applying Lemma \ref{multconst}, the formula 
$$
	[ga:hb] = [ga:\delta]+[\delta:hb]
$$ 
provides the identity
\begin{equation} \label{gensumeq}
	\sum_{\chi \in X_N} \chi(a)\psi(b) \alpha_{\chi,\psi}^{g,h} = \sum_{\chi \in X_N} \chi(a) \alpha_{\chi,\psi}^{g,\delta}
	+ \sum_{\chi \in X_N} \psi(b)\alpha_{\chi,\psi}^{\delta,h}.
\end{equation}
There are at most six terms in \eqref{gensumeq} not in $A$, two from each sum.
Using the antisymmetry of the eigensymbols for even characters, equation \eqref{gensumeq} yields
\begin{multline} \label{gensum6}
	\omega^2(a)\theta\omega^{-2}(b)\beta_{g,h} 
	- \omega^2(b)\theta\omega^{-2}(a)\beta_{h,g}
	\\ \equiv \omega^2(a)\beta_{g,\delta} 
	- \theta\omega^{-2}(a)\beta_{\delta,g}
	+ \theta\omega^{-2}(b)\beta_{\delta,h}
	- \omega^2(b)\beta_{h,\delta} \bmod A.
\end{multline}

\begin{lemma} \label{first_reduct}
	Suppose that $p \ge 5$.  For every proper divisor $g$ of $N$, we have $\beta_{g,1} \in A + \mc{O}\beta$.
\end{lemma}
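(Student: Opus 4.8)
The plan is to run the ``telescoping'' identity \eqref{gensum6} in the special case $h = 1$, where $\delta = 1$ as well (since then $gh = g$ is... actually we should be careful: $\delta$ depends on the parity of $gh$, so if $g$ is odd then $\delta$ may be $2$; but note that here we are in the situation of Section \ref{generation} where $N = Mp$ with $M$ odd or divisible by $4$, so when $h=1$ we examine the cases $\delta = 1$ and $\delta = 2$ separately, reducing the second to the first by a congruence argument). Taking $h = 1$ in \eqref{gensum6} and using that $\beta_{g,1}$ and $\beta_{1,g}$ are the two ``new'' terms on the left while the right-hand side involves only $\beta_{g,\delta}$, $\beta_{\delta,g}$, $\beta_{\delta,1}$, $\beta_{1,\delta}$, we get a single linear relation modulo $A$ among $\beta_{g,1}$, $\beta_{1,g} = \pm\beta_{g,1}$ (antisymmetry for the even character $\theta$ forces $\beta_{1,g} = -\beta_{g,1}$, as $\omega^2$ is even), and multiples of $\beta = \beta_{1,1}$. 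Thus the first step is: for $g$ with $\delta = 1$ (i.e., $g$ even, or $N$ odd), choose $a, b$ by Lemma \ref{stupidlemma}b so that $\theta\omega^{-2}(b) \neq 1$ — this is possible except in the excluded configuration $f_{\psi} = 6\gcd(f_\psi, g)$ with $3 \nmid g$, which cannot occur here since $p \ge 5$ means $3 \nmid N$, hence $3 \nmid f_\psi$; so the exception is vacuous. Then the coefficient of $\beta_{g,1}$ in the resulting relation is $\omega^2(a)\theta\omega^{-2}(b) - (\mp 1)\omega^2(b)\theta\omega^{-2}(a)$, and I would show this is a unit in $\mc{O}$ by a mod-$p$ computation, using the freedom in the choice of $(a,b)$ together with $\theta\omega^{-2}(b) \neq 1$ to avoid the bad congruence. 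This expresses $\beta_{g,1}$ in terms of $\beta_{g,\delta} = \beta_{g,1}$ (same thing) and multiples of $\beta$ — wait, that is circular, so the actual content is that the $\beta_{g,\delta}, \beta_{\delta,g}$ on the right with $\delta = 1$ coincide with the left-hand terms, and \eqref{gensum6} collapses to $(\text{unit})\cdot\beta_{g,1} \equiv (\text{coeff})\cdot\beta = 0 \bmod A + \mc{O}\beta$; more honestly, I should instead feed in a relation with $h \ne 1$ chosen so that $gh$ telescopes down through $\beta$, and I will organize the induction on the number of prime factors of $g$.

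Let me restructure: the cleaner approach is induction on $g$ (ordered by divisibility). For the base case $g = 1$, $\beta_{1,1} = \beta \in A + \mc{O}\beta$ trivially. For the inductive step, write $g = g' \ell$ with $\ell$ a prime and $g'$ a proper divisor of $g$, apply \eqref{gensum6} with the pair $(g', \ell)$ in place of $(g, h)$ — here $g'$ and $\ell$ are relatively prime divisors of $N = Mp$ — and with $\delta$ as prescribed. The left side of \eqref{gensum6} then involves $\beta_{g', \ell}$ and $\beta_{\ell, g'}$; by Lemma \ref{zero_terms}, $\beta_{\ell, g'} = 0$ unless $g' = 1$, and $\beta_{g',\ell} = 0$ unless $\ell = 1$ (impossible) or $\ell = p$ and $f = M$; so in the generic case \emph{all} left-hand terms vanish and \eqref{gensum6} becomes a relation expressing $0$ in terms of $\beta_{g',\delta}, \beta_{\delta,g'}, \beta_{\delta,\ell}, \beta_{\ell,\delta}$. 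Hmm — that does not obviously produce $\beta_{g,1}$. The correct choice is rather to apply \eqref{gensum6} with $(g, h) = (g, 1)$ but \emph{rewrite} one of the right-hand symbols: actually $\beta_{g,\delta}$ for $\delta = 1$ is exactly $\beta_{g,1}$, so \eqref{gensum6} directly gives, after collecting,
\[
  \bigl(\omega^2(a)\theta\omega^{-2}(b) + \theta\omega^{-2}(a) - \omega^2(a)\bigr)\beta_{g,1} \equiv \bigl(\theta\omega^{-2}(b) - 1\bigr)\beta_{1,g} \bmod A,
\]
using $\beta_{\delta,1} = \beta_{1,\delta} = \beta$ absorbed into the error and antisymmetry to relate $\beta_{h,g}$-type terms. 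Then $\beta_{1,g} = -\beta_{g,1}$, so both sides are multiples of $\beta_{g,1}$, and the net coefficient is
\[
  \omega^2(a)\theta\omega^{-2}(b) + \theta\omega^{-2}(a) - \omega^2(a) + \theta\omega^{-2}(b) - 1;
\]
the real task is to choose $a, b$ (subject to $ag + b = \delta$) making this a unit. For $\delta = 2$, one reduces to $\delta = 1$ by the observation that replacing $(g, h, \delta) = (g, 1, 2)$ and noting $[ga : b] = [ga : 2] + [2 : b]$ versus the $\delta = 1$ split differ by symbols already handled, or more simply by scaling; I will spell out the $\delta = 2$ case by the same mechanism with $ag + bh = 2$ and tracking the extra factor, using that $2$ is a unit mod $p$.

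The main obstacle I anticipate is the mod-$p$ unit computation for the coefficient of $\beta_{g,1}$: one must verify that the available choices of $(a,b)$ with $ag + b \equiv \delta$ are numerous enough that the combination $\omega^2(a)\theta\omega^{-2}(b) + \theta\omega^{-2}(a) - \omega^2(a) + \theta\omega^{-2}(b) - 1$ is nonzero mod $\mf{p}$ for \emph{some} choice, and this is exactly where the hypothesis $\theta \ne \omega^2$ (built into the standing assumptions of Section \ref{generation}, cf. the exclusion $f = f_{\theta\omega^{-2}}$ divisible by $M$ and $\theta$ even with $f \ne 3, 4$) and $p \ge 5$ (ensuring $3 \nmid N$, so Lemma \ref{stupidlemma}b applies without its exceptional clause) get used. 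I would handle this by: (1) picking $b$ with $\theta\omega^{-2}(b) \ne 1$ via Lemma \ref{stupidlemma}b, which kills the naive vanishing; (2) if the coefficient still vanishes mod $\mf{p}$ for that $b$, perturbing $b \mapsto b'$ within the same residue class mod $f_\psi$ but a different class mod $N/f_\psi$ (legal since only $\chi(a)\psi(b)$ with $\psi = \theta\omega^{-2}$ matters, and $\psi$ has conductor $f$), changing $\omega^2(b)$ while fixing $\theta\omega^{-2}(b)$ — a comparison of two such choices shows the coefficient cannot vanish for both unless $\omega^2$ is trivial on the relevant subgroup, which would force $p \mid$ something contradicting $p \ge 5$. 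This yields the unit, completing the induction and hence the lemma.
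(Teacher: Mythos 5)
Your proposal contains several genuine errors that would sink the argument. First, the claim that ``$p \ge 5$ means $3 \nmid N$, hence the exceptional clause of Lemma \ref{stupidlemma}b is vacuous'' is false: $N = Mp$ and the standing hypothesis is only $p \nmid M\varphi(M)$, which for $p \ge 5$ permits $3 \mid M$ (the paper explicitly treats $M = 12$ in Lemma \ref{M_not_1_case}). So the case $g$ even, $f = 6\gcd(f,g)$, $3 \nmid g$ really does occur, Lemma \ref{stupidlemma}b gives you no $b$ with $\theta\omega^{-2}(b) \neq 1$ there, and a separate device is required; the paper runs \eqref{gensum6} with $h = 3$ (all of $\beta_{g,3}$, $\beta_{1,3}$, $\beta_{3,g}$, $\beta_{1,g}$ vanish since $3 \mid f$ and $M \mid f$) to reduce to $g = 3$ with $N$ even, which then lands in the $\delta = 2$ case. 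Second, your antisymmetry step is wrong: the identity is $\alpha_{\chi,\psi}^{g,h} = -\chi(-1)\alpha_{\psi,\chi}^{h,g}$, which swaps the \emph{characters} along with the divisors, so $\beta_{1,g} = \alpha_{\omega^2,\theta\omega^{-2}}^{1,g} = -\alpha_{\theta\omega^{-2},\omega^2}^{g,1}$, which is not $-\beta_{g,1}$ unless $\theta = \omega^4$. The correct and decisive fact is that $\beta_{1,g} = 0$ by Lemma \ref{zero_terms} (since $g > 1$, $p \nmid g$, and $M \mid f$ force $f \nmid N/g$). With that, and with $\beta_{\delta,1} = \beta_{1,\delta} = \beta$ already lying in $B = A + \mc{O}\beta$ when $\delta = 1$, equation \eqref{gensum6} collapses to $(\theta\omega^{-2}(b)-1)\beta_{g,1} \in B$, and no further ``unit coefficient'' computation of the kind you sketch is needed or correct.

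Third, the $\delta = 2$ case (N even, $g$ odd) cannot be dispatched ``by scaling'': here one gets, after observing $\beta_{1,2} = \beta_{g,2} = \beta_{2,g} = \beta_{1,g} = 0$ from Lemma \ref{eigrestrict}, the relation $\omega^2(a)\beta_{g,1} - \beta_{2,1} \in A$ for every $a$ with $a g + b = 2$, and one must \emph{vary} $a$ and use that $\omega^2$ takes at least two values on the allowed residues mod $p$ --- this is the actual place where $p \ge 5$ enters the lemma, not through any claim about $3 \nmid N$. Your proposed induction on the prime factorization of $g$ is abandoned midstream (as you note, it does not produce $\beta_{g,1}$), and your final ``perturb $b$ within its class mod $f_\psi$'' step is built on the incorrect identification $\beta_{1,g} = -\beta_{g,1}$, so the coefficient you are trying to make a unit is not the coefficient that actually appears. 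The overall strategy --- exploit \eqref{gensum6} with $h = 1$ and Lemma \ref{stupidlemma} --- is the right one, but as written the proof does not go through.
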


\begin{proof}	
	Set $B = A + \mc{O}\beta$.  We may assume that $g > 1$, and by
	Lemma \ref{zero_terms}, that $p \nmid g$.
	
	Suppose first that $N$ is odd or $g$ is even, in which case we may choose $a, b \in (\Z/N\Z)^{\times}$ 
	such that $ag + b = 1$ by Lemma \ref{stupidlemma}a.  By Lemma \ref{zero_terms},
	we have $\beta_{1,g} = 0$.
	The last two terms of \eqref{gensum6} are in $B$ by definition.  So, 
	dividing out by $\omega^2(a)$,
	equation \eqref{gensum6} reduces to
	$$
		(\theta\omega^{-2}(b)-1)\beta_{g,1} \in B.
	$$
	If either $f \neq 6\gcd(f,g)$ or 
	$3 \mid g$, we may apply
	Lemma \ref{stupidlemma}b to
	choose $a$ such that $b = 1 - ag$ satisfies $\theta\omega^{-2}(b) \neq 1$ to conclude that
	$\beta_{g,1} \in B$.  In particular, we are done if $N$ is odd.
	
	Suppose, on the other hand, that $g$ is even, $f = 6\gcd(f,g)$, and $3 \nmid g$.
	We again apply Lemma \ref{stupidlemma}a to choose $a, b \in (\Z/N\Z)^{\times}$ 
	such that $ag + 3b = 1$.  Again by Lemma \ref{zero_terms}, we have 
	$\beta_{g,3} = \beta_{1,3} = 0$, as the facts that $3 \mid f$ and $M \mid f$ imply that $f \nmid \frac{N}{3}$. 
	We also have $\beta_{3,g} = \beta_{1,g} = 0$.  From \eqref{gensum6} with $h = 3$, we therefore obtain
	$$
		\omega^2(a)\beta_{g,1} - \omega^2(b)
		\beta_{3,1} \in A.
	$$
	It thus suffices to show that $\beta_{3,1} \in B$ in this case, so to prove the result for $g = 3$ and $N$ even.
	
	We are reduced to the case that $N$ is even and $g$ is odd.  Choose $a, b \in (\Z/N\Z)^{\times}$ 
	such that $ag + b = 2$, and consider \eqref{gensum6} with $\delta = 2$.
	Lemma \ref{eigrestrict} tells us that 
	$\beta_{1,2} = \beta_{g,2} = \beta_{1,g} = \beta_{2,g} = 0$, so this yields
	$$
		\omega^2(a)\beta_{g,1} - \beta_{2,1} \in A.
	$$
	Since $p \ge 5$, we have that $\omega^2$ 
	takes on at least two values on $a \not\equiv 2g^{-1} \bmod p$, 
	so both $\beta_{g,1}$ and $\beta_{2,1}$ are in $A$.  
\end{proof}	
	
\begin{lemma} \label{elt_relns}\
	\begin{enumerate}
		\item[a.]  Suppose that $p \ge 5$ and $f = M$.  For every divisor $g$ of $M$, we have 
		$\beta_{g,p} -\omega^2(g)\beta_{1,p} \in A + \mc{O}\beta$.
		\item[b.]  Suppose that $p = 3$ and $f = N$.  
		For every proper divisor $g$ of $N$, we have $\beta_{g,1} - \beta \in A$.  
	\end{enumerate}
\end{lemma}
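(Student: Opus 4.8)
The plan is to derive both parts from the six-term congruence \eqref{gensum6}, specialized to a carefully chosen second index $h$, using Lemma \ref{eigrestrict} to annihilate most of the terms by conductor reasons and Lemma \ref{first_reduct} (in part a) or Lemma \ref{stupidlemma} (in part b) to dispose of the survivors. Throughout I would use that, since $p \nmid \varphi(N)$, every nontrivial value $\zeta$ of a character on $(\Z/N\Z)^{\times}$ gives units $\zeta$ and $\zeta-1$ in $\mc{O}$, so such factors may be divided out freely.

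For part a ($p \ge 5$, $f = M$), apply \eqref{gensum6} with $h = p$. Since $f_{\omega^2} = p$ does not divide $Q_p = M$, and $f_{\theta\omega^{-2}} = M$ divides $Q_{h'}$ only for $h' \in \{1,p\}$, Lemma \ref{eigrestrict} kills $\beta_{p,g}$, $\beta_{p,1}$, $\beta_{p,\delta}$, and $\beta_{1,g}$ for $g > 1$, together with $\beta_{g,2}$ when $\delta = 2$; the one remaining troublesome term $\beta_{g,1}$ lies in $B := A + \mc{O}\beta$ by Lemma \ref{first_reduct}. Dividing out $\theta\omega^{-2}(b)$, equation \eqref{gensum6} therefore collapses to $\omega^2(a)\beta_{g,p} \equiv \beta_{1,p} \bmod B$, except that when $N$ is even and $g$ is odd one first passes through $\beta_{2,p}$, the $g = 1$ instance then yielding $\beta_{2,p} \equiv \omega^2(2)\beta_{1,p} \bmod B$. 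As $ag \equiv \delta \bmod p$ forces $\omega^2(a) = \omega^2(\delta)\omega^2(g)^{-1}$, the asserted identity $\beta_{g,p} \equiv \omega^2(g)\beta_{1,p} \bmod B$ drops out.

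For part b ($p = 3$, $f = N$), here $\omega^2$ is trivial and $f_{\theta\omega^{-2}} = f_\theta = N$, so Lemma \ref{eigrestrict} forces $\alpha^{g',h'}_{\chi,\psi} = 0$ as soon as $h' \neq 1$. Feeding $h = 1$ into \eqref{gensum6} leaves, when $\delta = 1$, the relation $(\theta(b) - 1)(\beta_{g,1} - \beta) \equiv 0 \bmod A$, and when $\delta = 2$, the relation $\theta(b)(\beta_{g,1} - \beta_{2,1}) \equiv 0 \bmod A$. The second gives $\beta_{g,1} \equiv \beta_{2,1} \bmod A$ for every odd proper $g$ (occurring when $N$ is even); the first gives $\beta_{g,1} \equiv \beta \bmod A$ for every $g$ for which Lemma \ref{stupidlemma}b provides a $b$ with $\theta(b) \neq 1$ — which, since $6 \nmid N$ forces the stated exception away when $N$ is odd, covers all proper $g$ in that case. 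The only divisor not yet addressed is the one with $Q_g = 6$, i.e. $g = M/2$ when $4 \mid M$, and, when $M = 4$, also $g = 2$; here I would chain congruences, using \eqref{gensum6} with $h = 3$ (legitimate as $3 \nmid M/2$) to get $\beta_{M/2,1} \equiv \beta_{3,1} \equiv \beta_{2,1} \bmod A$, then reducing to $\beta_{2,1} - \beta \in A$, which holds directly when $N \neq 12$ and, when $N = 12$, follows from one more application with $g = 3$, $h = 4$ giving $\beta_{3,1} \equiv \beta_{4,1} \bmod A$ together with $\beta_{4,1} \equiv \beta \bmod A$.

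The main obstacle is precisely this last case. When $\theta$ is even and $Q_g = 6$, every admissible $b$ in the decomposition $ag + b = 1$ is forced to satisfy $b \equiv -1 \bmod N$, so $\theta(b) = 1$ and \eqref{gensum6} degenerates; one must detour through the auxiliary moduli $h = 3$ and $h = 4$ and verify by hand that the relevant eigensymbols there vanish by conductor. Everything else is routine bookkeeping once the vanishing pattern dictated by Lemma \ref{eigrestrict} is written out.
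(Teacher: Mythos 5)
Your argument is correct and follows the same route as the paper: specialize the six-term congruence \eqref{gensum6} to $h=p$ (resp.\ $h=1$), kill most terms by Lemma \ref{eigrestrict}, quote Lemma \ref{first_reduct} for the surviving $\beta_{g,1}$ in part a, and in part b split according to $\delta$. Part a is essentially word-for-word the paper's proof (modulo the slip ``the $g=1$ instance,'' which should read the $\delta=1$ instance applied to $g=2$). The genuinely interesting divergence is in part b. The paper disposes of the $\delta=1$ case by asserting that one may choose $b\equiv 1\bmod g$ with $\theta(b)\neq 1$ because $g$ properly divides $f=N$; but the requirement that $a=(1-b)/g$ also be a unit restricts $b$ further, and exactly in the exceptional case of Lemma \ref{stupidlemma}b, namely $Q_g=6$ with $3\nmid g$ (so $g=M/2$ with $4\mid M$; concretely $N=12$, $g=2$), the only admissible $b$ is $-1$, on which the even character $\theta$ is trivial. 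You correctly flag this and repair it by the same detour through auxiliary moduli $h=3$ (and $h=4$ when $N=12$) that the paper itself uses in the proof of Lemma \ref{first_reduct}; I checked that each link of your chain $\beta_{M/2,1}\equiv\beta_{3,1}\equiv\beta_{2,1}\equiv\beta$ follows from \eqref{gensum6} with all cross terms vanishing by Lemma \ref{eigrestrict}. So your write-up is not only correct but closes a small gap that the paper's own proof of this lemma passes over.
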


\begin{proof}
	Let $h = p$ in part a and $h = 1$ in part b.  We may clearly suppose that $g > 1$.
	Choose $a, b \in (\Z/N\Z)^{\times}$ with $ag + bh = \delta$, with
	$\delta$ as in Lemma \ref{stupidlemma}a.
	In part a, we have $\beta_{\delta,g} = \beta_{p,g} = 
	\beta_{p,\delta} = 0$ by Lemma \ref{zero_terms},
	so \eqref{gensum6} yields
	$$
		\omega^2(a)\theta\omega^{-2}(b)\beta_{g,p} \equiv \omega^2(a)\beta_{g,\delta} + 
		\theta\omega^{-2}(b)\beta_{\delta,p} 
		\bmod A.
	$$
	If $\delta = 2$ (i.e., $g$ is odd and $N$ is even), 
	we have $\beta_{g,\delta} = 0$ and $a \equiv 2g^{-1} \bmod p$, so this reduces to
	$$
		\omega^2(2)\beta_{g,p} - 
		\omega^2(g)\beta_{2,p} \in A,
	$$
	which puts us back in the setting that $\delta = 1$.  
	If $\delta = 1$, then $a \equiv g^{-1} \bmod p$, and since $\beta_{g,1} \in A + \mc{O}\beta$ by 
	Lemma \ref{first_reduct}, we obtain that
	$\beta_{g,p} - \omega^2(g)\beta_{1,p} \in A+\mc{O}\beta$.

	For part b, we have $p = 3$ and $\omega^2 = 1$.  
	The module $H$ is trivial if $M = 1$, so we may take $M \ge 4$. 
	By \eqref{gensum6}, we have
	$$
		\theta(b)(\beta_{g,1}-\beta_{\delta,1}) \equiv \theta(a)(\beta_{1,g}-\beta_{\delta,g})
		+ (\beta_{g,\delta} - \beta_{1,\delta}) \bmod A.
	$$
	If $\delta = 2$ (so $g$ is odd and $N$ is even), then the last two terms are zero.  
	Since $g > 1$ and $f = N$ by assumption, the other two terms on the right
	are also zero, and we obtain $\beta_{g,1}-\beta_{2,1} \in A$.  
	If $\delta = 1$ (so $g$ is even or $N$ is odd), we obtain $(\theta(b)-1)(\beta_{g,1}-\beta) \in A$.
	We may choose $b$ to be any unit which is $1$ modulo $g$, and in particular so that 
	$\theta(b) \neq 1$ since $g$ properly divides $f = N$.
\end{proof}

Since we assumed that $f = N = Mp$ if $p = 3$, we are done in that case, and for $p \ge 5$, we are reduced to showing that $\beta \in A$ if $\theta \neq \omega^2$.  We consider $M > 1$ and $M = 1$ separately.

\begin{lemma} \label{M_not_1_case}
	If $p \ge 5$ and $M > 1$, then $\beta \in A$.
\end{lemma}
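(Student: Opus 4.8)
\subsubsection*{Proof proposal}

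Since $M > 1$, the restriction $\theta_M$ of $\theta$ to $(\Z/M\Z)^{\times}$ is a nontrivial character of conductor $M$, and the hypotheses $p \nmid \varphi(M)$ and $p \ge 5$ are what will convert it into a $p$-adic unit. The plan is to produce a single relation $c\beta \in A$ with $c \in \mc{O}^{\times}$; this gives $\beta \in A$ at once.

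Fix $u = 1$ and a unit $v \in (\Z/N\Z)^{\times}$ with $v \equiv -1 \pmod{p}$, leaving the residue $v_M := v \bmod M$ (chosen so that $v_M \ne -1$) to be pinned down later. Apply $e_{\theta}$ to the Manin relation $[1:v] = -[-(1+v):1] - [-v:1+v]$ (which follows from \eqref{simplereln} and \eqref{sumreln}), and expand every symbol in eigensymbols via Lemma \ref{multconst}. On the one hand this writes $e_{\theta}[1:v]$ as $(\theta\omega^{-2}(v) - \omega^2(v))\beta$ plus an element of $A$, using the antisymmetry $\alpha^{1,1}_{\theta\omega^{-2},\omega^2} = -\beta$ (valid because $\theta\omega^{-2}$ is even); since $v \equiv -1 \pmod p$, the coefficient is $c := \theta_p(-1)\theta_M(v_M) - 1$, where $\theta_p$ is the restriction of $\theta$ to $(\Z/p\Z)^{\times}$. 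On the other hand, $[-(1+v):1]$ and $[-v:1+v]$ expand into eigensymbols $\alpha^{g,1}_{\chi,\theta\chi^{-1}}$, resp.\ $\alpha^{1,g}_{\chi,\theta\chi^{-1}}$, with $g = \gcd(1+v,N)$, a multiple of $p$. By the conductor constraints of Lemma \ref{eigrestrict}, every term with $\chi \in \{\omega^2,\theta\omega^{-2}\}$ vanishes, except that when $\theta\omega^{-2}$ has conductor $M$ (and $\gcd(1+v,M)=1$) one is left with terms that, by antisymmetry, are scalar multiples of $\beta_{1,p}$; a short computation using that $\psi_0 := \theta\omega^{-2}$ is even shows these combine to $\psi_0(\tilde w)(1-\omega^2(v))\beta_{1,p}$ for a certain unit $\tilde w$, which vanishes because $\omega^2(v) = \omega^2(-1) = 1$. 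Hence $e_{\theta}[1:v] \in A$, so $c\beta \in A$.

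It remains to choose $v_M \ne -1$ so that $c = \theta_p(-1)\theta_M(v_M) - 1$ is a unit. Since $\theta_M(v_M)$ has order dividing $\varphi(M)$ and $p \nmid \varphi(M)$, the element $\theta_p(-1)\theta_M(v_M)$ is a prime-to-$p$ root of unity, so $c$ is a $p$-adic unit as soon as $c \ne 0$, i.e.\ as soon as $\theta_M(v_M) \ne \theta_p(-1)$. If $\theta_p(-1) = -1$, take $v_M = 1$ (note $M > 2$). If $\theta_p(-1) = 1$, then $\theta_M$ is even of conductor $M$; no even character has conductor $2$, $3$, $4$, or $6$, so $\varphi(M) > 2$, and then $(\Z/M\Z)^{\times} \setminus \{-1\}$ generates $(\Z/M\Z)^{\times}$, so the nontrivial character $\theta_M$ must take a value $\ne 1$ on it. With such a $v_M$ we get $c\beta \in A$ with $c \in \mc{O}^{\times}$, hence $\beta \in A$.

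The step I expect to require the most care is the middle one: tracking, through the divisibilities of Lemma \ref{eigrestrict}, exactly which eigensymbols $\alpha^{g,1}_{\chi,\theta\chi^{-1}}$ and $\alpha^{1,g}_{\chi,\theta\chi^{-1}}$ with $\chi$ in the ``bad set'' $\{\omega^2,\theta\omega^{-2}\}$ can survive, and checking that the surviving contributions (which arise only when $\theta\omega^{-2}$ has conductor $M$, and are forced to be multiples of $\beta_{1,p}$) cancel up to the factor $1 - \omega^2(v)$. The remaining inputs — that $p \nmid \varphi(M)$ makes the relevant roots of unity $p$-adically invertible after subtracting $1$, and the existence of a suitable residue $v_M$ — are routine, the only subtlety being the small-conductor cases ruled out by the nonexistence of even Dirichlet characters of conductor $2$, $3$, $4$, $6$.
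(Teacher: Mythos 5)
Your argument is correct, but it takes a genuinely different route from the paper's. The paper runs the Bezout-type identity \eqref{gensum6} with $g = \epsilon p$, $h = 1$, so its auxiliary unit $b = 1 - \epsilon p a$ satisfies $b \equiv 1 \bmod p$ but is constrained by the relation $\epsilon p a + b = 1$; arranging $\theta(b) \neq 1$ then requires Lemma \ref{stupidlemma}b, which fails exactly for $M = 12$ and forces a separate polynomial-vanishing computation over $\F_p$ (with its own special check at $p=5$). You instead apply the Manin relation directly to $[1:v]$ with $v \equiv -1 \bmod p$, deliberately letting the middle index $1+v$ be divisible by $p$: Lemma \ref{zero_terms} (this is where $p \ge 5$ enters, via $f_{\omega^2} = p$) kills every $\beta_{g,1}$ with $p \mid g$, and the only surviving bad contributions --- multiples of $\beta_{1,p}$, occurring when $f = M$ and $\gcd(1+v,M)=1$ --- cancel with coefficient $\theta\omega^{-2}(w)\bigl(1-\omega^2(v)\bigr) = 0$ since $\omega^2(-1)=1$; I verified this cancellation and it is right. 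Because the residue $v_M$ is then free apart from $v_M \neq -1$, you can always force $\theta(v) = \theta_p(-1)\theta_M(v_M) \neq 1$, and $p \nmid \varphi(M)$ makes $\theta(v)-1$ a unit; so the exceptional case $M=12$ and the attendant $\F_p$-polynomial argument disappear entirely, at the cost of stepping outside the uniform \eqref{gensum6} framework the paper uses throughout Section \ref{generation}. Two small remarks: your appeal to generation of $(\Z/M\Z)^{\times}$ by the complement of $\{-1\}$ is unnecessary --- in that case $\theta_M$ is even and nontrivial, so any $x$ with $\theta_M(x)\neq 1$ is automatically $\neq -1$; and the claim $M>2$ (needed so that $v_M=1\neq -1$) deserves the one-line justification that the standing hypothesis $M \mid f$ of this subsection forces $M$ to be odd or divisible by $4$.
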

	
\begin{proof}
	Set $\epsilon = 1$ if $N$ is odd and $\epsilon = 2$ if $N$ is even.
	Using Lemma \ref{stupidlemma}a with $g = \epsilon p$ and $h = 1$, 
	choose $a, b \in (\Z/N\Z)^{\times}$ with $\epsilon pa+b = 1$.
	As $p \ge 5$, we have 
	$\beta_{\epsilon p,1} = 0$ by Lemma \ref{zero_terms}, and \eqref{gensum6} yields
	$$
		\theta\omega^{-2}(a)(\omega^2(b)-1)\beta_{1,\epsilon p}
		\equiv (\omega^2(b)-\theta\omega^{-2}(b))\beta \bmod A,
	$$
	but note that $b \equiv 1 \bmod p$, so this simplifies to
	$(1-\theta(b))\beta \in A$.
	By Lemma \ref{stupidlemma}b, 
	we may choose $b$ with $\theta(b) \neq 1$ so long as $M \neq 12$.

	If $M = 12$, instead choose $a, b \in (\Z/N\Z)^{\times}$ with $2a+b = 1$.  We then have 
	$b \equiv -1 \bmod 12$, and $12 \mid f$ forces $\theta|_{(\Z/12\Z)^{\times}}(-1) = 1$.  
	Let $k$ be the positive even integer with $k \le p-1$
	such that $\theta|_{(\Z/p\Z)^{\times}} = \omega^k$.
	Equation \eqref{gensum6} with $g = 2$ yields
	$$
		\omega^2(a)(\omega^{k-2}(b)-1)\beta_{2,1} \equiv (\omega^{k-2}(b)-\omega^2(b))\beta \bmod A.
	$$
	We then have $\beta \in A$ if there does not exist $w \in \F_p^{\times}$
	such that 
	$$
		G(a) = a^2((1-2a)^{k-2}-1) - w((1-2a)^{k-2}-(1-2a)^2)
	$$ 
	vanishes on all $a \in \F_p-\{0,\frac{1}{2}\}$.  As $G$ has degree $k \le p-1$,
	we see that $G$ cannot vanish on the the latter set unless $k = p-1$.  For $k = p-1$, note that
	$$
		G(a) = (1-2a)^{-2}(a^2(1-(1-2a)^2)-w(1-(1-2a)^4))
	$$
	on $\F_p-\{0,\frac{1}{2}\}$.  No polynomial of degree $4$ in $a$ can vanish on this set for $p > 5$.
	For $p = 5$ and $k = 4$, note that $G$ is independent of $w$ and $G(-1) = 3 \neq 0$.
  \end{proof}

\begin{lemma} \label{M_1_case}
	Suppose that $M = 1$ and $\theta \neq \omega^2$.
	The element $\beta$ is contained in the $\mc{O}$-span of the 
	symbols $\alpha_{\chi,\psi}$ with $\chi \in X_N - \{\omega^2,\theta\omega^{-2}\}$.
	In particular, $\beta \in A$.
\end{lemma}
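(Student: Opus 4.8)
The plan is to produce, from Manin's relation \eqref{sumreln}, a single $\mc{O}$-linear relation among the eigensymbols $\alpha_{\chi,\psi}^{1,1}$ in which $\beta$ occurs with a unit coefficient and every other term is of the form allowed in the statement. Since $M = 1$ forces $N = p$, and since $\varphi(N) = p-1$ is prime to $p$ (so that $\mc{O} = \zp[\mu_{p-1}] = \zp$ and ``unit'' simply means ``nonzero modulo $p$''), Lemma \ref{zero_terms} shows that the only nonvanishing $\beta_{g,h}$ is $\beta_{1,1} = \beta$, so we need only deal with the symbols $\alpha_{\chi,\psi}^{1,1}$. Observe first that if $\theta = \omega^4$ then $\beta = \alpha_{\omega^2,\omega^2}$, which vanishes since the antisymmetry relation $\alpha_{\chi,\psi}^{g,h} = -\chi(-1)\alpha_{\psi,\chi}^{h,g}$ gives $\alpha_{\omega^2,\omega^2} = -\alpha_{\omega^2,\omega^2}$; the same relation gives $\alpha_{\theta\omega^{-2},\omega^2} = -\beta$ in general. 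Thus we may assume $\theta \notin \{\omega^2,\omega^4\}$ (for $p = 5$ only $\theta = 1 = \omega^4$ arises, so $\beta = 0$ and there is nothing to prove), and then $\alpha_{\omega^2,\theta\omega^{-2}}$ and $\alpha_{\theta\omega^{-2},\omega^2}$ are distinct but each equal to $\pm\beta$.

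To obtain the relation, fix $\chi \in X_p$ and put $\psi = \theta\chi^{-1}$. Summing the instance $[a:1] = [a:a+1] + [a+1:1]$ of \eqref{sumreln} against $\chi^{-1}(a)$ over units $a$ and applying $e_\theta$, the boundary symbols $[0:1]$ appearing at $a = -1$ cancel (as $[-1:0] = -[0:1]$); for $a \neq -1$ one has $[a:a+1] = \langle a\rangle[1:1+a^{-1}]$ by \eqref{diamondreln}, and, rewriting both sides with the identity $\alpha_{\chi,\psi}^{1,1} = e_\theta\frac{1}{\varphi(N)}\sum_a\chi^{-1}(a)[a:1]$, one arrives at
$$
e_\theta \sum_{c \in (\Z/p\Z)^{\times}} \bigl( \psi^{-1}(c) - \psi^{-1}(c)\chi^{-1}(1-c) - \psi^{-1}(c-1) \bigr)\,[1:c] \;=\; 0,
$$
where $\chi^{-1}(1-c)$ and $\psi^{-1}(c-1)$ are read as $0$ at $c = 1$. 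Call the coefficient of $[1:c]$ here $g_\chi(c)$, and expand $g_\chi$, a function on $(\Z/p\Z)^{\times}$, in the group of characters. Since $e_\theta\sum_c\rho^{-1}(c)[1:c] = \varphi(N)\alpha_{\theta\rho^{-1},\rho}^{1,1}$, the display becomes a relation $\sum_{\psi'} b_\chi(\psi')\,\alpha_{\theta\psi'^{-1},\psi'}^{1,1} = 0$ with $b_\chi(\psi') = \sum_{c}g_\chi(c)\psi'(c) \in \mc{O}$ (in fact a combination of Jacobi sums). The indices $\psi' = \theta\omega^{-2}$ and $\psi' = \omega^2$ correspond to $\alpha_{\omega^2,\theta\omega^{-2}}^{1,1} = \beta$ and $\alpha_{\theta\omega^{-2},\omega^2}^{1,1} = -\beta$, hence contribute $\bigl(b_\chi(\theta\omega^{-2}) - b_\chi(\omega^2)\bigr)\beta$ in total, while every remaining term is an $\mc{O}$-multiple of some $\alpha_{\chi',\psi'}^{1,1}$ with $\chi' \in X_N \setminus \{\omega^2,\theta\omega^{-2}\}$.

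It remains to choose $\chi$ so that $b_\chi(\theta\omega^{-2}) - b_\chi(\omega^2)$ is a unit, i.e.\ nonzero modulo $p$; this is exactly where $\theta \neq \omega^2$ is used. Since this difference equals $\sum_c g_\chi(c)\bigl((\theta\omega^{-2})(c) - \omega^2(c)\bigr)$, writing $\theta = \omega^k$, $\chi = \omega^{k-e}$, $\psi = \omega^e$, and reducing modulo $p$ (so that $\omega(c) \equiv c$ and the sum becomes an $\F_p$-combination of power sums), it reduces to
$$
\sum_{c \in \F_p \setminus \{0,1\}} \bigl( c^{-e} - c^{-e}(1-c)^{-j} - (c-1)^{-e} \bigr)\bigl( c^{\,k-2} - c^{\,2} \bigr) \;\in\; \F_p,
$$
with $e + j \equiv k \pmod{p-1}$ and all exponents read modulo $p-1$. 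I would first try $\psi = \omega$, i.e.\ $e = 1$ and $j = k-1$, clear the denominators $c^{-1}$ and $(c-1)^{-1}$, and evaluate the resulting $\F_p$-combination of monomial power sums using $\sum_{c \in \F_p^{\times}} c^m \equiv -1$ or $0$ according as $(p-1) \mid m$ or not.

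The main obstacle is to carry out this evaluation uniformly in $p$ and $k$ and confirm it is nonzero under the sole hypothesis $k \not\equiv 2 \pmod{p-1}$, with the exceptional residues $k \equiv 0, 4$ disposed of as above and $p = 7$ checked by hand (where, for instance, $\theta = 1$ — i.e.\ $k \equiv 0$, $\chi = \omega^{5}$ — gives the value $2$); I expect this to be polynomial bookkeeping rather than a genuine difficulty, and if $\psi = \omega$ does not suffice in every case one can vary $\chi$. Once such a $\chi$ is fixed, the displayed relation exhibits $\beta$ as an $\mc{O}$-linear combination of the eigensymbols $\alpha_{\chi',\psi'}^{1,1}$ with $\chi' \in X_N \setminus \{\omega^2,\theta\omega^{-2}\}$, and since these lie in $A$, both assertions of the lemma follow.
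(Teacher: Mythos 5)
Your setup is sound and is in essence a Fourier-dual form of the paper's own argument: both start from the Manin relation $[a:b]=[a:a+b]+[a+b:b]$ (you take $b=1$), project to the $\theta$-eigenspace via Lemma \ref{multconst}, observe that the coefficient of $\beta$ in the resulting relation is the difference of the coefficients of $\alpha_{\omega^2,\theta\omega^{-2}}$ and $\alpha_{\theta\omega^{-2},\omega^2}$, and must then show this coefficient is a unit for some choice of the free parameter. The paper keeps $a$ as that parameter, so the coefficient is the value $F(a)$ of an explicit polynomial of degree at most $k$ (writing $\theta=\omega^k$) with $F'(0)=k\neq 0$; since $k$ is even with $4\le k\le p-1$, either $k\le p-3$ and a nonzero polynomial of that degree cannot vanish on all $p-2$ points of $\F_p-\{0,-1\}$, or $k=p-1$ and an explicit factorization finishes the job. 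You instead sum against $\chi^{-1}(a)$ first, so the coefficient of $\beta$ becomes a difference of Jacobi-type character sums $b_\chi(\theta\omega^{-2})-b_\chi(\omega^2)$, and you must produce one $\chi$ making this a unit; by Fourier inversion on $(\Z/p\Z)^{\times}$ (whose order is prime to $p$) your nonvanishing statement is equivalent to the paper's.

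The gap is that you never establish that nonvanishing, and it is the entire content of the lemma. You explicitly defer it (``I expect this to be polynomial bookkeeping,'' ``if $\psi=\omega$ does not suffice one can vary $\chi$''). Proving that a fixed combination of Jacobi sums is nonzero mod $p$ uniformly in $p$ and $k$ is not routine, and proving that \emph{some} $\chi$ works essentially forces you back through the inverse Fourier transform to the statement that $F$ does not vanish identically on $\F_p-\{0,-1\}$ --- which is exactly the degree argument the paper supplies, including the separate treatment of $k=p-1$ that your sketch does not anticipate. Until that step is written out, the proof is incomplete. (Your peripheral observations --- $N=p$ and $\mc{O}=\zp$, the vanishing of $\beta$ when $\theta=\omega^4$ and in particular for $p=5$, the identity $\alpha_{\theta\omega^{-2},\omega^2}=-\beta$, and the cancellation of the boundary terms at $a=-1$ --- are all correct and consistent with the paper.)
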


\begin{proof}
	Suppose that $a$, $b$, and $a+b$ are all in $(\Z/p\Z)^{\times}$.
	From \eqref{gensumeq} with $g = h = 1$, we have
	\begin{equation} \label{sumeqn}
		\sum_{\chi \in X_N} (\chi(a)\psi(b)-\chi(a)\psi(a+b)-\chi(a+b)\psi(b))\alpha_{\chi,\psi} = 0.
	\end{equation}
	If $\chi \neq \psi$, the difference of the coefficients of $\alpha_{\chi,\psi}$ and $\alpha_{\psi,\chi}$ on the left of 
	\eqref{sumeqn} is
	$$
		(\chi(a)-\chi(a+b))(\psi(b)-\psi(a+b))
		- (\chi(b)-\chi(a+b))(\psi(a)-\psi(a+b)).
	$$
	It suffices to show that this is nonzero in the case $\chi = \omega^2$ (as $\omega^2$ is even,
	noting that $\alpha_{\omega^2,\omega^2} = 0$)
	for some choice of $a$ and $b$, as 
	we can then write $\beta$ as a sum of the $\alpha_{\chi,\psi}$
	with $\chi \notin \{ \omega^2, \theta\omega^{-2} \}$.  It is sufficient to consider the case
	$b = 1$, as is seen by dividing \eqref{sumeqn} by $\chi(b)\psi(b)$, so we suppose this from now on.
	
	Now write $\theta = \omega^k$ for an even $4 \le k \le p-1$, noting that $k \neq 2$ by assumption.
	Our difference is then
	$$
		F(a) = (a^2-(a+1)^2)(1-(a+1)^{k-2}) - (1-(a+1)^2)(a^{k-2}-(a+1)^{k-2})
	$$
	modulo $p$.
	The polynomial $F(a)$ has degree at most $k$ and is nonzero as $k \neq 2$ (e.g., note that $F'(0) = k$),
	so it cannot vanish identically on $\F_p - \{0,-1\}$ unless $k = p-1$.  
	If $k = p-1$, we can reduce our polynomial on $\F_p-\{0,-1\}$ to
	\begin{align*}
		(a^2-(a+1)^2)&(1-(a+1)^{p-3}) - (1-(a+1)^2)(a^{p-3}-(a+1)^{p-3}) 	\\
		&= a^{-2}(a+1)^{-2}(a^2(a^2-(a+1)^2)((a+1)^2-1) - (1-(a+1)^2)((a+1)^2-a^2)) \\
		&= -a^{-1}(a+1)^{-1}(a+2)(2a+1)(a-1)
	\end{align*}
	which is nonzero at $a = 2$ for $p \ge 7$.  Moreover, if $p = 5$, then $\beta = \alpha_{\omega^2,\omega^2}
	= 0$.
\end{proof}

\subsection{Hecke-type operators} \label{Hecke}

In this subsection, we take a minimalistic approach to ``Hecke operators'' on $H$, introducing them only as needed to prove generation, with a view towards ordinary and Eisenstein components of homology, on which our so-called Hecke operators will be actual Hecke operators.  For the purpose of our application, it is necessary to distinguish spaces that are cuspidal at zero, so we let $H$ either be a space of level $N$ modular symbols
or a space of cuspidal-at-zero modular symbols.  We suppose (without loss of generality) in this subsection that $N = Mp$.

The relationship between the following definition, and that of a $T_2$-operator below, and actual Hecke operators will be explained by the proof of Theorem \ref{eisthm}.

\begin{definition} \label{Uell}
	For a prime $\ell$ dividing $N$, a {\em $U_{\ell}$-operator} on $H$
	is a $\zp[\Delta]$-linear endomorphism
	$U_{\ell} \colon H \to H$ satisfying
	\begin{equation} \label{Uformula}
		U_{\ell}[\ell u:v] = \sum_{k=0}^{\ell-1} [u + k \tfrac{N}{\ell}:v].
	\end{equation}
	for all $u, v \in \Z/N\Z$ with $(\ell u,v) = (1)$, and $\ell u \neq 0$ if $H$ is cuspidal at zero. 
	We say that $H$ {\em has a $U_{\ell}$-operator} if there exists a $U_{\ell}$-operator on $H$.  
\end{definition}

Let $\ell$ denote a prime dividing $N$, and let $s \ge 1$ denote the $\ell$-adic valuation of $N$.
As before, $g$ and $h$ denote relatively prime divisors of $N$.

\begin{proposition} \label{U-operator}
	Suppose that $H$ has a $U_{\ell}$-operator and that $g$ and $h$ are not divisible by $\ell$.
	Let $t \le s$ be a positive integer such that $f_{\chi}$ divides $\frac{N}{\ell^t}$.
	If $t < s$, then
	$$
		U_{\ell}\alpha_{\chi,\psi}^{\ell^tg,h} = \ell \alpha_{\chi,\psi}^{\ell^{t-1}g,h},
	$$
	and if $t=s$ (and $\ell^s g \neq N$ if $H$ is cuspidal at zero), then
	$$
		(U_{\ell}-\chi^{-1}(\ell))\alpha_{\chi,\psi}^{\ell^sg,h} = (\ell-1) \alpha_{\chi,\psi}^{\ell^{s-1}g,h}.
	$$
\end{proposition}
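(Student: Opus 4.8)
I would reduce to a character-sum manipulation of shifted symbols via the $U_{\ell}$-formula \eqref{Uformula}, then handle the two ranges of $t$ by a change of the summation variable that is available exactly because $f_{\chi} \mid \frac{N}{\ell^t}$. Write $N = \ell^s N_0$ with $\ell \nmid N_0$, and recall from Section \ref{eigensymbols} that $\alpha_{\chi,\psi}^{g',h} = \frac{1}{\varphi(N)} e_{\theta} \sum_{a} \chi^{-1}(a)[g'a:h]$, summed over $a \in (\Z/N\Z)^{\times}$, for any divisor $g'$ of $N$ prime to $h$. Since $U_{\ell}$ is $\zp[\Delta]$-linear it commutes with $e_{\theta}$, and the hypotheses of \eqref{Uformula} being met with $u = \ell^{t-1}ga$ (here $\ell^t g a \neq 0$ in $\Z/N\Z$, having content $\ell^t g < N$ when $t<s$, while $\ell^s g \neq N$ is part of the hypothesis when $t=s$), one gets
$$
	U_{\ell}\alpha_{\chi,\psi}^{\ell^t g,h} = \frac{1}{\varphi(N)} e_{\theta} \sum_{a \in (\Z/N\Z)^{\times}} \chi^{-1}(a) \sum_{k=0}^{\ell-1} \bigl[ \ell^{t-1}ga + k\tfrac{N}{\ell} : h \bigr].
$$
Write $S_k = \sum_a \chi^{-1}(a)[\ell^{t-1}ga + k\tfrac{N}{\ell}:h]$, which depends only on $k \bmod \ell$; then the right-hand side is $\frac{1}{\varphi(N)} e_{\theta} \sum_{k=0}^{\ell-1} S_k$, and $\frac{1}{\varphi(N)} e_{\theta} S_0 = \alpha_{\chi,\psi}^{\ell^{t-1}g,h}$.

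Suppose $t<s$. Then $v_{\ell}(\tfrac{N}{\ell^t}) = s-t \geq 1$, so for each $j$ the map $a \mapsto a + j\tfrac{N}{\ell^t}$ is a bijection of $(\Z/N\Z)^{\times}$ (it fixes the component of $a$ at every prime of $N$ other than $\ell$, and carries units to units at $\ell$), and it preserves $\chi^{-1}$ because $f_{\chi} \mid \tfrac{N}{\ell^t}$. Substituting $a \mapsto a - j\tfrac{N}{\ell^t}$ in $S_k$ and using $\ell^{t-1}g \cdot \tfrac{N}{\ell^t} = g\tfrac{N}{\ell}$ turns the shift $k\tfrac{N}{\ell}$ into $(k-jg)\tfrac{N}{\ell}$, so $S_k = S_{k-jg}$ for all $j$; as $g$ is a unit mod $\ell$, choosing $jg \equiv k$ gives $S_k = S_0$ for every $k$. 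Hence $\sum_k S_k = \ell S_0$ and $U_{\ell}\alpha_{\chi,\psi}^{\ell^t g,h} = \ell\,\alpha_{\chi,\psi}^{\ell^{t-1}g,h}$.

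Suppose $t=s$. Now $\tfrac{N}{\ell^t} = N_0$ is prime to $\ell$, the substitution above no longer preserves units, and this is what forces the correction term. I may assume $f_{\chi} \mid \tfrac{N}{\ell^s g} = N_0/g$, since otherwise Lemma \ref{eigrestrict} (using $\ell \nmid f_{\chi}$) makes both sides vanish; in particular $\chi^{-1}(\ell)$ is defined. Writing $\ell^{s-1}ga + k\tfrac{N}{\ell} = \ell^{s-1}(ga + kN_0)$, for each unit $a$ there is a unique $k_a \in \{0,\dots,\ell-1\}$ with $ga + k_a N_0 \equiv 0 \bmod \ell$. For $k \neq k_a$, the class $ga + kN_0$ is a unit, $\ell^{s-1}(ga+kN_0) \bmod N$ depends only on $ga+kN_0 \bmod \ell$ (its prime-to-$\ell$ part being $\ell^{s-1}ga$), and comparing $\ell$-adic contents it equals $\ell^{s-1}g$ times the unit $w$ determined by $w \equiv a \bmod N_0/g$ and $w \equiv g^{-1}(ga+kN_0) \bmod \ell$; as $k$ runs over $\{0,\dots,\ell-1\}\setminus\{k_a\}$, $w \bmod \ell$ runs over $\F_{\ell}^{\times}$. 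Expanding $e_{\theta}[\ell^{s-1}gw:h]$ by Lemma \ref{multconst}, pruning the character sum by Lemma \ref{eigrestrict}, and noting that summation over the $\ell-1$ values of $k$ annihilates every character ramified at $\ell$ by orthogonality on $\F_{\ell}^{\times}$, one obtains $\frac{1}{\varphi(N)} e_{\theta} \sum_a \chi^{-1}(a) \sum_{k \neq k_a} [\ell^{s-1}(ga+kN_0):h] = (\ell-1)\alpha_{\chi,\psi}^{\ell^{s-1}g,h}$. For $k = k_a$, one has $v_{\ell}(\ell^{s-1}(ga+k_aN_0)) \geq s$, so this symbol is $\ell^s g$ times the unit $a'$ with $a' \equiv \ell^{-1}a \bmod N_0/g$; since $\chi^{-1}(a) = \chi^{-1}(\ell)\chi^{-1}(a')$ and both $\chi^{-1}$ and $a \mapsto [\ell^s g a:h]$ factor through $a \bmod N_0/g$, summation gives $\frac{1}{\varphi(N)} e_{\theta} \sum_a \chi^{-1}(a)[\ell^{s-1}(ga+k_aN_0):h] = \chi^{-1}(\ell)\alpha_{\chi,\psi}^{\ell^s g,h}$. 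Adding the two contributions yields $(U_{\ell} - \chi^{-1}(\ell))\alpha_{\chi,\psi}^{\ell^s g,h} = (\ell-1)\alpha_{\chi,\psi}^{\ell^{s-1}g,h}$.

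The main obstacle is the $t=s$ case: tracking $\ell$-adic contents under the Chinese remainder decomposition and checking that the single bad index $k_a$ reassembles into precisely $\chi^{-1}(\ell)\alpha_{\chi,\psi}^{\ell^s g,h}$ while the other $\ell-1$ indices average to precisely $(\ell-1)\alpha_{\chi,\psi}^{\ell^{s-1}g,h}$. The delicate point is that primes dividing $g$ must not disturb those content computations — which is where $(g,h)=(1)$ and the hypothesis $f_{\chi} \mid \tfrac{N}{\ell^t}$ are used.
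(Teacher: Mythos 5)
Your proposal is correct and follows essentially the same route as the paper: expand $U_{\ell}\alpha_{\chi,\psi}^{\ell^t g,h}$ via \eqref{Uformula}, factor $\ell^{t-1}ga + k\tfrac{N}{\ell} = \ell^{t-1}g(a + kQ_{\ell^t g})$, and in the $t=s$ case split off the unique $k$ making the argument divisible by $\ell$ to produce the $\chi^{-1}(\ell)$ term. The only cosmetic difference is that for $t<s$ you prove all $\ell$ inner sums coincide by the unit-preserving shift $a \mapsto a + j\tfrac{N}{\ell^t}$, whereas the paper counts multiplicities directly; both reduce to the same reindexing justified by $f_{\chi}\mid \tfrac{N}{\ell^t}$.
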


\begin{proof}
	By Lemma \ref{eigrestrict}, we may suppose that $f_{\chi}$ divides $Q_{\ell^t g} = \frac{N}{\ell^t g}$.  We have
	\begin{equation} \label{U_formula}
		U_{\ell}\alpha_{\chi,\psi}^{\ell^tg,h} =  \sum_{k = 0}^{\ell-1}
		e_{\theta} \cdot \frac{1}{\varphi(N)}\sum_{a \in (\Z/N\Z)^{\times}} \chi^{-1}(a)
		[\ell^{t-1} g a + k \tfrac{N}{\ell}: h].
	\end{equation}
	Note that
	$$
		\ell^{t-1}ga + k \frac{N}{\ell} = \ell^{t-1}g\left(a + k\frac{N}{\ell^t g}\right),
	$$
	and $a+kQ_{\ell^t g}$ is prime to $Q_g$ except in the case $t = s$ 
	for the unique value of $k$ such that $\ell$ divides $a+kQ_{\ell^s g}$.  
	If $t < s$, we see that the $a+kQ_{\ell^tg}$ for a fixed $k$ run over all units in $\Z/Q_{\ell^{t-1}g}\Z$ with
	multiplicity $\ell^{t-1}g$ each.  As $f_{\chi} \mid Q_{\ell^t g}$, we therefore have
	$$
		\sum_{a \in (\Z/N\Z)^{\times}} \chi^{-1}(a)[\ell^{t-1} g a + k \tfrac{N}{\ell}: h]
		= \sum_{a \in (\Z/N\Z)^{\times}} \chi^{-1}(a)[\ell^{t-1}ga:h],
	$$
	proving the first statement.
	
	For $t = s$, the $a+kQ_{\ell^sg}$ for all $k$ and $a$ such that $\ell \nmid (a+kQ_{\ell^s g})$
	run over $(\Z/Q_{\ell^{s-1}g}\Z)^{\times}$ with multiplicity $(\ell-1)\ell^{s-1}g$, similarly providing
	the $(\ell-1)\alpha_{\chi,\psi}^{\ell^{s-1}g,h}$ in the second statement.  
	On the other hand, the $\ell^{-1}(a+kQ_{\ell^tg})$ for pairs $(k,a)$ with $\ell \mid (a+kQ_{\ell^st})$
	run over $(\Z/Q_{\ell^sg}\Z)^{\times}$ with constant multiplicity $\ell^s g$.  If $a' \in (\Z/N\Z)^{\times}$
	is such that $a' \equiv \frac{a+kQ_{\ell^sg}}{\ell} \bmod Q_{\ell^s g}$, then $\chi(\ell a') = \chi(a)$,
	so the sum of the corresponding terms is the desired value $\chi(\ell)^{-1}\alpha_{\chi,\psi}^{\ell^sg,h}$.
\end{proof}

In the following nearly immediate corollaries, if $H$ is cuspidal at zero, we implicitly exclude those elements $\alpha_{\chi,\psi}^{g,h}$ that occur with either $g$ or $h$ equal to $N$.

\begin{corollary}
	Suppose that $H$ has a $U_{\ell}$-operator that acts on $H^{\theta}$ as multiplication by an element of
	$\mc{O}^{\times}$.  Then 
	$H^{\theta}$
	is the $\mc{O}$-span of the elements of the form $\alpha_{\chi,\psi}^{g,h}$ and
 	$\alpha_{\chi,\psi}^{\ell^sg,h}$ with $\ell \nmid gh$.
\end{corollary}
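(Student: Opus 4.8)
The plan is to deduce the corollary from Corollary \ref{eigspan} by pushing each eigensymbol into the claimed spanning set with Proposition \ref{U-operator}, using that $U_{\ell}$ acts on $H^{\theta}$ as multiplication by some unit $\mu \in \mc{O}^{\times}$. By Corollary \ref{eigspan} it suffices to show that every $\alpha_{\chi,\psi}^{g,h}$, with $\chi \in X_N$ and $g,h$ relatively prime divisors of $N$, lies in the $\mc{O}$-span of the symbols $\alpha_{\chi',\psi'}^{g',h'}$ and $\alpha_{\chi',\psi'}^{\ell^s g',h'}$ with $\ell \nmid g'h'$. Write $g = \ell^a g_0$ and $h = \ell^b h_0$ with $\ell \nmid g_0 h_0$; coprimality of $g$ and $h$ forces $a = 0$ or $b = 0$, so exactly one slot carries an $\ell$-power, and $0 \le a, b \le s$.

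First I would handle the case $b = 0$, i.e.\ $\ell \nmid h$, and claim that $\alpha_{\chi,\psi}^{\ell^a g_0,h} \in \mc{O}\,\alpha_{\chi,\psi}^{g_0,h} + \mc{O}\,\alpha_{\chi,\psi}^{\ell^s g_0,h}$. This is immediate for $a \in \{0,s\}$. For $1 \le a \le s-1$ one may assume $\alpha_{\chi,\psi}^{\ell^a g_0,h} \neq 0$, so that Lemma \ref{eigrestrict} gives $f_{\chi} \mid Q_{\ell^a g_0}$ and hence $f_{\chi} \mid N/\ell^j$ for every $1 \le j \le a$. Thus the first formula of Proposition \ref{U-operator} applies at each such $j$, and using that $U_{\ell}$ acts as $\mu$ it reads $\alpha_{\chi,\psi}^{\ell^j g_0,h} = \ell\mu^{-1}\,\alpha_{\chi,\psi}^{\ell^{j-1} g_0,h}$; iterating from $j = a$ down to $j = 1$ yields $\alpha_{\chi,\psi}^{\ell^a g_0,h} = (\ell\mu^{-1})^a\,\alpha_{\chi,\psi}^{g_0,h}$, which proves the claim. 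Note that only the $t < s$ case of Proposition \ref{U-operator} is used, so no invertibility of $\ell - 1$ is needed and the coefficient $\ell\mu^{-1}$ need only lie in $\mc{O}$.

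For the case $a = 0$, $b > 0$, I would invoke the antisymmetry relation $\alpha_{\chi,\psi}^{g,h} = -\chi(-1)\alpha_{\psi,\chi}^{h,g}$, which is an identity inside the same eigenspace $H^{\theta}$ (since $\theta = \chi\psi = \psi\chi$), so $U_{\ell}$ still acts there by $\mu$. It turns $\alpha_{\chi,\psi}^{g,\ell^b h_0}$ into a scalar times $\alpha_{\psi,\chi}^{\ell^b h_0,g}$, an eigensymbol with the $\ell$-power in the first slot and $\ell \nmid h_0 g$; the previous paragraph, applied with $(\psi,h_0)$ in the role of $(\chi,g_0)$ and $g$ in the role of $h$, places it in $\mc{O}\,\alpha_{\psi,\chi}^{h_0,g} + \mc{O}\,\alpha_{\psi,\chi}^{\ell^s h_0,g}$. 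Here the second summand's generator is already of the listed form $\alpha_{\chi',\psi'}^{\ell^s g',h'}$ with $\ell \nmid g'h'$, while antisymmetry rewrites $\alpha_{\psi,\chi}^{h_0,g}$ as a scalar times the allowed generator $\alpha_{\chi,\psi}^{g,h_0}$ (with $\ell \nmid gh_0$). Together with Corollary \ref{eigspan}, this completes the argument.

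The step requiring the most care is the bookkeeping rather than any computation: one must check that the conductor hypothesis $f_{\chi}\mid N/\ell^t$ of Proposition \ref{U-operator} holds automatically on every nonzero eigensymbol (via Lemma \ref{eigrestrict}), and must keep track, when the $\ell$-power sits in the second slot, of the fact that after the antisymmetry substitution it genuinely becomes one of the listed generators $\alpha_{\chi',\psi'}^{\ell^s g',h'}$, with a possibly different first character $\chi'$. With those points checked, the corollary is essentially immediate from Proposition \ref{U-operator}.
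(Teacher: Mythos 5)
Your proof is correct and is exactly the ``nearly immediate'' argument the paper intends (it offers no written proof): reduce via Corollary \ref{eigspan}, collapse intermediate $\ell$-powers in the first slot by iterating the $t<s$ case of Proposition \ref{U-operator} against the unit action of $U_{\ell}$, and transfer $\ell$-powers out of the second slot by antisymmetry. Your attention to the conductor hypothesis via Lemma \ref{eigrestrict} and to the character swap under antisymmetry is exactly the right bookkeeping.
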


Noting that $\ell - 1 \in \zp^{\times}$, and that if $\chi(\ell) \neq 1$, then $1-\chi^{-1}(\ell)$ is a unit, we have the 
following.

\begin{corollary} \label{triv_ell_act}
	Suppose that $H$ has a trivial $U_{\ell}$-operator (i.e., $U_{\ell}-1$ acts as zero on $H$). 
	Then $H^{\theta}$ is the $\mc{O}$-span of the elements of the form $\alpha_{\chi,\psi}^{g,h}$ 
	with $\ell \nmid gh$ and $\alpha_{\chi,\psi}^{\ell^sg,h}$ with $\ell \nmid gh$ and $\chi(\ell) = 1$.
	Moreover, if $\ell \nmid f_{\chi}gh$ and $\chi(\ell) = 1$, then $\alpha_{\chi,\psi}^{g,h} = 0$
	(so long as $\ell^s g \neq N$ if $H$ is cuspidal at zero).
\end{corollary}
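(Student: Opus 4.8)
The plan is to derive the statement directly from Proposition~\ref{U-operator}, applied with the $U_\ell$-operator taken to be the identity map $\mr{id}$, together with the unit observations recorded just before the corollary: $\ell-1\in\zp^\times$ (this uses $p\nmid\varphi(M)$), and $1-\chi^{-1}(\ell)\in\mc{O}^\times$ whenever $\chi(\ell)\neq 1$. I would also note that $\ell^{s-1}\in\zp^\times$, which is clear when $\ell\neq p$, while if $\ell=p$ then $s=1$ and $\ell^{s-1}=1$; in particular $(\ell-1)\ell^{s-1}\in\mc{O}^\times$ in every case.

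First I would invoke Corollary~\ref{eigspan}: $H^\theta$ is the $\mc{O}$-span of all $\alpha_{\chi,\psi}^{g,h}$ with $\chi\in X_N$ and $g,h$ relatively prime divisors of $N$ (omitting those with $g$ or $h$ equal to $N$ when $H$ is cuspidal at zero), so it suffices to express each such symbol with $\ell\mid gh$ in terms of the asserted generators. Since $g$ and $h$ are coprime, at most one is divisible by $\ell$; using the antisymmetry $\alpha_{\chi,\psi}^{g,h}=-\chi(-1)\alpha_{\psi,\chi}^{h,g}$ --- legitimate since $\psi\in X_N$ and $\theta\psi^{-1}=\chi$, so $\alpha_{\psi,\chi}^{h,g}$ is again an eigensymbol --- I would reduce to the case $\ell\mid g$, writing $g=\ell^t g_0$ with $1\le t\le s$ and $\ell\nmid g_0 h$. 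If the symbol is zero there is nothing to prove; otherwise Lemma~\ref{eigrestrict} forces $f_\chi\mid Q_{\ell^t g_0}$, hence $f_\chi\mid N/\ell^{t'}$ for all $1\le t'\le t$, so that Proposition~\ref{U-operator} is applicable at every such level.

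With $U_\ell=\mr{id}$, the $t'<s$ case of Proposition~\ref{U-operator} reads $\alpha_{\chi,\psi}^{\ell^{t'}g_0,h}=\ell\,\alpha_{\chi,\psi}^{\ell^{t'-1}g_0,h}$; iterating these relations shows that for $t<s$ one has $\alpha_{\chi,\psi}^{\ell^t g_0,h}=\ell^t\alpha_{\chi,\psi}^{g_0,h}$, a first-type generator, while for $t=s$ the $t'=s$ relation, after this same reduction of its right-hand side, gives
$$
	(1-\chi^{-1}(\ell))\,\alpha_{\chi,\psi}^{\ell^s g_0,h} = (\ell-1)\ell^{s-1}\,\alpha_{\chi,\psi}^{g_0,h}.
$$
If $\chi(\ell)\neq 1$, then $1-\chi^{-1}(\ell)\in\mc{O}^\times$ and this writes $\alpha_{\chi,\psi}^{\ell^s g_0,h}$ as an $\mc{O}$-multiple of the first-type generator $\alpha_{\chi,\psi}^{g_0,h}$; if $\chi(\ell)=1$ the left-hand side vanishes and, since $(\ell-1)\ell^{s-1}\in\mc{O}^\times$, we obtain $\alpha_{\chi,\psi}^{g_0,h}=0$, while $\alpha_{\chi,\psi}^{\ell^s g_0,h}$ is itself a second-type generator and is kept. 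This proves the spanning claim; when $H$ is cuspidal at zero and $\ell^s g_0=N$ (forcing $h=1$) the symbol $\alpha_{\chi,\psi}^{\ell^s g_0,h}$ is $0$ by convention, so nothing is lost. For the last assertion, if $\ell\nmid f_\chi gh$ and $\chi(\ell)=1$ then $f_\chi\mid N/\ell^s$ automatically, the displayed identity holds with $g_0$ replaced by $g$, and its vanishing left-hand side yields $\alpha_{\chi,\psi}^{g,h}=0$; the restriction $\ell^s g\neq N$ in the cuspidal-at-zero case is precisely what allows Proposition~\ref{U-operator} to be invoked at level $s$.

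I expect the proof to be routine bookkeeping with no genuine obstacle. The two places that call for a little care are keeping track, across the antisymmetry step, of which of $\chi,\psi$ the condition ``$\chi(\ell)=1$'' is attached to (after the swap it is on $\psi$, but the resulting generator $\alpha_{\psi,\chi}^{\ell^s h_0,g}$ is literally of the form listed), and the cuspidal-at-zero boundary where eigensymbols with an argument equal to $N$ vanish by convention and the hypothesis $\ell^s g\neq N$ of Proposition~\ref{U-operator} must be respected.
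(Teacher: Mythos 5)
Your proof is correct and is precisely the derivation the paper intends: the corollary is stated as ``nearly immediate'' from Proposition~\ref{U-operator} with $U_\ell=1$, together with the observations that $\ell-1\in\zp^{\times}$ and that $1-\chi^{-1}(\ell)$ is a unit when $\chi(\ell)\neq 1$. Your additional bookkeeping (iterating the $t<s$ relation, the antisymmetry swap when $\ell\mid h$, and the cuspidal-at-zero boundary) fills in exactly the routine details the paper omits.
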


We may now prove the main theorem of this section.

\begin{theorem} \label{U_theorem}
	Suppose that $f = f_{\theta\omega^{-2}}$ is divisible by $M$.  
	If any of the following conditions hold, then $H^{\theta} = C^{\theta}$:
	\begin{enumerate}
		\item[i.] $p \ge 5$ and $f = N$,
		\item[ii.] $p \ge 5$, $f = M$, $\theta|_{(\Z/M\Z)^{\times}}(p) \neq 1$, and 
		there is a trivial $U_p$-operator on 
		$H$,
		\item[iii.] for some prime $\ell$ dividing $M$, there is a trivial $U_{\ell}$-operator on $H$
		(and $f = N$ and $H$ is cuspidal at zero if $p = 3$).
	\end{enumerate}
\end{theorem}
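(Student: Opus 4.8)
The plan is to read the conclusion off from Theorem \ref{cdgenerate} and then use the $U_{\ell}$-operator identities of Proposition \ref{U-operator} and Corollary \ref{triv_ell_act} to absorb the eigensymbols left over; recall that $N = Mp$ is in force throughout this subsection. Condition (i) is literally Theorem \ref{cdgenerate}a, which moreover settles (iii) whenever $p \ge 5$ and $f = N$, so two scenarios remain. If $p \ge 5$ and $f = M$ --- the case of (ii), and the only subcase of (iii) for $p \ge 5$ not yet handled --- then $M > 1$, so Lemma \ref{M_not_1_case} gives $\beta := \alpha_{\omega^2,\theta\omega^{-2}} \in A \subseteq C^{\theta}$ and Theorem \ref{cdgenerate}b gives $H^{\theta} = C^{\theta} + \mc{O}\beta_{1,p}$, where $\beta_{g,h} := \alpha_{\omega^2,\theta\omega^{-2}}^{g,h}$; so it suffices to prove $\beta_{1,p} \in C^{\theta}$. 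If $p = 3$ --- so (iii) applies with $f = N$ and $H$ cuspidal at zero --- then $\omega^2 = 1$, and Theorem \ref{cdgenerate}c, together with the vanishing of $\alpha_{1,\theta}^{N,1}$ on cuspidal-at-zero symbols, gives $H^{\theta} = C^{\theta} + \mc{O}\alpha_{1,\theta}$; so it suffices to prove $\alpha_{1,\theta} \in C^{\theta}$.

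I would dispose of the two easy cases first. For $p = 3$: the symbol $\alpha_{1,\theta} = \alpha_{1,\theta}^{1,1}$ has $\chi = 1$, so $f_{\chi} = 1$ and $\chi(\ell) = 1$ for the prime $\ell \mid M$ carrying the trivial $U_{\ell}$-operator, while $\ell^{s} \ne N$ since $p \mid N$ and $\ell \ne p$; hence the last assertion of Corollary \ref{triv_ell_act} gives $\alpha_{1,\theta} = 0$. For (ii): the antisymmetry relation gives $\beta_{1,p} = -\omega^2(-1)\alpha_{\theta\omega^{-2},\omega^2}^{p,1} = -\alpha_{\theta\omega^{-2},\omega^2}^{p,1}$ and $\beta = -\alpha_{\theta\omega^{-2},\omega^2}^{1,1}$, while Proposition \ref{U-operator} applied to $\alpha_{\theta\omega^{-2},\omega^2}^{p,1}$ in the case $t = s = 1$ --- legitimate since $f_{\theta\omega^{-2}} = M$ divides $N/p = M$, and $p \ne N$ because $M > 1$ --- together with $U_p = 1$ yields $\bigl(1 - (\theta\omega^{-2})^{-1}(p)\bigr)\beta_{1,p} = (p-1)\beta$. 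Since $\omega$ restricts trivially to $(\Z/M\Z)^{\times}$, the primitive character $\theta\omega^{-2}$ of conductor $M$ restricts there to $\theta$, so $(\theta\omega^{-2})^{-1}(p) = \theta^{-1}|_{(\Z/M\Z)^{\times}}(p)$ is a root of unity $\ne 1$ of order dividing $\varphi(M)$, hence prime to $p$; thus $1 - (\theta\omega^{-2})^{-1}(p) \in \mc{O}^{\times}$ and $\beta_{1,p} \in \mc{O}\beta \subseteq C^{\theta}$.

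The substantial case is (iii) with $p \ge 5$ and $f = M$. Fix a prime $\ell \mid M$ with a trivial $U_{\ell}$-operator, and let $s \ge 1$ be the $\ell$-adic valuation of $N$ (equivalently of $M$). I would apply Proposition \ref{U-operator} to $\alpha_{\omega^2,\theta\omega^{-2}}^{\ell^{s},p}$ in the case $t = s$: this is allowed because $f_{\omega^2} = p$ divides $N/\ell^{s} = (M/\ell^{s})p$ and $\ell^{s} \ne N$. With $U_{\ell} = 1$ it gives $(1 - \omega^{-2}(\ell))\beta_{\ell^{s},p} = (\ell - 1)\beta_{\ell^{s-1},p}$. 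On the other hand, Lemma \ref{elt_relns}a together with $\beta \in A$ gives $\beta_{\ell^{j},p} \equiv \omega^2(\ell)^{j}\beta_{1,p} \bmod A$ for $0 \le j \le s$. Substituting and dividing by the unit $\omega^2(\ell)^{s-1}$ leaves $(\omega^2(\ell) - \ell)\beta_{1,p} \equiv 0 \bmod A$. Now $\ell \ne p$ forces $\ell \not\equiv 0 \bmod p$, and $p \nmid \varphi(M)$ with $\ell \mid M$ forces $\ell \not\equiv 1 \bmod p$; since $\omega^2(\ell) \equiv \ell^2 \bmod p$, the scalar $\omega^2(\ell) - \ell \equiv \ell(\ell - 1)$ is a unit in $\mc{O}$, whence $\beta_{1,p} \in A \subseteq C^{\theta}$ and $H^{\theta} = C^{\theta}$.

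The reductions and the $p = 3$ case are routine; the step that requires genuine care is this last one, where the $t = s$ case of Proposition \ref{U-operator} must be paired with the congruences among the $\beta_{g,p}$ from Lemma \ref{elt_relns}a. Ultimately everything hinges on the scalars $1 - (\theta\omega^{-2})^{-1}(p)$ (in (ii)) and $\omega^2(\ell) - \ell$ (in (iii)) being units in $\mc{O}$, which is exactly where the hypotheses $\theta|_{(\Z/M\Z)^{\times}}(p) \ne 1$ and $p \nmid M\varphi(M)$ get used.
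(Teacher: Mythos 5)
Your proof is correct and follows essentially the same route as the paper: reduce via Theorem \ref{cdgenerate} to showing $\beta_{1,p}\in A$ (resp.\ $\beta\in A$ for $p=3$), then combine Proposition \ref{U-operator}, Lemma \ref{elt_relns}a, and Lemmas \ref{M_not_1_case}/\ref{M_1_case} with Corollary \ref{triv_ell_act}. Your treatment of case (iii) is in fact slightly cleaner than the paper's, since working uniformly at $t=s$ produces the scalar $\omega^2(\ell)-\ell\equiv\ell(\ell-1)$, which is a unit outright and so avoids the paper's case split on $\ell^2\mid M$ versus $\ell\,\Vert\, M$ and its separate handling of $\ell\equiv -1\bmod p$.
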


\begin{proof}
	By Theorem \ref{cdgenerate}, in the case $p \ge 5$ (resp., $p = 3$),
	we need only show that  $\beta_{1,p} \in A$ if $f = M$ (resp., $\beta \in A$).  
	In particular, we already have (i).
	If $\theta|_{(\Z/M\Z)^{\times}}(p) \neq 1$ and $U_p$ acts trivially, 
	we have $\beta_{1,p} \in A + \mc{O}\beta$ by Proposition \ref{U-operator}
	for $U_p$.  Since $\beta \in A$ by Lemmas \ref{M_not_1_case} and \ref{M_1_case},
	we have (ii).  
	
	If $p \ge 5$ and $f = M$ (resp., $p = 3$ and $f = N$), then for $h = p$ (resp., $h = 1$), 
	Lemma \ref{elt_relns} tells us that
	$$
		\omega(\ell)^{-2}\beta_{\ell,h}-\beta_{1,h} 
		\in A
	$$
	for any prime $\ell$ dividing $M$.  If $\ell^2$ divides $M$ (resp., $\ell$ exactly divides $M$ and
	$\ell \not\equiv -1 \bmod p$) and $U_{\ell}$ acts trivially, then Proposition \ref{U-operator} yields that
	$w\beta_{1,h} \in A$,
	where $w \equiv \frac{1-\ell}{\ell} \bmod p$ (resp., $w \equiv \frac{-\ell}{\ell+1} \bmod p$).
	Finally, if $\ell \equiv -1 \bmod p$ exactly divides $M$, then since $\omega^2(\ell) = 1$ and 
	$U_{\ell}$ acts trivially, Corollary \ref{triv_ell_act} tells us that
	$\beta_{1,h} = 0$.
\end{proof}

We can treat cuspidal-at-zero symbols in the case $M = 1$ and $\theta = \omega^2$ by using a $T_2$-operator.

\begin{definition}
	Suppose that $N$ is odd.  A {\em $T_2$-operator} on $H$ is a $\zp[\Delta]$-linear endomorphism of $H$
	such that
	\begin{equation} \label{T2dual}
		\langle 2 \rangle T_2[u:v] = [2u:v] + [2u:u+v]  + [u+v:2v] +  [u:2v] 
	\end{equation}
	for all $u, v \in \Z/N\Z$ with $(u,v) = (1)$ (and $u$, $v$, and $u+v$ nonzero if
	$H$ is cuspidal at zero).  
\end{definition}

\begin{proposition} \label{T_theorem}
	Suppose that $M = 1$, that $\theta = \omega^2$, and that $H$ is cuspidal at zero.
	If $T_2$ acts as $1+2\omega^{-2}(2)$ on $H^{\omega^2}$, then $H^{\omega^2} = C^{\omega^2}$.
\end{proposition}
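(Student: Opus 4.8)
The plan is to follow the reduction carried out in Section~\ref{generation} down to a single eigensymbol, and then to supply the one relation that \eqref{simplereln}--\eqref{sumreln} fail to provide when $\theta=\omega^2$ by feeding in the hypothesis on $T_2$. Since $M=1$ we have $N=p$, and since $H$ is cuspidal at zero the only eigensymbols $\alpha^{g,h}_{\chi,\psi}$ with $\chi\in X_N$ and $\psi=\theta\chi^{-1}=\omega^2\chi^{-1}$ that need not vanish are the $\alpha_{\chi,\psi}=\alpha^{1,1}_{\chi,\psi}$: indeed $\alpha^{p,1}_{\chi,\psi}=0$ by convention and $\alpha^{1,p}_{\chi,\psi}=-\chi(-1)\alpha^{p,1}_{\psi,\chi}=0$. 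By Corollary~\ref{eigspan} these span $H^{\omega^2}$ over $\mc{O}$, and by Remark~\ref{eigokay} all of them with $\chi\notin\{\omega^2,1\}$ lie in $C^{\omega^2}$; let $A\subseteq C^{\omega^2}$ be their $\mc{O}$-span. Setting $\beta=\alpha_{\omega^2,1}$ and noting $\alpha_{1,\omega^2}=-\beta$ by antisymmetry of eigensymbols, one is reduced to proving $\beta\in A$. This is the argument for $p\ge 5$; the case $p=3$ is degenerate and I treat it at the end.

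The extra relation comes from evaluating $\langle 2\rangle T_2\,e_{\omega^2}[u:v]$ in two ways, for a pair $u,v\in(\Z/p\Z)^\times$ with $u+v\ne 0$ (so that every entry that occurs below is a nonzero unit). Since $e_{\omega^2}[u:v]\in H^{\omega^2}$, on which $T_2$ acts by $1+2\omega^{-2}(2)$, and $e_{\omega^2}\langle 2\rangle=\omega^2(2)e_{\omega^2}$, this element equals $(1+2\omega^{-2}(2))\,\omega^2(2)\,e_{\omega^2}[u:v]=(\omega^2(2)+2)\,e_{\omega^2}[u:v]$; on the other hand \eqref{T2dual} rewrites it as $e_{\omega^2}\big([2u:v]+[2u:u+v]+[u+v:2v]+[u:2v]\big)$. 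Expanding every term by Lemma~\ref{multconst} (with $g=h=1$) and comparing coefficients of the $\alpha_{\chi,\psi}$, all contributions with $\chi\notin\{\omega^2,1\}$ lie in $A$; a short computation gives that the $\chi=\omega^2$ term contributes $\big(\omega^2(u)(1-\omega^2(2))-\omega^2(u+v)\big)\beta$ and the $\chi=1$ term contributes $-\big(\omega^2(v)(1-\omega^2(2))-\omega^2(u+v)\big)\beta$, so that altogether
\begin{equation*}
	\big(1-\omega^2(2)\big)\big(\omega^2(u)-\omega^2(v)\big)\,\beta\in A .
\end{equation*}

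To conclude I would take $u=1$, $v=2$, which is admissible for $p\ge 5$ since then $1,2,3$ are nonzero modulo $p$; the coefficient becomes $(1-\omega^2(2))^2$, and as $\omega^2(2)\equiv 4\bmod p$ with $p\nmid 3$ this is a unit in $\zp\subseteq\mc{O}$, whence $\beta\in A\subseteq C^{\omega^2}$ and $H^{\omega^2}=C^{\omega^2}$. For $p=3$ one has $\theta=\omega^2=\mathbf{1}$ and $\Delta$ trivial, so $e_{\omega^2}=1$ and $H^{\omega^2}=H\otimes_{\zp}\mc{O}$; relations \eqref{simplereln} and \eqref{sumreln} collapse $H$ to $\zp[1:1]$ with $3[1:1]=0$, and the elementary identity ${}_{5,5}[1:1]=676\,[1:1]=[1:1]$ shows $C=H$, so again $H^{\omega^2}=C^{\omega^2}$.

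The main obstacle — and the whole reason $T_2$ is introduced — is that \eqref{simplereln}--\eqref{sumreln} are exactly insufficient when $\theta=\omega^2$: running the same bookkeeping with the sum relation \eqref{sumreln} in place of the $T_2$ identity makes the net coefficient of $\beta$ vanish, so one genuinely needs the $T_2$-eigenvalue as new input. The one delicate point to watch is that the coefficient $(1-\omega^2(2))(\omega^2(u)-\omega^2(v))$ surviving the antisymmetrization is a unit for an admissible choice of $u,v$; this succeeds for $p\ge 5$ but degenerates at $p=3$ (where $\omega^2$ is trivial), which is what forces the separate elementary treatment there.
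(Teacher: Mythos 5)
Your proof is correct and follows essentially the same route as the paper: both reduce to showing $\beta=\alpha_{\omega^2,1}\in A$ and extract the key relation from the $T_2$-eigenvalue hypothesis via Lemma~\ref{multconst}, arriving at a coefficient of $\beta$ that is $\equiv 9 \bmod p$ (the paper normalizes $a+b=1$ and takes $a=-1$, where you take $(u,v)=(1,2)$, an equivalent choice). The only divergence is at $p=3$, where the paper simply asserts that $H$ is trivial while you verify $C=H$ by a direct computation with ${}_{5,5}[1:1]$; your check is sound.
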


\begin{proof}
	Since $H$ is cuspidal at zero, it clearly suffices show that $\beta \in A$.
	Let $a, b \in (\Z/N\Z)^{\times}$ be such that $a+b = 1$.
	Note that $e_{\omega^2}(\langle 2 \rangle T_2[a:b] - 2[a:b] - [2a:2b]) = 0$ by our assumption on $T_2$.
	Applying Lemma \ref{multconst}  
	to \eqref{T2dual} with $u = a$ and $v = b$, we therefore have that
	$$
		\sum_{\chi \in X_N} (2\chi(a)\psi(b) + \chi(2a)\psi(2b) -\chi(2a)\psi(b) - \chi(2a) 
		- \psi(2b) - \chi(a)\psi(2b))
		\alpha_{\chi,\psi} = 0,
	$$
	where as before $\psi$ is taken to be $\theta\chi^{-1}$.
	Since $\theta = \omega^2$, we need only see that the difference 
	$$
		(\omega^2(a)-\omega^2(2a)-1)
		- (\omega^2(b)-\omega^2(2b)-1)
		\equiv 3((1-a)^2-a^2)  \bmod p
	$$
	of the coefficients of $\alpha_{\omega^2,1} = \beta$
	and $\alpha_{1,\omega^2} = -\beta$ can be nonzero modulo $p$.  For $p \ge 5$, this 
	occurs for $a = -1$, and for $p = 3$, the group $H$ is trivial.
\end{proof}

\begin{remark}
	The equality in Proposition \ref{T_theorem} does not always hold without the cuspidal-at-zero condition,
	even on ``Eisenstein quotients'' involving conditions for all Hecke operators, rather than just $T_2$.
\end{remark}

\section{Application to modular curves} \label{applications}

In Section \ref{homology}, we first apply the abstract study of Section \ref{generate} to prove Theorems \ref{mainthm} and \ref{eisthm} on the generation of relative homology groups of modular curves and their Eisenstein parts by the modifications of Manin symbols that we have termed $(c,d)$-symbols.  An application of Nakayama's lemma provides analogous results, Propositions \ref{ordcoh} and \ref{spancoh}, for inverse limits of the ordinary parts and Eisenstein components of these relative homology groups up a tower of modular curves of increasing $p$-power dividing the level.  

As outlined in the introduction, we apply these generation results in Section \ref{zeta} to prove the integrality of two types of maps taking compatible systems of Manin symbols to compatible systems of zeta elements: see Theorems \ref{zinfty} and \ref{zsharp}.  These maps, first defined in \cite{fk-proof}, a priori only take values in a tensor product of a cohomology group with the quotient field of an Iwasawa algebra.  However, their values on compatible systems of $(c,d)$-symbols lie in the (integral) image of the actual cohomology group in the (rational) tensor product.  By the generation results of Section \ref{generate}, the theorems are then reduced to the torsion-freeness of the Iwasawa modules that are the cohomology groups in question, and this is the content of Lemmas \ref{Iwtorsfree} and \ref{torsfree}.

\subsection{Homology of modular curves} \label{homology}

We explain how the results of Section \ref{generate} apply to the homology groups of modular curves.  For this, fix an odd prime $p$, an integer $M$ with $p \nmid M\varphi(M)$, and $r \ge 1$.  Set $N_r = Mp^r$, and suppose additionally that $N_r \ge 4$.  We set $N = N_1$.  Let $G_r = (\Z/N_r\Z)^{\times}/\langle -1 \rangle$, let $\Delta = G_1$, and let $\Gamma_r = \ker(G_r \to \Delta)$.  Set $\mc{O} = \zp[\mu_{\varphi(M)}]$.

Let $C_1(N_r) = \Gamma_1(Np^r) \backslash \mb{P}^1(\Q)$ denote the set of cusps in $X_1(N_r)$.  The relative homology group
$$
	\tilde{H}_r = H_1(X_1(N_r),C_1(N_r),\Z)
$$ 
is generated by the classes $\{ x \to y \}_r$  of images of geodesics in the extended upper half-plane 
from a cusp $x \in \mb{P}^1(\Q)$ to a cusp $y \in \mb{P}^1(\Q)$.

\begin{definition}
	For $u, v \in \Z/N_r\Z$ with $(u,v) = (1)$, choose 
	$\left(\begin{smallmatrix} a&b \\ c&d \end{smallmatrix}\right) \in \SL_2(\Z)$
	with $(u,v) = (c,d) \bmod N_r\Z^2$.  The {\em Manin symbol} of level $N_r$ attached to the pair $(u,v)$ is
	$$
		[u:v]_r = \left\{ \frac{-d}{bN_r} \to \frac{-c}{aN_r} \right\}_r.
	$$
\end{definition}

\begin{remark} \label{modified}
	Our ``Manin symbols'' are actually Manin's after application of the Atkin-Lehner involution of level $N_r$.
	As a result, we have 
	$$ 
		\langle a \rangle_r^{-1}[u:v]_r = [au:av]_r
	$$
	for $a \in G_r$, where $\langle a \rangle_r$ denotes the diamond operator acting on $\tilde{H}_r$.
\end{remark}

It is well-known that $\tilde{H}_r$ is a space of modular symbols of level $N_r$ in the sense previously defined.  In fact, we have the following, which follows easily from \cite[Theorem 1.9]{manin}.

\begin{theorem}[Manin] \label{maninthm}
 	 The space $\tilde{H}_r$ has a presentation as a $\zp[G_r]$-module on the generators $[u:v]_r$ with 
	 relations as in \eqref{simplereln}, \eqref{sumreln}, and \eqref{diamondreln}, where $\langle a \rangle$ 
	 in \eqref{diamondreln} for $a \in (\Z/N_r\Z)^{\times}$ is taken to be the inverse diamond 
	 operator $\langle a \rangle_r^{-1}$ acting on $\tilde{H}_r$.
\end{theorem}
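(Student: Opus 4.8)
The plan is to transport Manin's presentation of relative homology across the Atkin--Lehner twist relating our $[u:v]_r$ to Manin's symbols. Recall that \cite[Theorem 1.9]{manin} presents $\tilde{H}_r$ (with $\Z$-coefficients, hence with $\zp$-coefficients after the flat base change) as the module generated by symbols $\xi(g) = \{g(0) \to g(\infty)\}_r$ indexed by right cosets $g \in \Gamma_1(N_r)\backslash\SL_2(\Z)$, with $\xi(-g) = \xi(g)$ as $-g$ defines the same geodesic, subject to the relations $\xi(g) + \xi(g\sigma) = 0$ for $\sigma = \left(\begin{smallmatrix} 0 & -1 \\ 1 & 0 \end{smallmatrix}\right)$ and $\xi(g) + \xi(g\tau) + \xi(g\tau^2) = 0$ for the order-three element $\tau = \left(\begin{smallmatrix} 0 & -1 \\ 1 & -1 \end{smallmatrix}\right)$. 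Sending $g = \left(\begin{smallmatrix} a & b \\ c & d \end{smallmatrix}\right)$ to its bottom row $(c,d) \bmod N_r$ sets up a bijection between $\Gamma_1(N_r)\backslash\SL_2(\Z)$ and the set of pairs $(u,v) \in (\Z/N_r\Z)^2$ with $(u,v) = (1)$, since $\SL_2(\Z) \to \SL_2(\Z/N_r\Z)$ is surjective and $\Gamma_1(N_r)$ is the preimage of the unipotent upper-triangular subgroup; the ambiguity $\xi(-g) = \xi(g)$ translates into the relation $[u:v]_r = [-u:-v]_r$.

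First I would record the geodesic identity $[u:v]_r = (w_{N_r})_* \xi(g)$, where $w_{N_r} \colon z \mapsto -1/(N_r z)$ is the Fricke involution and $g$ has bottom row $\equiv (u,v)$: indeed $w_{N_r}$ carries $\{g(0) \to g(\infty)\}_r = \{\tfrac{b}{d} \to \tfrac{a}{c}\}_r$ to $\{\tfrac{-d}{bN_r} \to \tfrac{-c}{aN_r}\}_r$, which is the definition of $[u:v]_r$. Since $(w_{N_r})_*$ is a linear automorphism of $\tilde{H}_r$, it carries the above presentation verbatim onto one for the family $\{[u:v]_r\}$: these span $\tilde{H}_r$, and their module of relations is the image of Manin's. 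It then remains to rewrite Manin's two families of relations in the desired form. A direct matrix computation gives that $g\sigma$, $g\tau$, and $g\tau^2$ have bottom rows, up to an overall sign, $(d,-c)$, $(d,-c-d)$, and $(c+d,-c)$. Hence $\xi(g) + \xi(g\sigma) = 0$ reads $[u:v]_r = -[v:-u]_r$, which combined with $[u:v]_r = [-u:-v]_r$ is exactly \eqref{simplereln}; and $\xi(g) + \xi(g\tau) + \xi(g\tau^2) = 0$ reads $[u:v]_r + [v:-u-v]_r + [u+v:-u]_r = 0$, which upon applying the relation just obtained to the last two summands (so that $[v:-u-v]_r = -[u+v:v]_r$ and $[u+v:-u]_r = -[u:u+v]_r$) becomes \eqref{sumreln}.

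Finally, to upgrade this presentation from one of abelian groups to one of $\zp[G_r]$-modules, one needs the action of the diamond operators on the generators, and the identity $\langle a \rangle_r^{-1}[u:v]_r = [au:av]_r$ is precisely the content of Remark \ref{modified}; it follows from the fact that $\langle a \rangle_r$ sends the bottom row $(c,d)$ of a coset representative to $(ac,ad)$ --- hence acts on Manin's untwisted symbols by $[u:v] \mapsto [au:av]$ --- together with the standard fact that conjugation by the Fricke involution sends $\langle a \rangle_r$ to $\langle a \rangle_r^{-1}$. Combining the three ingredients yields the presentation asserted in the theorem. The only genuine labor is the bookkeeping in the middle paragraph: keeping the orientation conventions for the boundary classes $\{x \to y\}_r$ consistent through the matrix products, and confirming that the Fricke twist flips the diamond operators rather than fixing them. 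As neither point is deep, this is indeed why the result ``follows easily'' from \cite[Theorem 1.9]{manin}.
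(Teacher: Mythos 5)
Your proposal is correct and follows the same route the paper intends: the paper simply cites \cite[Theorem 1.9]{manin} together with Remark \ref{modified} (the Atkin--Lehner/Fricke twist, which is what inverts the diamond action), and your write-up just carries out the coset bookkeeping and the translation of the $\sigma$- and $\tau$-relations into \eqref{simplereln} and \eqref{sumreln} explicitly. The matrix computations and the identification of $\Gamma_1(N_r)\backslash\SL_2(\Z)$ with bottom rows all check out, so this is a faithful expansion of the paper's one-line justification.
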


Let 
$$
	H_r = H_1(X_1(N_r),C_1^0(N_r),\zp),
$$
where $C_1^0(N_r) \subset C_1(N_r)$ is the subset of cusps not lying over the zero cusp of the modular
curve $X_0(N_r)$.  This is the space of cuspidal-at-zero symbols in $\tilde{H}_r$.
Let $\tilde{C}_r$ and $C_r$ denote the $\zp$-submodules of $\tilde{H}_r$ and $H_r$, respectively, generated by $(c,d)$-symbols ${}_{c,d}[u:v]_r$ as we vary $c$ and $d$ (with $u, v \neq 0$ for $H_r$).

Fix a $p$-adic character $\theta$ of $\Delta$ as before.  Theorem \ref{cdgenerate} applies directly to $H_1$ and $\tilde{H}_1$ for the symbols $[u:v]_1$, yielding Theorem \ref{mainthm} and its analogue for $H_1$.  The proof of Theorem \ref{eisthm} is only slightly more involved.

\begin{proof}[Proof of Theorem \ref{eisthm} and Remark \ref{mainremark}]
	The Hecke operators $U_{\ell}$ for $\ell \mid N$ (resp., $T_2$ for $N$ odd) on $\tilde{H}_1$ satisfy 
	equation \eqref{Uformula} 
	(resp., \eqref{T2dual}) by \cite[p.\ 264]{sh-Lfn} (resp., \cite[Prop.~20]{merel}, after taking into account the modification
	to said formula incurred by Remark \ref{modified}).  
	Let $I_1$ be the Eisenstein ideal of the introduction, generated by 
	by $U_{\ell} - 1$ for $\ell \mid N$ and $T_{\ell}-1-\ell\langle \ell \rangle$ for  
	$\ell \nmid N$.
	To prove Theorem \ref{eisthm}, we may take $\tilde{H}_1/I_1\tilde{H}_1$
	as $H$ in Theorem \ref{U_theorem}.  For Remark \ref{mainremark}, we may take 
	$H_1/I_1H_1$ as $H$ in Theorem \ref{U_theorem} and
	in Proposition \ref{T_theorem} if $\theta = \omega^2$.	
	Consequently, $H^{(\theta)}$ is generated by the images of the ${}_{c,d}[u:v]_1$.  
	Applying Nakayama's lemma, we obtain Theorem \ref{eisthm} for $\tilde{H}_1$, and its analogue for $H_1$.
\end{proof}

Let $G = \invlim_r G_r$ and $\Gamma = \invlim_r \Gamma_r$, and set $\tilde{\La} = \zp\ps{G}$ and 
$\La = \zp\ps{\Gamma}$.  Let us use $\ord$ to denote the ordinary part of a Hecke algebra or module, the maximal 
direct summand on which (a compatible system of Hecke operators) $U_p$ acts invertibly.   
We will identify elements with their projections to ordinary parts (via Hida's ordinary idempotent).
We then have $\La$-modules $\mc{H} = \invlim_r H_r^{\ord}$, $\tilde{\mc{H}} = \invlim_r \tilde{H}_r^{\ord}$, $\mc{C} = \invlim_r C_r^{\ord}$, and $\tilde{\mc{C}} = \invlim_r \tilde{C}_r^{\ord}$, with $a \in \Gamma$ acting as
the inverse diamond operator $\langle a \rangle^{-1}$.  As in \cite[Lemma 3.2]{sh-Lfn}, we have the compatibility
of Manin symbols in the following definition.

\begin{definition} \label{invlimcd}
	For integers $u$ and $v$ with $p \nmid v$ and $(u,v,M) = 1$, let 
	$$
		(u:v) = ([p^{r-1} u : v]_r)_r \in \tilde{\mc{H}},
	$$
	and given integers $c, d > 1$ prime to $6N$, set
	$$
		{}_{c,d}(u:v) = c^2d^2(u:v) - d^2(cu:v) - c^2(u:dv) + (cu:dv) \in \tilde{\mc{H}}.
	$$
\end{definition}

As a $\tilde{\La}$-module, $\tilde{\mc{H}}$ is generated by the elements $(u:v)$ for $u, v \in \Z$
as above \cite[Lemma 3.2.5]{fk-proof}.   The proof is simple: the image of $(u:v)$ in 
$\tilde{H}_1^{\ord}$ is the ordinary projection of $[u:v]_1$.  Since $\tilde{H}_1^{\ord}$ is isomorphic by Hida theory to the group of $\Gamma$-coinvariants of $\tilde{\mc{H}}$ (see 
\cite[Theorem 1.4.3]{ohta-ord}), we may apply Nakayama's lemma.

The $\tilde{\La}$-submodule $\mc{H}$ of $\tilde{\mc{H}}$ is generated by those elements $(u:v)$ with $N \nmid u$.
We let $\tilde{\mc{C}}$ denote the $\tilde{\La}$-submodule of $\tilde{\mc{H}}$ generated by all $(c,d)$-symbols 
${}_{c,d}(u:v)$, and we let $\mc{C}$ be the $\tilde{\La}$-submodule generated by those with $N \nmid u$.

We let $\La_{\theta} = \tilde{\La}^{(\theta)} \cong \mc{O}_{\theta}\ps{\Gamma}$.  
We again set $f = f_{\theta\omega^{-2}}$.  We take $p \ge 5$ in the following two results for simplicity of presentation.
They are immediate consequences of Theorems \ref{mainthm} and \ref{eisthm} (and their cuspidal-at-zero analogues) upon application of Nakayama's lemma.

\begin{proposition} \label{ordcoh}
	If $M \mid f$ and $\theta \neq \omega^2$, then
	\begin{eqnarray*}
		\mc{H}^{(\theta)} = \mc{C}^{(\theta)} + \La_{\theta}e_{\theta}(p:1)
		&\text{and}& \tilde{\mc{H}}^{(\theta)} = \tilde{\mc{C}}^{(\theta)} +  \La_{\theta}e_{\theta}(p:1).
	\end{eqnarray*}
	Moreover, if $f = N$, then $\mc{H}^{(\theta)} = \mc{C}^{(\theta)}$
	and $\tilde{\mc{H}}^{(\theta)} = \tilde{\mc{C}}^{(\theta)}$.
\end{proposition}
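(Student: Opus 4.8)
The plan is to deduce both identities from the level-one results already established---Theorem~\ref{mainthm} and its cuspidal-at-zero counterpart in Remark~\ref{mainremark}---by a single application of Nakayama's lemma over the local ring $\La_{\theta} \cong \mc{O}_{\theta}\ps{\Gamma}$. First I would record that $\tilde{\mc{H}}$, and likewise $\mc{H}$, is a finitely generated $\tilde{\La}$-module: its group of $\Gamma$-coinvariants is $\tilde{H}_1^{\ord}$ by the Hida control theorem \cite[Theorem~1.4.3]{ohta-ord}, which is finitely generated over $\zp[\Delta]$, and $\tilde{\mc{H}}$ is $\La$-adically complete, so topological Nakayama applies. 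Consequently $\tilde{\mc{H}}^{(\theta)} \cong \tilde{\mc{H}} \otimes_{\tilde{\La}} \La_{\theta}$ and $\mc{H}^{(\theta)}$ are finitely generated over $\La_{\theta}$, and because the augmentation ideal $I_{\Gamma}\La_{\theta}$ lies in the maximal ideal of $\La_{\theta}$, Nakayama reduces the problem to proving the displayed equalities after passing to $\Gamma$-coinvariants.

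Under the control isomorphism $\tilde{\mc{H}}_{\Gamma} \cong \tilde{H}_1^{\ord}$ the class $(u:v)$ goes to the ordinary projection of $[u:v]_1$, hence ${}_{c,d}(u:v)$ goes to the ordinary projection of ${}_{c,d}[u:v]_1$, and the same holds for $\mc{H}_{\Gamma} \cong H_1^{\ord}$ using the cuspidal-at-zero version of the control theorem. Taking $\theta$-parts, the image of $\tilde{\mc{C}}^{(\theta)}$ in $\tilde{H}_1^{(\theta),\ord}$ is the ordinary part of $\tilde{C}_1^{(\theta)}$, and similarly for $\mc{C}^{(\theta)}$. It remains to match the image of $e_{\theta}(p:1)$ with the extra generator of Theorem~\ref{mainthm}: by Definition~\ref{invlimcd} it maps to the ordinary projection of $e_{\theta}[p:1]_1$, and \eqref{simplereln} gives $[p:1]_1 = -[1:-p]_1$; since $\theta$---hence also $\omega^2$ and $\theta\omega^{-2}$---is even, expanding $e_{\theta}[1:p]_1$ and $e_{\theta}[1:-p]_1$ into eigensymbols $\alpha_{\chi,\psi}^{1,p}$ shows they differ only in the terms with $\chi$ odd, all of which lie in $\tilde{C}_1^{(\theta)}$ by Remark~\ref{eigokay}. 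Thus, modulo $\tilde{C}_1^{(\theta)}$, the image of $e_{\theta}(p:1)$ agrees up to sign with the image of $[1:p]$ in the $\theta$-eigenspace. Applying the ordinary idempotent (which is $\zp[\Delta]$-linear, hence $\mc{O}_{\theta}$-linear) to Theorem~\ref{mainthm} and to Remark~\ref{mainremark} now yields exactly the two displayed equalities after reduction modulo $I_{\Gamma}$, and Nakayama concludes. For the last assertion, $f = N = Mp$ forces $p \mid f$, hence $\theta\omega^{-2}|_{(\Z/p\Z)^{\times}} \neq 1$, so the stronger conclusion of Theorem~\ref{mainthm} (resp.\ Remark~\ref{mainremark}) applies and the same argument removes the generator $\La_{\theta}e_{\theta}(p:1)$.

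Everything except the passage from level one up the tower is formal. The only two points requiring genuine input are the control theorem in its cuspidal-at-zero form---so that $\mc{H}$ can be handled exactly like $\tilde{\mc{H}}$---and the short parity computation identifying the reduction of $e_{\theta}(p:1)$. I expect the cuspidal-at-zero control statement to be the main obstacle to a fully careful writeup, though it should follow either from Ohta's arguments applied to the relative homology of $(X_1(N_r),C_1^0(N_r))$ in place of $(X_1(N_r),C_1(N_r))$, or by taking ordinary parts and inverse limits in the exact sequence relating $H_r$ with $\tilde{H}_r$.
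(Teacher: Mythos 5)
Your argument is correct and is essentially the paper's own proof: the paper derives Proposition \ref{ordcoh} in one line as an immediate consequence of Theorem \ref{mainthm} and its cuspidal-at-zero analogue (Remark \ref{mainremark}) via Nakayama's lemma over $\La_{\theta}$, using the same control isomorphism $\tilde{\mc{H}}_{\Gamma} \cong \tilde{H}_1^{\ord}$ that it already invoked to show $\tilde{\mc{H}}$ is generated by the $(u:v)$. Your extra bookkeeping---identifying the image of $e_{\theta}(p:1)$ with $\pm e_{\theta}[1:p]$ modulo $\tilde{C}_1^{(\theta)}$ via the parity of $\theta$, and flagging the cuspidal-at-zero form of the control theorem---only makes explicit what the paper leaves implicit.
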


Let $\mf{H}$
denote Hida's ordinary Hecke algebra acting as $\tilde{\La}$-linear endomorphisms of $\tilde{\mc{H}}$,
and let $\mf{h}$ denote the cuspidal Hecke algebra acting on $\mc{H}$.
Note that $\mf{H}^{(\theta)}$ is a free $\La_{\theta}$-module, again by Hida theory.
Let $I$ denote the Eisenstein ideal of $\mf{H}$ (and its image in $\mf{h}$) that is generated by
$T_{\ell} - 1 - \ell\langle \ell \rangle$ for primes 
$\ell \nmid N$ and $U_{\ell} - 1$ for primes $\ell \mid N$. 
Let $\mf{m}$ denote the unique maximal ideal of $\mf{H}^{(\theta)}$ containing $I\mf{H}^{(\theta)}$.  We write
the localization of $\mf{H}^{(\theta)}$ at $\mf{m}$ more briefly as $\mf{H}_{\mf{m}}$, and similarly for
$\mf{H}$-modules.

\begin{proposition} \label{spancoh}
	If $M \mid f$, then $\mc{H}_{\mf{m}}$ is equal to $\mc{C}_{\mf{m}}$.  If $\theta \neq \omega^2$ as well,
	 then $\tilde{\mc{H}}_{\mf{m}}$ is equal to $\tilde{\mc{C}}_{\mf{m}}$.
\end{proposition}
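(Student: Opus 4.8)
The plan is to reduce Proposition \ref{spancoh} to the finite-level Eisenstein-component statement, Theorem \ref{eisthm} (together with its cuspidal-at-zero analogue in Remark \ref{mainremark}), via Nakayama's lemma applied over the local Iwasawa-type ring $\mf{H}_{\mf{m}}$. First I would observe that $\mc{C}_{\mf{m}}$ is an $\mf{H}_{\mf{m}}$-submodule of $\mc{H}_{\mf{m}}$ (the $(c,d)$-symbols ${}_{c,d}(u:v)$ are manifestly permuted, up to $\tilde{\La}$-combinations, by the Hecke operators, since the same is true at each finite level), and similarly $\tilde{\mc{C}}_{\mf{m}} \subseteq \tilde{\mc{H}}_{\mf{m}}$. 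Since $\mf{H}^{(\theta)}$ is a finitely generated $\La_\theta$-algebra and $\mc{H}^{(\theta)}$ is a finitely generated $\La_\theta$-module (Hida theory), the localizations $\mc{H}_{\mf{m}}$ and the quotient $\mc{H}_{\mf{m}}/\mc{C}_{\mf{m}}$ are finitely generated modules over the local ring $\mf{H}_{\mf{m}}$, whose maximal ideal contains $(\mf{m}_{\La_\theta}, I)$; in particular it contains $p$, the augmentation ideal of $\La_\theta$, and the Eisenstein ideal $I$.

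The key step is then to identify the reduction of $\mc{C}_{\mf{m}}$ and $\mc{H}_{\mf{m}}$ modulo the augmentation ideal $J$ of $\Gamma$ (i.e.\ base change $\tilde{\La} \to \zp[\Delta]$) with the finite-level objects. Concretely, $\mc{H}/J\mc{H}$ is, by Hida's control theorem \cite[Theorem 1.4.3]{ohta-ord}, identified with $H_1^{\ord}$, under which the compatible system $(u:v)$ maps to the ordinary projection of the Manin symbol $[u:v]_1$ and ${}_{c,d}(u:v)$ maps to ${}_{c,d}[u:v]_1$; hence the image of $\mc{C}$ in $H_1^{\ord}$ is exactly $C_1^{\ord}$ (image of $C_1$), and likewise $\tilde{\mc{C}}$ maps onto $\tilde{C}_1^{\ord}$. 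Next I would localize this identification at $\mf{m}$: taking $(\theta)$-components and then localizing at $\mf{m}\mf{H}^{(\theta)}$ is exact, so $(\mc{H}/\mc{C})_{\mf{m}} / J(\mc{H}/\mc{C})_{\mf{m}}$ is identified with $\big(H_1^{(\theta)}/C_1^{(\theta)}\big)_{\mf{m}}$ (and similarly in the tilde case). By Theorem \ref{eisthm} — using Remark \ref{mainremark} to allow $\theta = \omega^2$ in the cuspidal-at-zero case of the first assertion — this reduction vanishes: $\big(H_1^{(\theta)}\big)_{\mf{m}} = \big(C_1^{(\theta)}\big)_{\mf{m}}$, and $\big(\tilde{H}_1^{(\theta)}\big)_{\mf{m}} = \big(\tilde{C}_1^{(\theta)}\big)_{\mf{m}}$ when $\theta \neq \omega^2$. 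Since $J$ is contained in the Jacobson radical of $\mf{H}_{\mf{m}}$ and $\mc{H}_{\mf{m}}/\mc{C}_{\mf{m}}$ is a finitely generated $\mf{H}_{\mf{m}}$-module, Nakayama's lemma gives $\mc{H}_{\mf{m}} = \mc{C}_{\mf{m}}$, and likewise $\tilde{\mc{H}}_{\mf{m}} = \tilde{\mc{C}}_{\mf{m}}$ when $\theta \neq \omega^2$.

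The main obstacle I anticipate is the bookkeeping needed to justify that reduction modulo $J$ really does carry $\mc{C}_{\mf{m}}$ onto $\big(C_1^{(\theta)}\big)_{\mf{m}}$ rather than merely something contained in it: one must check that the natural surjection $\mc{C}/J\mc{C} \to C_1^{\ord}$ (obtained by restricting the control isomorphism for $\mc{H}$) is itself surjective, which amounts to knowing that every $(c,d)$-symbol at level one lifts to a compatible system — this is exactly Definition \ref{invlimcd} together with the generation statement \cite[Lemma 3.2.5]{fk-proof}, applied to the $(c,d)$-symbols rather than to all Manin symbols. A secondary point requiring care is that $\mf{m}$ is indeed the unique maximal ideal over $I\mf{H}^{(\theta)}$ containing the full maximal ideal of $\La_\theta$, so that localizing at $\mf{m}$ automatically incorporates $p$ and $J$ into the radical; this is where the hypothesis $M \mid f$ enters, guaranteeing (via the theory recalled in the introduction) that $\mf{H}^{(\theta)}_{\mf{m}}$ is local with the expected residue characteristic. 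Once these identifications are in place, the argument is a formal application of Nakayama exactly as in the proof of Theorem \ref{eisthm}, the only new ingredient over that proof being the extra layer of localization at $\mf{m}$, which is harmless because localization is exact and commutes with the formation of the relevant quotients.
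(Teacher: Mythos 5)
Your proposal is correct and follows essentially the same route as the paper, which derives Proposition \ref{spancoh} directly from Theorem \ref{eisthm} and its cuspidal-at-zero analogue (Remark \ref{mainremark}, needed to allow $\theta = \omega^2$ in the first assertion) by Nakayama's lemma, using the control isomorphism identifying $\Gamma$-coinvariants of the inverse limit with the finite-level ordinary part. Your write-up merely makes explicit the bookkeeping (exactness of localization at $\mf{m}$, the image of $\mc{C}$ under reduction) that the paper leaves as immediate.
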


\subsection{Zeta elements} \label{zeta}

In this subsection, we consider the integrality of maps that take Manin symbols to zeta elements.  These maps were defined rationally in \cite[Section 3]{fk-proof}.  We suppose that $p \ge 5$. 
 
For $s \ge 0$, let $\Z_s$ denote the ring of integers of $\Q_s$, the maximal $p$-extension of $\Q$ in $\Q(\mu_{p^s})$, and let $\Z_{\infty}$ denote the ring of integers of the cyclotomic $\zp$-extension $\Q_{\infty}$ of $\Q$.  
Set 
\begin{eqnarray*}
	\T_r = H^1(X_1(N_r)_{/\overline{\Q}},\zp(1))^{\ord} &\mr{and}& 
	\tT_r = H^1(Y_1(N_r)_{/\overline{\Q}},\zp(1))^{\ord},
\end{eqnarray*}
and let $\T$ and $\tT$ denote the corresponding inverse limits under trace maps.  
We let Hida's Hecke algebras $\mf{h}$ and
$\mf{H}$ act on $\T$ and $\tT$, respectively, via the adjoint actions of Hecke operators (and here, ordinary parts
have been taken with respect to the adjoint action of $U_p$).
For any compact $\zp\ps{G_{\Q}}$-module $\mc{A}$ that is unramified outside of $N$, set
$$
	H^1_{\Iw}(\Z_{\infty}[\tfrac{1}{N}],\mc{A}) = \invlim_s H^1(\Z_s[\tfrac{1}{N}],\mc{A}).
$$

\begin{definition} \label{zetadef}
	For $r, s \ge 0$ and $u, v \in \Z/N_r\Z$ with $(u,v) = (1)$ (and $u, v \neq 0$ if $s = 0$) and $c,d >1$ and
	prime to $6N$, 
	we define ${}_{c,d}z_{r,s}(u:v)$ to be the image
	under the norm and Hochschild-Serre maps
	$$
		H^2(Y(p^s,N_{r+s})_{/\Z[\frac{1}{N}]},\zp(2)) \to H^2(Y_1(N_r)_{/\Z_s[\frac{1}{N}]},\zp(2))
		\xrightarrow{\mr{HS}} H^1(\Z_s[\tfrac{1}{N}],H^1(Y_1(N_r)_{/\qbar},\zp(2)))
	$$
	of the cup product 
	${}_c g_{\frac{w}{p^s},\frac{y}{N_{r+s}}} \cup {}_d g_{\frac{x}{p^s},\frac{z}{N_{r+s}}}$ 
	of Siegel 
	units on $Y(p^s,N_{r+s})_{/\Z[\frac{1}{N}]}$, where $\left( \begin{smallmatrix} w&x\\y&z  
	\end{smallmatrix} \right) \in \mr{SL}_2(\Z)$ with $u = y \bmod N_r$ and $v = z \bmod N_r$.
\end{definition}

\begin{remark}
	In defining ${}_{c,d}z_{r,s}(u:v)$, 
	we have taken corestriction from $\Z[\mu_{p^s}]$ to $\Z_s$ as part of the first map.  The latter
	discards information but is sufficient for the purposes of studying
	the conjecture of \cite{sh-Lfn}. 
\end{remark}

\begin{remark}
	For $\alpha, \beta \in \Q$ not both zero and with denominators dividing $L > 1$, there is a rationally-defined 
	Siegel unit $g_{\alpha,\beta} \in \mc{O}(Y(L)_{/\Z[\frac{1}{L}]})^{\times} \otimes_{\Z} \Q$ which satisfies
	${}_c g_{\alpha,\beta} = g_{\alpha,\beta}^{c^2} g_{c\alpha,c\beta}^{-1} \in \mc{O}(Y(L)_{/\Z[\frac{1}{L}]})^{\times}$
	for $c > 1$ prime to $6L$.  
	Taking a cup product followed by a norm of two Siegel units attached to a pair $(u,v)$ as in Definition \ref{zetadef}, 
	we obtain an element 
	$$
		z_{r,s}(u:v) \in H^1(\Z_s[\tfrac{1}{N}],H^1(Y_1(N_r)_{/\qbar},\qp(2))).
	$$  
	For $s = 0$, the relationship between this zeta element and the $(c,d)$-version is given in the
	latter cohomology group by
	$$
		{}_{c,d} z_{r,0}(u:v) = c^2d^2z_{r,0}(u:v) - d^2z_{r,0}(cu:v) - 
		c^2z_{r,0}(u:dv) + z_{r,0}(cu:dv),
	$$
	mirroring our definition of $(c,d)$-symbols in Definition \ref{cdsyms}.  For $s > 0$, the relationship between the
	zeta elements $z_{r,s}(u:v)$ and ${}_{c,d}z_{r,s}(u:v)$ is likewise analogous to the relationship
	between the symbols $(u:v)$ and the symbols ${}_{c,d}(u:v]$ defined below in \eqref{fancycd}.
\end{remark}

\begin{remark}
	We have the following norm compatibilities among zeta elements.
	\begin{enum}
		\item[i.] For a fixed $s \ge 0$, the elements ${}_{c,d}z_{r,s}(p^{r-1}u:v)$ with $r \ge 1$ are
	compatible with the norm maps induced by quotients of modular curves.  (This is seen through the
	observation that
	${}_{c,d}z_{r+1,s}(p^r u:v)$ arises from a norm of a cup product of two Siegel units, the first of 
	which is already a unit on $Y(p^s,N_{r+s})_{/\Z[\frac{1}{N}]}$.)
		\item[ii.] For a fixed $r \ge 1$, the elements ${}_{c,d}z_{r,s}(u:v)$ are compatible for $s \ge 1$
	under corestriction maps for the ring extensions, while the corestriction from $\Z_s[\frac{1}{N}]$ to 
	$\Z[\frac{1}{N}]$ takes ${}_{c,d}z_{r,s}(u:v)$ to $(1-U_p){}_{c,d}z_{r,0}(u:v)$
	(see \cite[Prop.~2.4.4]{fk-proof}).
	\end{enum}
\end{remark}

Let us use ${}_{c,d}z_{r,s}(u:v)_{\theta}$ to denote the projection of ${}_{c,d}z_{r,s}(u:v)$ to 
$H^1(\Z_s[\frac{1}{N}],\tT_r^{(\theta)}(1))$.  We will use the same notation to denote its image in
the Eisenstein component $H^1(\Z_s[\frac{1}{N}],(\tT_r)_{\mf{m}}(1))$.

\begin{definition}
	For $c, d > 1$ with $(c,d,6N) = 1$ and $u, v \in \Z$ with $(u,v,M) = 1$ and $p \nmid v$, we set
	$$
		{}_{c,d} z(u:v)_{\theta} = ({}_{c,d}z_{r,s}(p^{r-1}u:v)_{\theta})_{r,s \ge 1} \in 
		H^1_{\Iw}(\Z_{\infty}[\tfrac{1}{N}],\tT^{(\theta)}(1)). 
	$$
	If $u \not\equiv 0 \bmod N$, we also set
	$$
		{}_{c,d} z^{\sharp}(u:v)_{\theta} =
		 ({}_{c,d}z_{r,0}(p^{r-1}u:v)_{\theta})_{r \ge 1} \in H^1(\Z[\tfrac{1}{N}],\tT^{(\theta)}(1)).
	$$ 
\end{definition}

\begin{remark}
	The corestriction of ${}_{c,d} z(u:v)_{\theta}$ to $H^1(\Z[\tfrac{1}{N}],\tT^{(\theta)}(1))$ is 
	$(1-U_p)z^{\sharp}(u:v)_{\theta}$
	for $u \not\equiv 0 \bmod N$.
\end{remark}

In what follows, $\La$ should typically be thought of as an Iwasawa algebra of elements of $\Gal(\Q_{\infty}/\Q)$, while $\La_{\theta}$ should be considered as an algebra of inverses of diamond operators.
Let $Q(O)$ denote the quotient field of an integral domain $O$.

\begin{remark}
	In \cite[Prop.~3.1.3]{fk-proof},  the $\La$-module
	$H^1_{\Iw}(\Z[\mu_{p^{\infty}},\tfrac{1}{N}],H^1(X_1(N_r)_{/\qbar},\zp)(2))$ is shown to be
	torsion-free.  The proof of the following lemma yields the analogue on ordinary parts
	in the inverse limit over $r$, which is to say that $H^1_{\Iw}(\Z[\mu_{p^{\infty}},\tfrac{1}{N}],\T^{(\theta)}(1))$
	is $\La \cotimes{\zp} \La_{\theta}$-torsion free, without assumption on $\theta$.  However,
	we work with the cohomology of $\Z_{\infty}[\frac{1}{N}]$, as opposed to $\Z[\mu_{p^{\infty}},\frac{1}{N}]$, 
	in order to obtain
	torsion-freeness with coefficients in the larger module $\tT^{(\theta)}(1)$.
\end{remark}

 In the proofs of the results that follow, though not the statements, we will omit superscripts ``${}^{(\theta)}$'' denoting
eigenspaces and subscripts ``${}_{\mf{m}}$'' denoting Eisenstein parts to lessen notation.

\begin{lemma}\ \label{Iwtorsfree}
	The $\La \cozp \La_{\theta}$-module $H^1_{\Iw}(\Z_{\infty}[\tfrac{1}{N}],\tT^{(\theta)}(1))$ is
	torsion-free.  
\end{lemma}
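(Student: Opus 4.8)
The plan is to deduce torsion-freeness of $H^1_{\Iw}(\Z_\infty[\tfrac{1}{N}],\tT^{(\theta)}(1))$ from the corresponding statement over $\Z[\mu_{p^\infty},\tfrac{1}{N}]$, which is already available (by \cite[Prop.~3.1.3]{fk-proof} and its ordinary-limit refinement noted in the preceding remark, and which the proof will also reprove in the course of the argument). First I would pass to a finite level: by definition $H^1_{\Iw}(\Z_\infty[\tfrac{1}{N}],\tT^{(\theta)}(1)) = \invlim_s \invlim_r H^1(\Z_s[\tfrac{1}{N}],\tT_r^{(\theta)}(1))$, so it suffices to control torsion in each $H^1(\Z_s[\tfrac{1}{N}],\tT_r^{(\theta)}(1))$ compatibly, and an inverse limit of torsion-free modules over the relevant Iwasawa algebra is torsion-free. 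The key input is that, for an ordinary eigenspace, the $G_\Q$-representation $\tT_r^{(\theta)}(1)$ is (after inverting $p$, or integrally by Ohta/Hida's control theorems) free as a $\zp$-module, so $H^1(\Z_s[\tfrac{1}{N}],\tT_r^{(\theta)}(1))$ is a finitely generated $\zp$-module whose torsion is governed by $H^0$.

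The main steps, in order, would be: (1) reduce, via the remark's identification, to showing $H^1_{\Iw}(\Z_\infty[\tfrac{1}{N}],\tT^{(\theta)}(1))$ has no $\La\cozp\La_\theta$-torsion, where the first $\La$ records the cyclotomic direction $s$ and $\La_\theta$ the tower direction $r$; (2) observe that $\tT^{(\theta)}(1)$ is a $\La_\theta$-module that is torsion-free (indeed free on ordinary parts by Hida theory, as used implicitly for $\mf{H}^{(\theta)}$ earlier in the excerpt), so the only possible source of torsion in $H^1$ is a failure of injectivity of the natural map into the $Q(\La\cozp\La_\theta)$-tensor, i.e.\ a pseudo-null submodule; (3) identify such torsion, via the standard descent (Hochschild–Serre / Poitou–Tate) spectral sequence, with a subquotient of $H^0$ of various layers, namely $H^0(\Z_s[\tfrac{1}{N}],\tT_r^{(\theta)}(1))$; (4) show this $H^0$ vanishes — this is where one uses that the $G_\Q$-action on $\tT_r(1)$ is ramified at $p$ with the ordinary filtration having the ``wrong'' Hodge–Tate weight on the unramified part for a nonzero invariant to survive the cyclotomic twist, together with the Eichler–Shimura/Ihara description of the Galois action showing no global invariants in the eigenspace; (5) take the inverse limit, using that the transition maps are the trace maps, to conclude the full Iwasawa module is torsion-free.

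I expect step (4) — vanishing of the relevant $H^0$ in the eigenspace $\tT_r^{(\theta)}(1)$, uniformly in $r$ and $s$ — to be the main obstacle, since it is the only place where genuine arithmetic of modular Galois representations enters rather than formal Iwasawa-theoretic bookkeeping; in the companion statement over $\Z[\mu_{p^\infty},\tfrac{1}{N}]$ this is handled in \cite{fk-proof}, and here the extra subtlety is that one works with $\Z_\infty[\tfrac{1}{N}]$ and with coefficients in the larger open-curve module $\tT$ rather than $\T$, so one must check that passing to $Y_1(N_r)$ and to the smaller cyclotomic field does not introduce new invariants (the extra cohomology coming from the cusps is Eisenstein and its $H^0$ is still controlled by the cyclotomic twist being nontrivial). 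A secondary technical point will be ensuring the inverse limit in step (5) genuinely preserves torsion-freeness — this follows because each finite-level module is finitely generated over a Noetherian local ring and the transition maps are surjective modulo torsion (a standard Mittag-Leffler argument), so no new torsion is created in the limit.
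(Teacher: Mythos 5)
There is a genuine gap, and it lies exactly where you place the weight of the argument. You propose to detect $\La \cozp \La_{\theta}$-torsion by checking torsion-freeness ``compatibly'' at the finite levels $H^1(\Z_s[\tfrac{1}{N}],\tT_r^{(\theta)}(1))$ and then passing to the limit. But these finite-level groups are finitely generated $\zp$-modules; over the two-variable algebra $\La \cozp \La_{\theta}$ every element of such a group is torsion, so no statement about them (in particular no vanishing of $H^0(\Z_s[\tfrac{1}{N}],\tT_r^{(\theta)}(1))$, however uniform in $r$ and $s$) can yield torsion-freeness of the limit over $\La \cozp \La_{\theta}$. At best your steps (3)--(5) would give $p$-torsion-freeness, which is far weaker than what the lemma asserts. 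The correct reduction, which the paper carries out, is via Shapiro's lemma: writing $U = \La^{\iota} \cozp \T(1)$, one has $H^1_{\Iw}(\Z_{\infty}[\tfrac{1}{N}],\T(1)) \cong H^1(\Z[\tfrac{1}{N}],U)$, and for an arbitrary nonzero $\nu \in \La \cozp \La_{\theta}$ the $\nu$-torsion lands in $H^0(\Z[\tfrac{1}{N}],U/\nu U) \otimes_{\La_{\theta}} Q(\La_{\theta})$. It is the $H^0$ of the quotient module $U/\nu U$, not the $H^0$ of any finite layer, that must be shown to vanish.

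The arithmetic input needed for that vanishing is also different from the one you invoke. Hodge--Tate weights and the ordinary filtration (your step (4)) are local considerations suited to showing $H^0$ of the module itself vanishes; they say nothing about $H^0(\Z[\tfrac{1}{N}],U/\nu U)$, on which $G_{\Q}$ acts through a twist by $\La^{\iota}(1) \cozp Q(\La_{\theta})/\nu$. The paper's point is representation-theoretic: by Hida's theorem, $\T \otimes_{\La_{\theta}} Q(\La_{\theta})$ is a direct sum of irreducible two-dimensional representations attached to ordinary $\La_{\theta}$-adic eigenforms, whereas $G_{\Q}$ acts on $\La^{\iota}(1) \cozp Q(\La_{\theta})/\nu$ through its maximal abelian quotient, so every $G_{\Q}$-equivariant map from the dual of $\T \otimes_{\La_{\theta}} Q(\La_{\theta})$ to that quotient is zero. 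Finally, the passage from $\T$ to $\tT$ requires a separate argument, since the boundary piece $\tT/\T$ is reducible and irreducibility cannot be used there: the paper uses that $G_{\Q(\mu_M)}$ acts trivially on $\tT/\T$, so that the invariants of $\bar{U}/\nu\bar{U}$ (with $\bar{U} = \La^{\iota} \cozp (\tT/\T)(1)$) are killed by $\omega(a)-1$ for all $a \in (\Z/p\Z)^{\times}$ and hence vanish. Your remark that the cuspidal contribution ``is Eisenstein and its $H^0$ is still controlled by the cyclotomic twist'' gestures at this, but it must be applied to the quotient modules $\bar{U}/\nu\bar{U}$, not to finite-level $H^0$'s.
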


\begin{proof}
	Note first that $H^1_{\Iw}(\Z_{\infty}[\tfrac{1}{N}],\T(1))$ has no nonzero $\Lambda_{\theta}$-torsion 
	as zeroth Iwasawa cohomology groups are trivial.
	We modify the argument of \cite[Prop.~3.1.3(1)]{fk-proof} in order to see that
	$H^1_{\Iw}(\Z_{\infty}[\tfrac{1}{N}],\T(1))$ has no $\La \cozp \La_{\theta}$-torsion.  
	Let $\La^{\iota}$ denote $\La$ with the inverse action 
	of elements of the absolute Galois group $G_{\Q}$, and set $U = \La^{\iota} \cozp \T(1)$.
	Take a nonzero $\nu \in \La \cozp \La_{\theta}$.  Any $\nu$-torsion in the $\La_{\theta}$-torsion free 
	$$
		H^1_{\Iw}(\Z_{\infty}[\tfrac{1}{N}],\T(1)) \cong H^1(\Z[\tfrac{1}{N}],U)
	$$ 
	lies in $H^0(\Z[\tfrac{1}{N}],U/\nu U) \otimes_{\La_{\theta}} Q(\La_{\theta})$.
	We set $\mc{Q} = \La^{\iota}(1) \cozp Q(\La_{\theta})$ for brevity,
	and we let $V^*$ denote the $Q(\La_{\theta})$-dual of a $Q(\La_{\theta})\ps{G_{\Q}}$-module $V$.
	We then have
	\begin{align*}
		H^0(\Z[\tfrac{1}{N}],U/\nu U) \otimes_{\La_{\theta}} Q(\La_{\theta}) 
		&\cong \Hom_{Q(\La_{\theta})\ps{G_{\Q}}}(Q(\La_{\theta}),U/\nu U \otimes_{\La_{\theta}} Q(\La_{\theta}))\\
		&\cong \Hom_{Q(\La_{\theta})\ps{G_{\Q}}}(Q(\La_{\theta}),
		\T \otimes_{\La_{\theta}} \mc{Q}/\nu\mc{Q})\\
		&\cong \Hom_{Q(\La_{\theta})\ps{G_{\Q}}}((\T  \otimes_{\La_{\theta}} Q(\La_{\theta}))^*,
		\mc{Q}/\nu\mc{Q}),
	\end{align*}
	noting that $\T  \otimes_{\La_{\theta}} Q(\La_{\theta})$ is a finite-dimensional $Q(\La_{\theta})$-vector space 
	in the final step.  It follows from the work of Hida \cite{hida} that the rank two
	$\mf{h} \otimes_{\La_{\theta}} Q(\La_{\theta})$-module 
	$\T \otimes_{\La_{\theta}} Q(\La_{\theta})$ is a direct sum of irreducible two-dimensional representations 
	over finite extensions of $Q(\La_{\theta})$
	that are attached to ordinary $\La_{\theta}$-adic eigenforms for the Hecke operators $T_{\ell}$ with 
	$\ell \nmid N$ and $U_p$.
	Since $G_{\Q}$ acts on $\mc{Q}/\nu\mc{Q}$ through
	its maximal abelian quotient,
	any $Q(\La_{\theta})\ps{G_{\Q}}$-homomorphism 
	$$
		(\T \otimes_{\La_{\theta}} Q(\La_{\theta}))^* \to \mc{Q}/\nu\mc{Q}
	$$	 
	is trivial.
	Thus, the $\La \cozp \La_{\theta}$-module $H^1_{\Iw}(\Z_{\infty}[\tfrac{1}{N}],\T(1))$ has no 
	torsion.  
	
	It remains only to show that the rightmost group in the exact sequence
	$$
		0 \to H^1_{\Iw}(\Z_{\infty}[\tfrac{1}{N}],\T(1)) 
		\to H^1_{\Iw}(\Z_{\infty}[\tfrac{1}{N}],\tT(1))
		\to H^1_{\Iw}(\Z_{\infty}[\tfrac{1}{N}],(\tT/\T)(1)).
	$$
	has no $\La \cozp \La_{\theta}$-torsion.
	Taking $\bar{U} = \La^{\iota} \cozp (\tT/\T)(1)$,
	this reduces to showing that the $G_{\Q}$-invariant group $(\bar{U}/\nu\bar{U})^{G_{\Q}}$ is trivial 
	for $\nu$ as before.  
	But, $G_{\Q(\mu_M)}$ acts trivially
	on $\tT/\T$ by \cite[Prop.~3.2.4]{fk-proof}, so $(\bar{U}/\nu\bar{U})^{G_{\Q(\mu_M)}}$ 
	is killed by $\omega(a)-1$ for all $a \in (\Z/p\Z)^{\times}$, which of course means it is trivial.
\end{proof}

We next prove our first result on the integrality of the zeta maps of the first two authors.
We consider the following symbols in the completed tensor product $\La \cozp \tilde{\mc{H}}$:
\begin{equation} \label{fancycd}
	{}_{c,d}(u:v] = c^2d^2 \otimes (u:v) - d^2\kappa(c) \otimes (cu:v) - c^2\kappa(d) \otimes (u:dv) + \kappa(cd)
	\otimes (cu:dv),
\end{equation}
where $\kappa \colon \zp^{\times} \to \La$ sends a unit to the group element of its projection to $1+p\zp$ (see \cite[2.4.6]{fk-proof}).  

\begin{theorem} \label{zinfty}
	If $f = N$ (resp., $M \mid f$ and $f > 1$), then
	there exists a unique $\La \cozp \mf{H}$-linear map
	$$
		 z \colon \La \cozp  \tilde{\mc{H}} \to 
		 H^1_{\Iw}(\Z_{\infty}[\tfrac{1}{N}],\tT^{(\theta)}(1)) \quad
		\text{(resp., }z \colon \La \cozp  \tilde{\mc{H}} \to 
		 H^1_{\Iw}(\Z_{\infty}[\tfrac{1}{N}],\tT_{\mf{m}}(1)) \text{)} 
	$$
	such that $z({}_{c,d}(u:v]) = {}_{c,d} z(u,v)_{\theta}$ for all $c$, $d$, $u$, and $v$.
\end{theorem}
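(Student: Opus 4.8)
The plan is to deduce $z$ from the rationally-defined map of \cite[Section 3]{fk-proof} by showing that it is in fact integral, the two ingredients being the generation results of the previous subsections and the torsion-freeness of Lemma \ref{Iwtorsfree}. Write $\tilde z$ for the $\La \cozp \mf{H}$-linear homomorphism
$$
	\tilde z \colon \La \cozp \tilde{\mc{H}} \to H^1_{\Iw}(\Z_{\infty}[\tfrac{1}{N}],\tT^{(\theta)}(1)) \otimes_{\La} Q(\La)
$$
constructed in \cite{fk-proof}, which by construction satisfies $\tilde z({}_{c,d}(u:v]) = {}_{c,d}z(u:v)_{\theta}$ for all $c$, $d$, $u$, $v$; the point of the $(c,d)$-modification is that, by Definition \ref{zetadef}, this value already lies in the integral group $H^1_{\Iw}(\Z_{\infty}[\tfrac{1}{N}],\tT^{(\theta)}(1))$, whereas $\tilde z$ on an unmodified symbol $(u:v)$ only takes values in the rationalization. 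By Lemma \ref{Iwtorsfree} the integral group is $\La \cozp \La_{\theta}$-torsion free, hence $\La$-torsion free, hence embeds into its tensor product with $Q(\La)$. It therefore suffices to prove that the image of $\tilde z$ lies in this integral subgroup: then $z := \tilde z$ is the required map, and uniqueness will follow from the generation claim below.

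First I would record that, since the target is a $\theta$-eigenspace and $\tilde z$ is $\La \cozp \mf{H}$-linear (and $\mf{H}$ contains the diamond operators), $\tilde z$ factors through $\La \cozp \tilde{\mc{H}}^{(\theta)}$ when $f = N$, and through its localization $\La \cozp \tilde{\mc{H}}_{\mf{m}}$ at $\mf{m}$ when $M \mid f$ and $f > 1$, the target in the latter case being the Eisenstein component $H^1_{\Iw}(\Z_{\infty}[\tfrac{1}{N}],\tT_{\mf{m}}(1))$. The claim is that in either case the symbols ${}_{c,d}(u:v]$ generate this module over $\La \cozp \La_{\theta}$. Granting the claim, each generator is carried by $\tilde z$ into $H^1_{\Iw}(\Z_{\infty}[\tfrac{1}{N}],\tT^{(\theta)}(1))$, respectively $H^1_{\Iw}(\Z_{\infty}[\tfrac{1}{N}],\tT_{\mf{m}}(1))$, and since the latter is a $\La \cozp \mf{H}$-submodule of its rationalization, the whole image of $\tilde z$ lands there, giving existence; any two $\La \cozp \mf{H}$-linear maps agreeing on all ${}_{c,d}(u:v]$ then coincide, since $\La \cozp \La_{\theta} \subseteq \La \cozp \mf{H}$, giving uniqueness.

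To prove the claim, let $M'$ be the $\La \cozp \La_{\theta}$-submodule of $\La \cozp \tilde{\mc{H}}^{(\theta)}$ (resp.\ $\La \cozp \tilde{\mc{H}}_{\mf{m}}$) generated by the ${}_{c,d}(u:v]$, and let $\mf{a} = \ker(\La \to \zp)$ be the augmentation ideal. Since $\kappa$ reduces to the trivial map modulo $\mf{a}$, the symbol ${}_{c,d}(u:v]$ of \eqref{fancycd} reduces modulo $\mf{a} \cozp \tilde{\mc{H}}^{(\theta)}$ to the plain $(c,d)$-symbol ${}_{c,d}(u:v)$; hence the image of $M'$ in the quotient $\tilde{\mc{H}}^{(\theta)}$ (resp.\ $\tilde{\mc{H}}_{\mf{m}}$) is the $\La_{\theta}$-span of the ${}_{c,d}(u:v)$, namely $\tilde{\mc{C}}^{(\theta)}$ (resp.\ $\tilde{\mc{C}}_{\mf{m}}$). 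By Proposition \ref{ordcoh} when $f = N$, and by Proposition \ref{spancoh} when $M \mid f$ (note $\theta \neq \omega^{2}$ since $f > 1$), this span is all of $\tilde{\mc{H}}^{(\theta)}$ (resp.\ $\tilde{\mc{H}}_{\mf{m}}$). Thus $M' + \mf{a} \cozp \tilde{\mc{H}}^{(\theta)}$ is the full module, and as $\La \cozp \La_{\theta}$ is Noetherian and complete and the module is finitely generated (so $M'$ is closed), topological Nakayama gives $M' = \La \cozp \tilde{\mc{H}}^{(\theta)}$ (resp.\ the full localized module).

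The real content is entirely in the cited inputs: the generation theorem (Theorem \ref{cdgenerate}, via Propositions \ref{ordcoh} and \ref{spancoh}) and the torsion-freeness (Lemma \ref{Iwtorsfree}). Within this argument, the step that needs genuine care — and the one I expect to be the main obstacle — is the Nakayama reduction passing from generation of $\tilde{\mc{H}}^{(\theta)}$ by the plain symbols ${}_{c,d}(u:v)$ to generation of $\La \cozp \tilde{\mc{H}}^{(\theta)}$ by the $\kappa$-twisted symbols ${}_{c,d}(u:v]$ of \eqref{fancycd}, together with the bookkeeping that identifies the rational map of \cite{fk-proof} and checks that it factors through the relevant eigenspace or Eisenstein component.
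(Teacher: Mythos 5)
Your proposal is correct and follows essentially the same route as the paper: cite the rationally-defined map of \cite[Theorem 3.2.3]{fk-proof}, observe that the values ${}_{c,d}z(u:v)_{\theta}$ are integral, use Propositions \ref{ordcoh} and \ref{spancoh} together with Nakayama's lemma to see that the ${}_{c,d}(u:v]$ generate $\La \cozp \tilde{\mc{H}}$ over $\La \cozp \La_{\theta}$, and conclude via the injectivity supplied by the torsion-freeness of Lemma \ref{Iwtorsfree}. Your explicit reduction modulo the augmentation ideal of $\La$ (using that $\kappa$ becomes trivial there, so the twisted symbols reduce to the plain $(c,d)$-symbols) is precisely the Nakayama step the paper leaves implicit.
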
	

\begin{proof}
	We let $\Omega = \La \cozp \La_{\theta}$.
	In \cite[Theorem 3.2.3]{fk-proof}, it is shown (via a regulator computation) 
	that there exists a well-defined map  
	$$
		z \colon  \La \cozp \tilde{\mc{H}} \to H^1_{\Iw}(\Z_{\infty}[\tfrac{1}{N}],\tT(1)) 
		\otimes_{\Omega} Q(\Omega)
	$$
	with the stated property.  Each ${}_{c,d} z(u:v)_{\theta}$ lies in
	$H^1_{\Iw}(\Z_{\infty}[\tfrac{1}{N}],\tT(1))$, and
	$\La \cozp \tilde{\mc{H}}$ is generated by the ${}_{c,d}(u:v]$ as an 
	$\Omega$-module by Propositions \ref{ordcoh} and \ref{spancoh} and Nakayama's lemma.
	Therefore, the image of $z$ is contained in the image of the map
	$$
		H^1_{\Iw}(\Z_{\infty}[\tfrac{1}{N}],\tT(1)) \to H^1_{\Iw}(\Z_{\infty}[\tfrac{1}{N}],\tT(1)) 
		\otimes_{\Omega} Q(\Omega)
	$$
	Lemma \ref{Iwtorsfree} tells us that the latter map is injective, hence the result.
\end{proof}

\begin{lemma}\ \label{torsfree}
	If $p \mid f$, then the $\La_{\theta}$-module 
	$H^1(\Z[\tfrac{1}{N}],\tT^{(\theta)}(1))$ is torsion-free.
	If $M \mid f$ and $f > 1$, 
	then $H^1(\Z[\frac{1}{N}],\tT_{\mf{m}}(1))$ is $\La_{\theta}$-torsion free.
\end{lemma}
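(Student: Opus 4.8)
The plan is to mimic the proof of Lemma~\ref{Iwtorsfree} with $\Z[\tfrac{1}{N}]$ in place of $\Z_{\infty}[\tfrac{1}{N}]$, and the first move is to reduce both assertions to the torsion-freeness of the cuspidal piece. From the exact sequence $0 \to \T^{(\theta)}(1) \to \tT^{(\theta)}(1) \to (\tT/\T)^{(\theta)}(1) \to 0$, the triviality of the $G_{\Q(\mu_M)}$-action on $\tT/\T$ (\cite[Prop.~3.2.4]{fk-proof}), and the vanishing of $\zp(1)^{G_{\Q(\mu_M)}}$, one gets $H^0(\Z[\tfrac{1}{N}],(\tT/\T)^{(\theta)}(1)) = 0$, hence an injection $H^1(\Z[\tfrac{1}{N}],\T^{(\theta)}(1)) \hookrightarrow H^1(\Z[\tfrac{1}{N}],\tT^{(\theta)}(1))$ whose cokernel embeds into $H^1(\Z[\tfrac{1}{N}],(\tT/\T)^{(\theta)}(1))$. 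As these maps are $\La_{\theta}$-linear, if the torsion of $H^1(\Z[\tfrac{1}{N}],\T^{(\theta)}(1))$ vanishes then that of $H^1(\Z[\tfrac{1}{N}],\tT^{(\theta)}(1))$ embeds into the torsion of $H^1(\Z[\tfrac{1}{N}],(\tT/\T)^{(\theta)}(1))$, so it suffices to prove the latter two groups are $\La_{\theta}$-torsion free (and likewise for the Eisenstein components). The boundary term is dealt with exactly as in the last paragraph of the proof of Lemma~\ref{Iwtorsfree}: $(\tT/\T)^{(\theta)}$ is $\La_{\theta}$-torsion free, so for nonzero $\nu \in \La_{\theta}$ the $\nu$-torsion of its first cohomology injects into $H^0(\Z[\tfrac{1}{N}],(\tT/\T)^{(\theta)}(1)/\nu)$, which is killed by $\omega(a)-1$ for $a \in (\Z/p\Z)^{\times}$ — via the Teichm\"uller lift of $a$, which acts trivially on $\tT/\T$ and by $\omega(a)$ on $\zp(1)$ — and so vanishes.

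For the cuspidal piece $M = H^1(\Z[\tfrac{1}{N}],\T^{(\theta)}(1))$ (resp.\ $H^1(\Z[\tfrac{1}{N}],\T_{\mf{m}}(1))$) I would use that $\T^{(\theta)}$ is $\La_{\theta}$-free by Hida theory (resp.\ that $\T_{\mf{m}}$ is $\La_{\theta}$-free of rank two by Ohta — this is where $M \mid f$ and $f > 1$ enter). As $\mc{O}_{\theta}$ is unramified over $\zp$, the ring $\La_{\theta} \cong \mc{O}_{\theta}\ps{\Gamma}$ is a regular local domain, hence a UFD, whose irreducibles are, up to units, $p$ and the irreducible distinguished polynomials $g$; thus $M$ is $\La_{\theta}$-torsion free as soon as $M[q] = 0$ for $q = p$ and for each such $g$. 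Freeness makes multiplication by $q$ injective on $\T^{(\theta)}(1)$, so $M[q] \hookrightarrow H^0(\Z[\tfrac{1}{N}],(\T^{(\theta)}/q)(1))$. When $q = g$, the ring $\La_{\theta}/g$ is $\mc{O}_{\theta}$-free, so this $H^0$ is $\mc{O}_{\theta}$-free; tensoring with $Q(\mc{O}_{\theta})$ embeds it into $(\T^{(\theta)} \otimes_{\La_{\theta}} L')(1)^{G_{\Q}}$ with $L' = Q(\La_{\theta}/g)$, and this is zero because $\T^{(\theta)} \otimes_{\La_{\theta}} L'$ is a sum of irreducible two-dimensional Galois representations of cuspidal ordinary eigenforms (Hida's $\La_{\theta}$-adic Eichler-Shimura, as recalled in the proof of Lemma~\ref{Iwtorsfree}). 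Hence $M[g] = 0$, and only the case $q = p$ remains.

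That case, $q = p$, is the crux: one must show $H^0(\Z[\tfrac{1}{N}],(\T^{(\theta)}/p)(1)) = 0$, i.e.\ that the residual module $(\T^{(\theta)}/p)(1)$ has no nonzero $G_{\Q}$-fixed vector. A $G_{\Q}$-fixed vector forces a trivial subrepresentation, and the only place one could occur is the Eisenstein part; the hypotheses $p \mid f$ (resp.\ $M \mid f$ and $f > 1$, working on the Eisenstein component) are exactly what preclude this. Concretely, on the Eisenstein component one analyzes Ohta's two-step filtration of $\T_{\mf{m}}$: its graded pieces, Tate-twisted and reduced modulo $p$, are the mod-$p$ cyclotomic character $\omega$ (nontrivial as $p$ is odd) and $\omega$ times a tame character whose conductor divides $M$ and is nontrivial once $f > 1$, so neither is trivial and the $H^0$ vanishes. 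I expect this last bookkeeping — pinning down the residual characters in Ohta's filtration and confirming that the conductor hypothesis makes the relevant one nontrivial — to be the main obstacle; a secondary point is simply the need to cite the $\La_{\theta}$-freeness of $\T^{(\theta)}$, resp.\ $\T_{\mf{m}}$, from the work of Hida and Ohta (absent which one would first split off the Galois-stable $\La_{\theta}$-torsion submodule of $\T^{(\theta)}$ and check separately that it contributes no torsion to $H^1$).
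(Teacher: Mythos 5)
Your route is necessarily different from the paper's, because the paper does not prove this lemma internally at all: the first statement is quoted from \cite[Prop.~3.3.6(2)]{fk-proof}, and the second statement, in its essential case $f = M > 1$, is deferred to the forthcoming paper \cite{sh-extn}. Your reduction to the cuspidal part and your treatment of the boundary term are reasonable (modulo citing, e.g.\ from \cite{ohta-ord}, the $\La_{\theta}$-torsion-freeness of $(\tT/\T)^{(\theta)}$ and the $\La_{\theta}$-freeness of $\T^{(\theta)}$, resp.\ $\T_{\mf{m}}$), and the $\omega(a)-1$ trick for $\tT/\T$ does transpose from Lemma \ref{Iwtorsfree}. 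But the two vanishing statements $H^0(\Z[\tfrac{1}{N}],(\T^{(\theta)}/q)(1)) = 0$, which carry all the weight, are not established. Already for $q = g$ of characteristic zero your argument leans on irreducibility of $\T^{(\theta)} \otimes_{\La_{\theta}} L'$, which does not follow from Hida's theorem: that theorem concerns $\T^{(\theta)} \otimes_{\La_{\theta}} Q(\La_{\theta})$, and the specialization of the lattice at a particular height-one prime $(g)$ can be reducible (precisely at primes in the support of Eisenstein congruences), so the absence of $G_{\Q}$-invariants at such $(g)$ requires the same residual analysis as the case $q = p$, not a reference to generic irreducibility. In the two-variable Lemma \ref{Iwtorsfree} this difficulty is invisible because the extra $\La^{\iota}(1)$ factor makes the target of the relevant Hom abelian as a Galois module; that mechanism has no analogue here, which is exactly why the one-variable statement is harder.

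For $q = p$, the bookkeeping you sketch is both incomplete and not quite right. The first statement concerns the full eigenspace $\tT^{(\theta)}$, which can meet several Eisenstein maximal ideals, not only the $\mf{m}$ attached to the ideal $I$ of this paper, so one must rule out trivial Jordan--H\"older constituents of $(\tT^{(\theta)}/p)(1)$ component by component. Moreover the dangerous character is governed by $\theta\omega^{-2}$, whose conductor is $f$ and need not divide $M$; if the graded residual characters were always $\omega$ and $\omega$ times a tame character of conductor dividing $M$, nontrivial as soon as $f>1$, then torsion-freeness of $H^1(\Z[\tfrac{1}{N}],\tT^{(\theta)}(1))$ would follow whenever $f>1$, which is stronger than what the lemma asserts (the paper requires $p \mid f$ for the non-localized statement) --- a sign that the invariant-vector analysis under the stated hypotheses is exactly the nontrivial content. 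As you yourself flag, this is the main obstacle, and as written your proposal defers it rather than supplying it; it is precisely the content that the paper outsources to \cite[Prop.~3.3.6(2)]{fk-proof} and \cite{sh-extn}.
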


\begin{proof}
	The first statement is \cite[Prop.~3.3.6(2)]{fk-proof}, which implies the second statement for $f = N$. 
	The second statement is proven (in particular) in the case $f = M > 1$ in \cite{sh-extn}.
\end{proof}

Let $\Cor \colon H^1_{\Iw}(\Z_{\infty}[\tfrac{1}{N}],\tT^{(\theta)}(1))
\to H^1(\Z[\tfrac{1}{N}],\tT^{(\theta)}(1))$ denote corestriction.  Then $\Cor \circ z$ factors through the quotient 
$\tilde{\mc{H}}$ of $\La \cozp \tilde{\mc{H}}$.

\begin{theorem} \label{zsharp}
	If $f = N$ (resp., $M \mid f$ and $f > 1$), then there exists a unique 
	$\mf{H}$-linear map
	$$
		z^{\sharp} \colon \mc{H} \to H^1(\Z[\tfrac{1}{N}],\tT^{(\theta)}(1)) \quad \text{(resp., } 
		z^{\sharp} \colon \mc{H}
		\to H^1(\Z[\tfrac{1}{N}],\tT_{\mf{m}}(1)) \text{)}
	$$
	such that $(1-U_p)z^{\sharp} = \Cor \circ z$ on $\mc{H}$ and
	$$
		z^{\sharp}({}_{c,d}(u:v)) = {}_{c,d} z^{\sharp}(u:v)_{\theta} 
	$$
	for all $c$, $d$, $u$, and $v$.
\end{theorem}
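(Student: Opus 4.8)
The plan is to construct $z^{\sharp}$ on the cuspidal module $\mc{H}$ (and its Eisenstein localization) by the same mechanism used for $z$ in Theorem \ref{zinfty}: first produce a rationally-defined map out of $\La \cozp \mc{H}$ using the regulator computation of \cite{fk-proof}, then observe that it lands in the integral cohomology on the $(c,d)$-symbols, and finally invoke a torsion-freeness statement to descend to an integral map on all of $\mc{H}$. The new ingredients relative to Theorem \ref{zinfty} are that we must (a) pass from the Iwasawa cohomology over $\Z_\infty[\frac1N]$ to the single cohomology group over $\Z[\frac1N]$ via corestriction, absorbing the $(1-U_p)$ factor, and (b) replace Lemma \ref{Iwtorsfree} with Lemma \ref{torsfree} for the torsion-freeness input. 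Note that $p \mid f$ in the first case (since $f = N = Mp$) and $M \mid f$, $f > 1$ in the second, so Lemma \ref{torsfree} applies in each case.

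First I would recall from \cite[Section 3.3]{fk-proof} that there is a rationally-defined $\mf{H}$-linear map $z^{\sharp} \colon \mc{H} \to H^1(\Z[\frac1N],\tT^{(\theta)}(1)) \otimes_{\La_\theta} Q(\La_\theta)$ (respectively, with $\tT^{(\theta)}$ replaced by $\tT_{\mf{m}}$) characterized by $z^{\sharp}({}_{c,d}(u:v)) = {}_{c,d}z^{\sharp}(u:v)_\theta$, together with the identity $(1-U_p) z^{\sharp} = \Cor \circ z$, which follows from the norm-compatibility relation that corestriction from $\Z_s[\frac1N]$ to $\Z[\frac1N]$ carries ${}_{c,d}z_{r,s}(u:v)$ to $(1-U_p){}_{c,d}z_{r,0}(u:v)$. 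The factorization of $\Cor \circ z$ through the quotient $\mc{H}$ of $\La \cozp \tilde{\mc{H}}$ is exactly as noted in the line preceding the theorem statement, using that the $z^\sharp$-elements involve $[p^{r-1}u:v]_r$ with $p \nmid v$ and $N \nmid u$. Uniqueness is immediate once the map is integral: it is determined on a generating set.

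Next I would check integrality. The key point is Remark \ref{mainremark} together with Propositions \ref{ordcoh} and \ref{spancoh}: in the case $f = N$ we have $\mc{H}^{(\theta)} = \mc{C}^{(\theta)}$, and in the Eisenstein case $\mc{H}_{\mf{m}} = \mc{C}_{\mf{m}}$, so $\mc{H}$ (as a $\La_\theta$-module after localizing) is generated by the symbols ${}_{c,d}(u:v)$ with $N \nmid u$. Each ${}_{c,d}z^{\sharp}(u:v)_\theta$ lies in the integral group $H^1(\Z[\frac1N],\tT^{(\theta)}(1))$ (resp.\ its Eisenstein part) by construction in Definition \ref{zetadef}, since the underlying Beilinson element ${}_c g \cup {}_d g$ is a genuine integral class. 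Hence the image of the rational $z^\sharp$ lies in the image of $H^1(\Z[\frac1N],\tT^{(\theta)}(1)) \to H^1(\Z[\frac1N],\tT^{(\theta)}(1)) \otimes_{\La_\theta} Q(\La_\theta)$; by Lemma \ref{torsfree} this map is injective, so $z^\sharp$ factors through the integral group, giving the desired $\mf{H}$-linear map. The identity $(1-U_p)z^{\sharp} = \Cor \circ z$ and the formula on $(c,d)$-symbols persist under this factorization since they hold after tensoring with $Q(\La_\theta)$.

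The main obstacle I expect is not in this paper at all but is imported: namely the torsion-freeness of $H^1(\Z[\frac1N],\tT_{\mf{m}}(1))$ in the case $f = M > 1$, which is not available from \cite{fk-proof} and is asserted in Lemma \ref{torsfree} to be proven in \cite{sh-extn}. Granting that, the only genuine work here is the bookkeeping: verifying that the generation statements of Section \ref{homology} really do apply on the nose to the modules $\mc{H}^{(\theta)}$ and $\mc{H}_{\mf{m}}$ appearing as the source of $z^\sharp$ (including that $\alpha_{\chi,\psi}$ with $\chi$ or $h$ equal to $N$ vanish on cuspidal-at-zero parts, so that no ``$[1:p]$''-type correction term is needed beyond what already appears), and tracking the eigenspace/localization projections through the corestriction. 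These are exactly the routine verifications I would defer.
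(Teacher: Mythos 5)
Your proposal matches the paper's proof essentially step for step: cite \cite[Theorem 3.3.9]{fk-proof} for the rationally-defined $z^{\sharp}$ with the stated properties, note that the elements ${}_{c,d}z^{\sharp}(u:v)_{\theta}$ are integral and that $\mc{H}$ is generated by the ${}_{c,d}(u:v)$ with $N \nmid u$ via Propositions \ref{ordcoh} and \ref{spancoh}, and conclude by the injectivity of $H^1(\Z[\tfrac{1}{N}],\tT(1)) \to H^1(\Z[\tfrac{1}{N}],\tT(1)) \otimes_{\La_{\theta}} Q(\La_{\theta})$ from Lemma \ref{torsfree}. The approach and all key inputs (including the acknowledged reliance on \cite{sh-extn} for the $f = M > 1$ case of the torsion-freeness) are the same as in the paper.
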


\begin{proof}
	In \cite[Theorem 3.3.9]{fk-proof}, it was shown that there exists a map 
	$$
		z^{\sharp} \colon \mc{H} \to H^1(\Z[\tfrac{1}{N}],\tT(1)) \otimes_{\La_{\theta}} Q(\La_{\theta})
	$$	
	with the stated properties, equalities taking place in this group.  
	Moreover, the elements ${}_{c,d} z^{\sharp}(u:v)_{\theta}$ are all in
	$H^1(\Z[\tfrac{1}{N}],\tT(1))$ itself, and $\mc{H}$ is generated by the ${}_{c,d}(u:v)$ (with $N \nmid u$) as a 
	$\La_{\theta}$-module by Propositions \ref{ordcoh} and \ref{spancoh}.  Therefore, the image
	of $z^{\sharp}$ is contained in the image of the map
	$$
		 H^1(\Z[\tfrac{1}{N}],\tT(1)) \to H^1(\Z[\tfrac{1}{N}],\tT(1)) \otimes_{\La_{\theta}} Q(\La_{\theta}),
	$$
	which is injective by Lemma \ref{torsfree}.  The equalities then also take place in $H^1(\Z[\tfrac{1}{N}],\tT(1))$.
\end{proof}

\begin{remark}
	Let $X_1^0(N_r)_{/\Z[\frac{1}{N}]}$ denote the scheme-theoretic complement in $X_1(N_r)_{/\Z[\frac{1}{N}]}$
	of the closed $\Z[\frac{1}{N}]$-subscheme of zero cusps,
	and set
	$$
		\T^0 = \varprojlim_r H^1(X_1^0(N_r)_{/\overline{\Q}},\zp(1))^{\ord}.
	$$
	It follows from \cite[Prop.~3.2.4]{fk-proof} that the map
	$H^1(\Z[\tfrac{1}{N}],\T^0(1)) \to H^1(\Z[\tfrac{1}{N}],\tT(1))$
	is injective.  By \cite[Theorem 3.3.9(iii)]{fk-proof},
	the map $z^{\sharp}$ then actually takes values in (the $\theta$-eigenspace, or Eisenstein part, of)
	$H^1(\Z[\tfrac{1}{N}],\T^0(1))$ inside $H^1(\Z[\tfrac{1}{N}],\tT(1))$.
\end{remark}

\renewcommand{\baselinestretch}{1}


\begin{thebibliography}{McS}
\bibitem[Br]{brunault} F. Brunault, Beilinson-Kato elements in $K_2$ of modular curves, {\em Acta Arith.} {\bf 134} 
(2008), 283-298. 
\bibitem[Bu]{busuioc} C. Busuioc, The Steinberg symbol and special values of $L$-functions, {\em Trans. Amer. Math.
Soc.} {\bf 360} (2008), 5999--6015.
\bibitem[FK]{fk-proof} T. Fukaya and K. Kato, On conjectures of Sharifi, preprint, 121 pages.
\bibitem[FKS]{fks} T. Fukaya, K. Kato, R. Sharifi, Modular symbols in Iwasawa theory, {\em Iwasawa Theory 2012 - State of the Art and Recent Advances}, Contrib. Math. Comput. Sci. {\bf 7}, Springer, 2014, 177-219.
\bibitem[Hi]{hida} H. Hida, Galois representations into $\mr{GL}_2(\zp\ps{X})$ attached to ordinary cusp
forms, {\em Invent. Math.} {\bf 85} (1986), 545--613.
\bibitem[Ma]{manin} J. Manin, Parabolic points and zeta-functions of modular curves, 
{\em Math. USSR Izvsetija} {\bf 6} (1972), 19--64.
\bibitem[Me]{merel} L. Merel, Universal Fourier expansions of modular forms, {\em On Artin's conjecture for odd $2$-dimensional representations}, Lecture Notes in Math., 1585, Springer, Berlin, 1994, 59--94.
\bibitem[Oh1]{ohta-es} M. Ohta, On the $p$-adic Eichler-Shimura isomorphism for $\Lambda$-adic cusp forms,
{\em J. reine angew. Math.} {\bf 463} (1995), 49--98.
\bibitem[Oh2]{ohta-ord} M. Ohta, Ordinary \'etale cohomology groups attached to towers of elliptic modular curves. II,
{\em Math. Ann.} {\bf 318} (2000), 557--583.
\bibitem[Sh1]{sh-Lfn} R. Sharifi, A reciprocity map and the two variable $p$-adic $L$-function, {\em Ann. of Math.} {\bf 173} (2011), 251--300.
\bibitem[Sh2]{sh-extn} R. Sharifi, An extension of the method of Fukaya and Kato, in preparation.
\bibitem[Wi]{wiles} A. Wiles, The Iwasawa conjecture for totally real fields, {\em Ann. of Math.} {\bf 131} (1990),
493--540.
\end{thebibliography}
\end{document}